\DeclarePairedDelimiter{\ceil}{\lceil}{\rceil}
\newcounter{bullet}
\newtheorem{thm}{Theorem}[section]
\newtheorem{prop}[thm]{Proposition}
\newtheorem{cor}[thm]{Corollary}
\newtheorem{lem}[thm]{Lemma}
\theoremstyle{definition}
\crefname{lem}{lemma}{lemmas}
\newcommand{\go}{\omega}
\newcommand{\RR}{\mathbb{R}}
\newcommand{\Z}{\mathbb{Z}}
\newcommand{\cH}{\mathcal{H} }
\newcommand{\cL}{\mathcal{L} }
\newcommand{\cU}{\mathcal{U} }
\newcommand{\bff}{\mathbf{f}}
\newcommand{\beq}[1]{\begin{equation}\label{#1}}
\newcommand{\enq}[0]{\end{equation}}
\newcommand{\eps}{\epsilon}
\newcommand{\ep}{\varepsilon}
\newcommand{\gl}[0]{\lambda}
\newcommand{\nin}[0]{\noindent}
\newcommand{\sub}[0]{\subseteq}
\newcommand{\ra}[0]{\rightarrow}
\newcommand{\pr}[0]{\mathbb{P}}
\newcommand{\ext}[0]{\mbox{\rm{ex}}}
\newcommand{\Hom}[0]{\mbox{\rm{Hom}}}
\newcommand{\Lip}[0]{\mbox{\rm{Lip}}}
\newcommand{\dist}[0]{\mbox{\rm{dist}}}
\newcommand{\inbound}{\overline{\partial}}
\newcommand{\outbound}{\partial}
\newcommand{\floor}[1]{\left\lfloor#1\right\rfloor}
\renewcommand{\ceil}[1]{\left\lceil#1\right\rceil}
\begin{document}

\title{Lipschitz functions on weak expanders}

\author[R. A. Krueger]{Robert A. Krueger}
\address{Department of Mathematical Sciences, Carnegie Mellon University}
\email{rkrueger@andrew.cmu.edu}

\author[L. Li]{Lina Li}
\address{Department of Mathematics, Iowa State University}
\email{linali@iastate.edu}

\author[J. Park]{Jinyoung Park}
\address{Department of Mathematics, Courant Institute of Mathematical Sciences, New York University}
\email{jinyoungpark@nyu.edu}

\begin{abstract}
Given a connected finite graph $G$, an integer-valued function $f$ on $V(G)$ is called $M$-Lipschitz if the value of $f$ changes by at most $M$ along the edges of $G$. In 2013, Peled, Samotij, and Yehudayoff showed that random $M$-Lipschitz functions on graphs with sufficiently good expansion typically exhibit small fluctuations, giving sharp bounds on the typical range of such functions, assuming $M$ is not too large. We prove that the same conclusion holds under a relaxed expansion condition and for larger $M$, (partially) answering questions of Peled et al. Our techniques involve a combination of Sapozhenko's graph container methods and entropy methods from information theory.
\end{abstract}

\maketitle

\section{Introduction}\label{sec.intro}

We consider two kinds of Lipschitz functions on graphs: $\Z$-homomorphisms and (integer-valued) $M$-Lipschitz functions. Throughout the paper, $G=(V,E)$ is a connected finite graph. A \emph{$\Z$-homomorphism} from $G$ is a function $f:V(G) \ra \Z$ such that $|f(u)-f(v)|=1$ for all $\{u, v\} \in E(G)$. Note that a $\Z$-homomorphism on $G$ exists if and only if $G$ is bipartite. The simple random walk on $\Z$ (up to time $n$) is a uniform random $\Z$-homomorphism from a path of length $n$ to $\Z$ which sends an endpoint of the path to $0$. The uniform measure on the set of all $\mathbb Z$-homomorphisms from a general graph $G$ sending some fixed vertex $v_0$ to $0$ generalizes this notion, and hence is often called a \emph{$G$-indexed random walk} (e.g.~\cite{benjamini2000random,benjamini1994tree}). A natural extension of $\Z$-homomorphisms are \emph{$M$-Lipschitz functions}, integer-valued functions on $V(G)$ that satisfy $|f(u)-f(v)| \le M$ for all $\{u,v\} \in E(G)$.

Historically, of particular interest has been the (distribution of) the range of a random Lipschitz function from a given graph $G$. We define the \emph{range of $f$} to be
\[R(f):=\max_{v \in V(G)}f(v)-\min_{v \in V(G)}f(v)+1.\]
In order to make sense of a ``uniformly random'' Lipschitz function on a finite graph, it is natural to enforce a ``one-point boundary condition.'' Given $G$ and $v_0 \in V(G)$, let
\[\Hom_{v_0}(G)=\{f:V(G) \ra \Z \text{ such that $f(v_0)=0$ and $|f(u)-f(v)|=1$ $ \forall \{u,v\} \in E(G)$}\},\]
and
\[\Lip_{v_0}(G;M)=\{f:V(G) \ra \Z \text{ such that $f(v_0)=0$ and $|f(u)-f(v)|\le M$ $ \forall \{u,v\} \in E(G)$}\}.\]
Denote by $x \in_R X$ a uniformly random element $x$ of $X$. Since we assume throughout that $G$ is connected and finite, both $\Hom_{v_0}(G)$ and $\Lip_{v_0}(G;M)$ are finite. Moreover, the choice of $v_0$ is not particularly important: for any $v_0, v_1 \in V(G)$, there is a natural bijection between $\Hom_{v_0}(G)$ and $\Hom_{v_1}(G)$ (and between $\Lip_{v_0}(G;M)$ and $\Lip_{v_1}(G;M)$) using translations of $\Z$. In particular, the distribution of $R(f)$ for $f \in_R \Hom_{v_0}(G)$ (and for $f \in_R \Lip_{v_0}(G;M)$) is independent of the choice of $v_0$. 

Benjamini, H\"aggstr\"om, and Mossel~\cite{benjamini2000random} investigated the distribution of $R(f)$ for $f \in_R\Hom_{v_0}(G)$ with a motivating question of whether concentration inequalities for typical Lipschitz functions are stronger than those which hold for all Lipschitz functions. Among many results, one class of graphs that they studied was $T_{n,d}$, the $n$-level $d$-regular tree ``wired at'' the $n$-th level (meaning all leaves on the last level are connected to an additional single vertex), inspired by a branching random walk (see e.g. \cite{asmussen1983branching} or \cite{ney1991branching}) on the event that all particles occupy the same location at time $n+1$. Benjamini et al.\ showed that for $f \in_R \Hom_{v_0}(T_{n,d})$, $R(f)=\Theta_d(\log n)$ w.h.p.\footnote{with high probability --- that is, there exists $c_d$ and $C_d$ such that as $n\to \infty$, the probability that $R(f) \in [c_d \log n, C_d \log n]$ tends to $1$.} This is in surprising contrast to the maximum possible value of $R(f)$, which is trivially the diameter $n+1$ of $T_{n,d}$. (The \emph{diameter} of a graph $G$, denoted $\text{diam}(G)$, is $\max_{u,v \in V(G)} \dist_G(u,v)$, where $\dist_G(u,v)$ is the length of a shortest path between $u$ and $v$.) More generally, Benjamini and Schechtman~\cite{benjamini2000upper} showed that for any $n$-vertex $G$ and $f \in_R \Hom_{v_0}(G)$, the expectation of $R(f)$ is $O(\sqrt n)$. (A similar result for random 1-Lipschitz functions was proven by Loebl, Ne{\v{s}}et{\v{r}}il, and Reed~\cite{loebl2003note}). The conjecture that the maximum of $\mathbb E R(f)$ among all $n$-vertex graphs is achieved by a path remains open (\cite{benjamini2000upper,loebl2003note}).

Another example of the stark contrast between typical and generic $\Z$-homomorphisms is the hypercube. It was conjectured in \cite{benjamini2000random} that the range of a typical $\Z$-homomorphism on the $d$-dimensional hypercube, $Q_d$, is $o(d)$ (note that $\text{diam}(Q_d) = d$). This conjecture was proven in a very strong way by Kahn~\cite{kahn2001range}, who showed that the typical range is actually $O(1)$. Even further, Galvin~\cite{galvin2003homomorphisms} showed that, w.h.p., a random $\Z$-homomorphism on $Q_d$ takes at most 5 values. Peled~\cite{peled2017high} vastly generalized Galvin's work, obtaining results for a general class of tori including $\Z_n^d=(\Z/n\Z)^d$ (where $n$ can be large with respect to $d$) and providing strong bounds on $\mathbb P(R(f) \ge k)$ for arbitrary $k$, for both random $\Z$-homomorphisms and 1-Lipschitz functions. 

These works suggest that typical Lipschitz functions on ``highly connected'' graphs tend to have small fluctuations. Motivated by this, Peled, Samotij, and Yehudayoff~\cite{peled2013lipschitz} initiated the study of the range of typical Lipschitz functions (specifically $\mathbb{Z}$-homomorphisms and $M$-Lipschitz functions) on expander graphs (see~\eqref{def.expander} for the formal definition of expander graph). Expander graphs are historically of great importance (see e.g.~\cite{hoory2006expander}), and they form a wide class of graphs which include ``most graphs'' (that is, random graphs~\cite{friedman2003proof}). In the current context, expander graphs are natural test case for the aforementioned ``highly connected''--``small fluctuations'' paradigm. Expander graphs may be locally tree-like, suggesting that Lipschitz functions may have large ranges. However, like Bejamini et al.~\cite{benjamini2000random}, Peled et al.~\cite{peled2013lipschitz} show that the global connectivity features of expander graphs win out over these local features, causing typical Lipschitz functions to have small fluctuations. Our main result considerably weakens the expansion condition needed for the conclusions of Peled et al.

Before we precisely state our results, we draw attention to the distinction between $\Z$-homomorphisms and $1$-Lipschitz functions. Every $\Z$-homomorphism is $1$-Lipschitz, but not vice-versa; $\Lip_{v_0}(G;1)$ is usually far bigger than $\Hom_{v_0}(G)$. It is plausible that $\Z$-homomorphisms and $1$-Lipschitz functions behave very similarly with respect to their ranges and typical fluctuations, yet the following conjecture of Benjamini, Yadin, and Yehudayoff~\cite{benjamini2007random} still appears open: for every bipartite graph $G$ of maximum degree at most $d$, if $f \in_R \Hom_{v_0}(G)$ and $g \in_R \Lip_{v_0}(G;1)$, then $\mathbb E[R(f)]/\mathbb E[R(g)]=\Theta_d(1)$. Nonetheless, for expander graphs, our proof techniques that we present for $M$-Lipschitz functions almost immediately\footnote{We choose to not include the corresponding results and proofs for $\Z$-homomorphisms because doing so would require an entirely new set of definitions: $\Z$-homomorphisms only exist on bipartite graphs, thus requiring the notion of a bipartite expander graph. See~\cite{peled2013lipschitz} for a treatment of both $\Z$-homomorphisms and $M$-Lipschitz functions. We plan to revisit $\Z$-homomorphisms in future work~\cite{KLP2} discussing general homomorphisms.} apply to $\Z$-homomorphisms, and thus we will restrict our attention to the more varied setting of $M$-Lipschitz functions in the current paper. 

Consideration of $M$-Lipschitz functions is a natural bridge from $\Z$-homomorphisms to \emph{$\RR$-valued Lipschitz functions on $G$}, functions $f: V(G) \to \mathbb{R}$ satisfying $|f(u)-f(v)|\le 1$ for all $\{u,v\} \in E(G)$. Indeed, if $f \in_R \Lip_{v_0}(G;M)$, then as $M \ra \infty$, $f/M$ converges in distribution to a uniformly random $\RR$-valued Lipschitz function on $G$ with $f(v_0)=0$ (see e.g. \cite[page 8]{peled2013grounded}). While our results do not apply for $M$ unbounded by the graph $G$, we believe this is an important and interesting direction for future research --- see Section~\ref{sec:open} for further discussion.

Now we precisely state our results. Following \cite{peled2013lipschitz}, we say an $n$-vertex, $d$-regular graph $G$ is a \emph{$\lambda$-expander} if
\beq{def.expander}
\left|e(S,T)-\frac{d}{n}|S||T|\right|\le\gl \sqrt{|S||T|} \quad \text{for all $S, T \sub V(G)$} ,
\enq
where $e(S,T)$ is the number of edges with one endpoint in $S$ and the other in $T$, counted twice if both endpoints are in $S \cap T$. The definition of a $\lambda$-expander is inspired by the expander mixing lemma \cite{alon1988explicit}, which states that if $G$ is a $d$-regular graph whose eigenvalues (of its adjacency matrix) are either $d$ or bounded in absolute value by $\lambda$, then $G$ is a $\lambda$-expander. It follows from the definition that every $d$-regular graph is a $d$-expander, so it is only meaningful to consider $\gl \le d$. Furthermore, plugging in $S=\{v\}$ and $T=N(v)$ (the neighborhood of $v$, that is, the set of all vertices adjacent to $v$) yields the conclusion that $d$-regular $\lambda$-expanders must satisfy 
\beq{lambdalb}\lambda \geq \sqrt{d}(1-d/n).
\enq
Note that the smaller $\lambda$ is, the stronger the expansion given by ~\eqref{def.expander} is. Informally, we call $d$-regular $\lambda$-expanders `optimal' if $\lambda$ is on the order of $\sqrt{d}$, and we call them `weak' if $\lambda$ is on the order of $d$. We note that most $d$-regular graphs are optimal expanders, in the sense that for every $d \geq 3$ and every $\eps>0$, a random $d$-regular graph is a $(2\sqrt{d-1}+\eps)$-expander w.h.p.~\cite{friedman2003proof}.

As a first step to show the 'flatness' of typical $M$-Lipschitz functions on expanders, Peled et al.\ showed that \emph{every} $M$-Lipschitz function takes values in a set of $M+1$ consecutive integers on all but a small fraction of its vertices, even for weak expanders:
\begin{lem}[Lemma 1.1, \cite{peled2013lipschitz}]\label{lem.Lips.GS}
Let $G$ be an $n$-vertex, $d$-regular $\gl$-expander. For every $M$-Lipschitz $f$ on $G$, there exists $k \in \mathbb{Z}$ such that
\beq{def.phase}
\left|\{v \in V(G) : f(v) \not\in \{k,k+1,\dots,k+M\}\}\right|\le\frac{2\gl}{d} n .
\enq
\end{lem}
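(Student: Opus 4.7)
The plan is to apply the expander mixing condition to level sets of $f$ and exploit the $M$-Lipschitz condition to force a no-edge configuration. For each $k \in \mathbb{Z}$, set
\[
A_k := \{v \in V(G) : f(v) \leq k-1\}, \qquad B_k := \{v \in V(G) : f(v) \geq k+M+1\},
\]
so that $A_k \cup B_k$ is exactly the set of vertices lying outside the window $\{k, k+1, \dots, k+M\}$. Because $f$ is $M$-Lipschitz, no edge can join $A_k$ to $B_{k-1}$, since the $f$-values of endpoints in these two sets differ by at least $(k+M) - (k-1) = M+1 > M$. Writing $L := \lambda n/d$ and plugging $S = A_k$, $T = B_{k-1}$ into~\eqref{def.expander}, I get
\[
\frac{d}{n}\,|A_k|\,|B_{k-1}| \leq \lambda \sqrt{|A_k|\,|B_{k-1}|}, \quad \text{hence} \quad |A_k|\,|B_{k-1}| \leq L^2.
\]

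Abbreviating $a(k) := |A_k|$ and $b(k) := |B_k|$, I claim that some $k$ achieves $a(k) + b(k) \leq 2L$, which is exactly the conclusion. Note $a$ is nondecreasing and $b$ nonincreasing, both nonnegative integers, with $a(k) = 0$ for small $k$ and $b(k) = 0$ for large $k$. Suppose for contradiction that $a(k) + b(k) > 2L$ for every $k \in \mathbb{Z}$ (we may assume $L < n$, else the bound is vacuous). Let $k^*$ be the smallest integer with $a(k^*) > L$; this exists since $a$ eventually equals $n > L$. Then $a(k^*-1) \leq L$, so the contradiction hypothesis at $k = k^*-1$ yields $b(k^*-1) > 2L - a(k^*-1) \geq L$. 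Combined with the cross-term bound $a(k^*)\,b(k^*-1) \leq L^2$ derived above, this forces $a(k^*) < L^2/b(k^*-1) < L$, contradicting the defining property of $k^*$.

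The main obstacle, in my view, is realizing that the naive same-index bound $|A_k|\,|B_k| \leq L^2$ is \emph{not} sufficient on its own: monotonicity together with $a(k)\,b(k) \leq L^2$ for every $k$ does not, in general, imply $\min_k (a(k) + b(k)) \leq 2L$, because of integrality and possible large jumps of $a$ or $b$ at a single step. The extra leverage comes precisely from the \emph{cross-term} bound $a(k)\,b(k-1) \leq L^2$, which exploits the fact that the Lipschitz condition still leaves a gap of $M+1$ (rather than $M+2$) between $A_k$ and $B_{k-1}$, and this is exactly enough to rule out the borderline scenario in which $a$ and $b$ are both just above $L$ simultaneously.
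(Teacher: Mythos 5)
Your proof is correct and follows exactly the approach the paper sketches (and credits to PSY): the $M$-Lipschitz condition makes the level sets $A_k=\{f\le k-1\}$ and $B_{k-1}=\{f\ge k+M\}$ edgeless, Proposition~\ref{prop:conn} then bounds the product of their sizes by $(\lambda n/d)^2$, and a suitable choice of $k$ (your $k^*$, the threshold where $|A_k|$ first exceeds $\lambda n/d$) forces both pieces below $\lambda n/d$. One small remark: the pair the paper's sketch names, $\{f\le k\}$ and $\{f>k+M\}$, is already your ``cross-term'' pair under a shift of index, so the weaker same-index bound you warn against was never the natural first attempt; still, your explicit $k^*$-contradiction is a clean way to pin down the right $k$.
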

Roughly, the above lemma follows from the fact that the sets $\{v \in V(G) : f(v)\leq k\}$ and $\{v \in V(G) : f(v)>k+M\}$ cannot have an edge between them in $G$ (since $f$ is $M$-Lipschitz), while expanders do not have large bipartite `holes' (see Proposition~\ref{prop:conn}).

Below is our main result, which is an improvement of \cite[Corollary 1.3]{peled2013lipschitz}. For the rest of this section, logarithms are of base 2.

\begin{thm}\label{thm.flat}
There exist universal constants $c,C,c',C'>0$ such that the following holds. Let $M \in \mathbb Z^+$, and let $G$ be a connected $n$-vertex, $d$-regular $\gl$-expander with $\lambda \leq d/5 \leq cn$, where
\beq{eq.Mld}
M \leq \min\left\{ c \frac{d^{3/2}}{\lambda\log d}, (\log n)^C \right\}.
\enq
Fix $v_0 \in V(G)$, and let $f$ be chosen uniformly at random from $\Lip_{v_0}(G;M)$. Then
\[ \pr\left( R(f) \geq C'M \frac{\log\log(n)}{\log(d/\lambda)} + 2(M+1) \right) \leq n^{-c'} .\]
Furthermore, for every $v \in V(G)$, we have $\mathrm{Var}(f(v)) = O\left((M\ceil{\log(M)/\log(d/2\lambda)})^2\right)$ where the implicit constant is universal.
\end{thm}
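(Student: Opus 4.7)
The plan is to adapt the entropy-and-containers framework of Peled-Samotij-Yehudayoff~\cite{peled2013lipschitz} to the weaker expansion hypothesis, by strengthening the Sapozhenko-style enumeration of cutsets. First, invoke Lemma~\ref{lem.Lips.GS} to attach to each $f \in \Lip_{v_0}(G;M)$ a \emph{phase} $k=k(f)\in\Z$ such that $f(v)\in[k,k+M]$ on all but at most $2\lambda n/d$ vertices. Since $f(v_0)=0$, we have $k\in\{-M,\ldots,0\}$, so a union bound over the at most $M+1$ possible phases costs only a $\mathrm{polylog}(n)$ factor, absorbed by the final bound. Fix a phase $k$, and for each integer $t\ge 1$ define the level sets
\[
A_t^+(f)=\{v:f(v)\ge k+M+t\},\qquad A_t^-(f)=\{v:f(v)\le k-t\}.
\]
The theorem reduces to showing that $A_t^\pm(f)=\emptyset$ with probability $\ge 1-n^{-c'}$ once $t\ge C'M\log\log n/\log(d/\lambda)$, together with corresponding tail decay for the variance.

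\textbf{Level sets are non-expanding and a container bound.} Because $f$ is $M$-Lipschitz, there is no edge between $A_t^+$ and $\{v:f(v)\le k+t-1\}$; by Lemma~\ref{lem.Lips.GS} the latter set contains $(1-O(\lambda/d))n$ vertices once $t\le M$, so $A_t^+\cup N(A_t^+)$ lives in the ``thin band'' $\{k+t\le f\le k+M+t-1\}$ of size at most $|A_t^+|+2\lambda n/d$. Thus each $A_t^+$ is a set with a very large bipartite hole, exactly the kind of object controlled by graph containers. The key technical lemma to prove is a Sapozhenko-type enumeration: the number of $A\sub V(G)$ with $|A|=s$ and $|A\cup N(A)|\le s+2\lambda n/d$ is at most $\exp\bigl(O(s\log d)\bigr)\cdot\binom{n}{\le 2\lambda n/d}$. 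This will be established via a closure (``2-linking'') procedure: approximate each $A$ by a skeleton $S\supseteq A$ with $|S\setminus A|=O(\lambda|A|/d)$, using that expansion forces most vertices of $N(A)$ to have many neighbors in $A$, and then enumerate the residual fuzz.

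\textbf{Surgery, iteration, and variance.} Given the container bound, perform the surgery $\hat f(v):=\min(f(v),k+M+t-1)$, obtaining an element of $\Lip_{v_0}(G;M)$ that agrees with $f$ off $A_t^+$. Reversing the surgery requires specifying the set $A_t^+$ and, on each of its vertices, the ``excess'' of $f$ above the cap, encoded in at most $O(|A_t^+|\log M)$ bits. Weighing the container entropy $O(|A_t^+|\log d + (\lambda n/d)\log(d/\lambda))$ against the number of feasible extensions yields $\pr(A_t^+(f)\neq\emptyset)\le\exp(-\Omega(t/M)\cdot\log(d/\lambda))$ after iterating the truncation level by level; plugging in $t\asymp M\log\log n/\log(d/\lambda)$ drives the tail below $n^{-c'}$. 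The variance bound follows by writing $\mathrm{Var}(f(v))\le\sum_{j\ge1}(2j-1)\pr(|f(v)-k|\ge j)$ and cutting the sum at $j\asymp M\log M/\log(d/2\lambda)$, using the level-set decay for larger $j$.

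\textbf{Main obstacle.} The sharpest technical point is the container enumeration in the weak-expansion regime $\lambda\le d/5$. The PSY argument effectively uses $\lambda\ll\sqrt d$ to produce approximations with very small error; here we must allow $|S\setminus A|$ of order $\lambda|A|/d$ and compensate by iterating the closure step several times. Balancing the container cost ($O(\log d)$ per skeleton vertex) against the surgery gain ($\Omega(\log(d/\lambda))$ per cut edge), while keeping the union-bound losses manageable, is precisely what pins down the quantitative condition $M\le cd^{3/2}/(\lambda\log d)$ in~\eqref{eq.Mld}; the auxiliary polylog bound on $M$ accommodates the union bound over phases and levels needed in the iteration.
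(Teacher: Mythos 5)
Your high-level plan (split by phase, study level sets of flaws, count flaws via containers, perform a truncation surgery, deduce variance from tail bounds) has the same rough shape as the PSY argument, but the three improvements that actually make the theorem true at $\lambda\le d/5$ with $M$ up to $d^{3/2}/(\lambda\log d)$ are absent, and some of your quantitative claims are off in ways that cannot be repaired by bookkeeping.

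\textbf{The tail decay you claim is the wrong order.} You assert $\pr(A_t^+\neq\emptyset)\le\exp(-\Omega(t/M)\log(d/\lambda))$ and then ``plug in $t\asymp M\log\log n/\log(d/\lambda)$.'' That substitution gives $\exp(-\Omega(\log\log n))=(\log n)^{-\Omega(1)}$, which is far larger than $n^{-c'}$. The actual mechanism (Theorem~\ref{MT.nonbip}) yields a \emph{doubly} exponential tail $\exp_2(-|B(v,t-1)|/5M)$, and the thing that produces it is the observation you never make: if $f(v_0)\ge k+tM+2$, then by $M$-Lipschitz-ness the flaw component through $v_0$ must swallow the whole ball $B(v_0,t-2)$, whose size grows like $(d/2\lambda)^{2(t-2)}$ by Proposition~\ref{prop.volume}. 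Without this ball-containment step, the per-level penalty only compounds linearly in $t$ and can never beat $\mathrm{polylog}(n)$.

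\textbf{The container bound as stated is vacuous.} You write that the number of $A$ with $|A|=s$ and $|A\cup N(A)|\le s+2\lambda n/d$ is $\exp(O(s\log d))\cdot\binom{n}{\le 2\lambda n/d}$. The second factor is $\exp(\Theta(n))$ for $\lambda=\Theta(d)$ and cannot be offset by any surgery penalty on a flaw of size $s\ll n$. More fundamentally, no direct enumeration of a flaw that may be as large as $2\lambda n/d$ can work here: a flaw of that size only enjoys one-sided expansion $|A|\lesssim(\lambda/d)|\partial A|$, and balancing $O(\log d)$ bits of container cost per flaw vertex against a $\Theta(1/M)$ bit gain per boundary vertex then forces precisely the PSY condition $\lambda\lesssim d/(M\log d)$, not $\lambda\le d/5$. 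The paper's way around this is the two-layer decomposition: it \emph{conditions} on the outer flaw $A(f)$ (through the events $\cL_p$) rather than enumerating it, and only applies containers to the inner flaw $B(f)$, whose level-$2M$ depth forces $B(f)^+\subseteq A(f)$ and hence $|B(f)|\lesssim(\lambda/d)^2|N(B(f))|$ with the stronger quadratic expansion. Your single-level-set approach does not have this escape.

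\textbf{Some of the gains you balance are also wrong.} The surgery gain is $\Theta(1/M)$ per flaw-boundary vertex, coming from $\log(1-\tfrac{1}{M+1})\le -1/M$; it is not $\Omega(\log(d/\lambda))$ per cut edge. This matters for where the trade-off lands. Finally, to reach the stated $M$-range one also needs the Shearer/entropy refinement (Lemma~\ref{lem:Sh}) which replaces the naive $(2M+1)$-per-vertex count with an averaged $(M+1)$-per-vertex count; without that you lose the factor of two in the exponent and again cannot cover the full parameter regime. In short: the skeleton of the argument (phase union bound, containers, surgery, truncating the variance series) is recognizable, but the ball-containment, the two-layer flaw conditioning, and the entropy averaging are each load-bearing and none appears in the proposal.
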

\nin By adjusting the constants, the above theorem is vacuous if $d$ or $n$ are small, so we assume that $d$ and $n$ are large. For the future reference, we record that the assumption that $d/5 \le cn$ together with~\eqref{lambdalb} yields
\beq{lambdalb1} \gl \ge \sqrt d/2.\enq

Peled et al.~\cite{peled2013lipschitz} proved Theorem~\ref{thm.flat} under the assumption
\beq{PSYcond} \lambda \leq \frac{d}{32(M+1)\log(9Md^2)}, \enq
which, in particular, requires that $\lambda = O(d/\log(d))$. 
Peled et al.\ asked how tight is~\eqref{PSYcond} --- in particular, do their results continue to hold when $\lambda$ is slightly less than $d$, or for larger $M$? We partially answer these questions with \Cref{thm.flat}: the conclusion holds even for weak expanders (that is, $d/\gl=O(1)$), and furthermore, for optimal expanders (that is, $\lambda = \Theta(\sqrt{d})$), we allow $M$ to be as big as $d/\log (d)$ (in~\cite{peled2013lipschitz}, $M = O(\sqrt{d}/\log d)$).

The $M \log\log(n)$ in \Cref{thm.flat} is best possible, that is to say, typical $M$-Lipschitz functions on expanders (subject to~\eqref{eq.Mld}) have range $\Theta_d(M \log\log(n))$. We explain this briefly here. Since $d$-regular $\lambda$-expanders have vertex expansion at least roughly $(d/\lambda)^2$ (meaning that for ``not-too-large'' sets $A$ of vertices, $|N(A)| \gtrsim (d/\lambda)^2 |A|$), there are at least roughly $(d/\lambda)^{2t}$ vertices within distance $t$ of any given vertex, if $t$ is not too large. This implies that the diameter of an $n$-vertex, $d$-regular $\lambda$-expander is $\Theta_{d,\lambda}(\log n)$, so \Cref{thm.flat} indeed shows that typical Lipschitz functions have range much smaller than the maximum possible range. It was noted in~\cite{peled2013lipschitz} that one may apply the techniques of Benjamini, Yadin, and Yehudayoff \cite{benjamini2007random} to obtain a matching lower bound on the range of $f$, that is, $\mathbb E[R(f)] \ge O_{d,\gl}(M\log \log n)$, if $M$ is relatively smaller than $n$. Roughly, the idea is that we can greedily find $\Theta(n/\log n)$ many pairwise disjoint balls of radius $\Theta(\log\log n)$ (which must have $\Theta(\log n)$ vertices) and then on at least one of them the function is likely to take an extreme value. We do not explore this further.

The uniform variance bound in Theorem~\ref{thm.flat} is obtained without much extra work. Such a bound is indicative of a ``localization'' phenomenon common for ``height functions'' (of which Lipschitz functions are an example) on high-dimensional graphs. This is in contrast to, for example, $M$-Lipschitz functions on the $n$-vertex path, where the variance grows with the distance from the boundary condition. See for example~\cite{chandgotia2021delocalization, lammers2024height, milos2015delocalization, peled2017high, peled2014random} for other models where this ``localization-delocalization'' phenomenon is present.

In our efforts to prove \Cref{thm.flat}, it turns out that rather than using a classical \emph{one-point boundary condition} for our Lipschitz functions (that is, setting $f(v_0)=0$), it is more convenient, and (in our opinion) also natural, to study an alternative conditioning, which we introduce as the \emph{fixed ground state condition}. Given $G$ an $n$-vertex, $d$-regular $\gl$-expander, denote by $\Lip(G;M)$ the (infinite) collection of all $M$-Lipschitz functions on $G$ (without the boundary condition $f(v_0)=0$). Lemma~\ref{lem.Lips.GS} establishes as a \emph{ground state} of $f \in \Lip(G;M)$ to be an interval $\{k,\dots,k+M\}$ that satisfies \eqref{def.phase}: it is helpful to think of $f \in \Lip(G;M)$ as mostly residing in $\{k,\dots,k+M\}$ for such a $k \in \mathbb{Z}$, with rare fluctuations outside of that range. Let
\[ \Lip^*_k(G;M) = \left\{ f \in \Lip(G;M) : \left|\left\{ v \in V(G) : f(v) \not\in \{k,k+1,\dots,k+M\} \right\}\right| \leq \frac{2\lambda}{d} n \right\} \]
be the collection of all Lipschitz functions with ground state $\{k, \ldots, k+M\}$. Note that $f \in \Lip(G;M)$ can be contained in multiple different $\Lip_k^*(G;M)$. For $v \in V(G)$ and $t \geq 0$, we denote by $B(v,t)$ the \emph{ball of radius $t$ centered at $v$}, that is, $B(v,t) = \{u \in V(G) : \dist_G(u,v) \leq t\}$. The following is our main technical result, which we use to prove Theorem~\ref{thm.flat}.
\begin{thm}\label{MT.nonbip}
There exists a universal constants $c,C>0$ such that the following holds.
Let $M \in \mathbb Z^+$, and let $G$ be a connected $n$-vertex $d$-regular $\gl$-expander with $\lambda \leq d/5 \leq cn$, where \eqref{eq.Mld} holds.
Fix $k \in \mathbb{Z}$, and let $f$ be chosen uniformly at random from $\Lip^*_k(G;M)$. Then for every integer $t \geq 2$ and every $v \in V(G)$,
\[ \pr( f(v) > k + tM+1) \le \exp_2\left(-\frac{|B(v,t-1)|}{5M}\right) .\]
\end{thm}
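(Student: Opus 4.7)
After translating so $k = 0$, the goal is to show $|\{f \in \Lip^*_0(G;M) : f(v) > tM+1\}| \le |\Lip^*_0(G;M)| \cdot 2^{-|B(v,t-1)|/(5M)}$. The opening observation is Lipschitz propagation: if $f(v) > tM+1$ then for every $u \in B(v,t-1)$, $f(u) \ge f(v) - M(t-1) > M+1$, so the super-level set $A(f) := \{u \in V(G) : f(u) > M\}$ contains $B(v,t-1)$. It therefore suffices to show that for every $S \subseteq V(G)$, $|\{f \in \Lip^*_0(G;M) : A(f) \supseteq S\}| \le |\Lip^*_0(G;M)| \cdot 2^{-|S|/(5M)}$, and apply this with $S = B(v,t-1)$. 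A useful a priori bound is $|A(f)| \le 2\lambda n/d$ for any $f \in \Lip^*_0(G;M)$, so only relatively small $A$'s are in play.

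The central move is a shift-based injection $\{f : A(f) \supseteq S\} \times \cD \hookrightarrow \Lip^*_0(G;M)$ with $|\cD| \ge 2^{|S|/(5M)}$. The primitive shift is $\Psi_{a,A}(f) := f - a \cdot \mathbf{1}_A$, where $A$ is an exact super-level set of $f$ at some threshold $c \ge M$ and $a \in \{0, 1, \ldots, M+1\}$; one checks that cross-boundary differences, originally in $[1,M]$, become $[1-a, M-a] \subseteq [-M, M]$, so $\Psi_{a,A}(f)$ is $M$-Lipschitz, and the excess set $\{u : f(u) \notin \{0, \ldots, M\}\}$ only shrinks, so $\Psi_{a,A}(f) \in \Lip^*_0(G;M)$. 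Independent shifts on the nested level sets $A_j := \{u : f(u) > jM\}$ for $j \ge 1$ can then be composed (in order $j=1,2,\ldots$), each by an independent amount in $\{0,\ldots,M+1\}$, producing up to $(M+2)^J$ decorations when $J$ of the $A_j$'s are non-empty. To turn this into a genuine injection, one must further show that the decoration tuple together with the output recovers the original $f$, which boils down to a Sapozhenko-style container bound that limits the number of valid $A_j$-sequences.

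The container analysis is the technical core and the main obstacle. By the expander mixing lemma together with the hypothesis $\lambda \le d/5$, any $A$ with $|A| \le 2\lambda n/d$ has edge boundary $|\partial A| \ge (3d/5)|A| - \lambda\sqrt{|A|n}$, so $|\partial A| = \Omega(d|A|)$ whenever $|A|$ is not too small, even in the weak-expansion regime $d/\lambda = \Theta(1)$. Sapozhenko's two-phase method then represents $A$ via a small ``core'' $T$ (a dominating subset of $\partial A \cap A$ of size $O(|A|(\log d)/d)$) plus local corrections within the vertex neighborhood of $T$, giving at most $2^{O(|A|(\log d)/d)}$ possible $A$'s of each size. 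Under (\ref{eq.Mld}), the bound $M \le c d^{3/2}/(\lambda \log d)$ is precisely what ensures $(\log d)/d \le 1/(5M)$, yielding the required container entropy $|A|/(5M)$. The main obstacle lies in making this container argument go through in the weak-expansion regime $d/\lambda = \Theta(1)$, where the coarse approximation step in the classical Sapozhenko method is comparatively uninformative and must be supplemented with entropy techniques from information theory — chain-rule/conditional-entropy arguments on the joint distribution of $A$ and its boundary vertices that extract the correct $(\log d)/d$ rate per boundary vertex. Assembling the shift decorations with the container bound then produces the claimed exponential decay in $|\{f : A(f) \supseteq S\}|/|\Lip^*_0(G;M)|$.
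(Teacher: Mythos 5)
Your opening reduction (shifting $k$ to $0$, noting that $f(v)>tM+1$ forces $f>M+1$ on $B(v,t-1)$, and restricting attention to small super-level sets by Lemma~\ref{lem.Lips.GS}) is fine. The structural resemblance to the paper ends there, and the central move of your proposal — the shift-based injection — does not work, for a quantitative reason that no amount of container bookkeeping can repair.

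The decoration set you build is far too small. Your $\mathcal{D}$ consists of tuples $(a_1,\dots,a_J)\in\{0,\dots,M+1\}^J$, one coordinate per non-empty nested level set $A_j=\{u:f(u)>jM\}$, so $|\mathcal{D}|=(M+2)^{J}$. But $J$ is controlled by the \emph{range} of $f$: $J\le \max_u f(u)/M$, and for a typical $f$ in the event $\{A(f)\supseteq S\}$ the range is $O(M)$, so $J$ is bounded and $|\mathcal{D}|$ is polynomial in $M$. On the other hand you need $|\mathcal{D}|\ge 2^{|S|/(5M)}$ with $S=B(v,t-1)$; by Proposition~\ref{prop.volume}, $|B(v,t-1)|\gtrsim (d/2\lambda)^{2(t-1)}$ grows exponentially in $t$, so the required decoration count is doubly exponential in $t$. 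Even after restricting to $f$ with $f(v)>tM+1$, which forces $J\ge t$, you get at best $(M+2)^{t}$, which is nowhere near $\exp_2\bigl((d/2\lambda)^{2(t-1)}/(5M)\bigr)$. The correct penalty has to come from the \emph{boundary} of the flaw, of size $\Theta(d\,|B(f)|)\gtrsim |B(v,t-1)|$, not from the number of nested level sets; your shift decoration is blind to that boundary. (There is a second, separate gap: injectivity of the composed shifts is asserted but not established — the level sets $A_j$ are data of $f$, not of the output, and recovering them from the output plus $(a_1,\dots,a_J)$ is precisely the hard part — but this is moot given the counting mismatch.)

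The paper's actual argument does not use an injection at all. It exploits a \emph{two-fold} flaw structure: $A(f)$ is the $2$-linked component at level $\ge M+1$ containing the vertex, and $B(f)$ is the $4$-linked component at level $\ge 2M+2$, with $B(f)^+\subseteq A(f)$; the nesting is what guarantees $B(f)$ is small enough to enjoy the strong vertex-expansion $|B(f)|\lesssim(\lambda/d)^2|\outbound(B(f))|$, which the full super-level set (or $A(f)$ alone) does not, since it may have size up to $2\lambda n/d$. The proof then conditions on $A(f)$ and all values of $f$ outside of $A(f)$ (and a second time outside $S^+$ for a $\psi$-approximating pair $(S,F)$ of $B(f)$), and compares $|\cL_{p,p'}(g,S,F)|$ against an explicit product lower bound (the ``benchmark'') for $|\cL_{p,p'}|$. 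The upper bound on $|\cL_{p,p'}(g,S,F)|$ is obtained by applying Shearer's Lemma to the uniform random $\bff$ on $S^+$ under a carefully chosen order and $d$-fold cover; the boundary penalty enters as a $-1/M$ per edge meeting $N(B(\bff))\cap\outbound(S)$, and the container cost \eqref{eq.containercost} is absorbed because of the hypothesis~\eqref{eq.Mld}. Your closing paragraph correctly names the ingredients (containers in the weak-expansion regime, entropy) but they are wired together in a fundamentally different way than the shift framework you describe, and the shift framework itself cannot produce the stated decay.
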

As noted before, in expander graphs, $|B(v,t)|$ is exponential in $t$, so the tail probability in \Cref{MT.nonbip} is double exponentially small. We do not try to optimize the constants; the bound $\lambda \leq d/5$ is not crucial, but we cannot take $\lambda$ arbitrarily close to $d$; see Section~\ref{sec:open}. Note that by symmetry, \Cref{MT.nonbip} applies equally well to fluctuations below the ground state. With an application of Bayes' rule, we can modify the distribution in Theorem~\ref{MT.nonbip} from $\Lip^*_k(G;M)$ (fixed ground state) to $\Lip_{v_0}(G;M)$ (fixed one-point boundary condition) with some negligible factors. In particular, this gives bounds on the probability that  $f \in_R \Lip_{v_0}(G;M)$ satisfies~\eqref{def.phase} for a particular $k$; see \Cref{sec.GS}.

The two main ingredients of our proof are entropy techniques and graph containers (see \Cref{pf.overview} for a proof overview), both of which have proven useful in asymptotic enumeration problems and statistical physics problems (e.g.~\cite{galvin2004phase, peled2023rigidity}). In particular, we prove a version of Sapozhenko's graph container lemma~\cite{sapozhenko1987number} with improved dependency on parameters for expander graphs which we believe is of independent interest; see \Cref{sec:containers}. For recent work using this container method which is related to our approach here, see e.g.~\cite{jenssen2023homomorphisms, kahn2020number, kahn2022number, li2023number}; see also~\cite{kleitman1982number,sapozhenko1987number} for some early applications of containers. Our integration of entropy and containers is inspired by \cite{li2023number}, which studied colorings on the middle two layers of the hypercube, but there are numerous technical differences and additional obstacles to overcome.

The rest of the paper is organized as follows. In Section~\ref{sec:prelim}, we fix our notation and give preliminary results about expanders and entropy. In Section~\ref{sec:containers}, we establish the containers that we use in our proof of Theorem~\ref{MT.nonbip}, which is given in Section~\ref{sec:main}. An overview of the proof of Theorem~\ref{MT.nonbip} is given in Section~\ref{pf.overview}. In Section~\ref{sec.GS}, we use Theorem~\ref{MT.nonbip} to prove Theorem~\ref{thm.flat}. Finally, in Section~\ref{sec:open}, we discuss future work and open problems.

\section{Notation and preliminaries}\label{sec:prelim}

We summarize some notation and preliminaries here. In Section~\ref{sec:prelim:notation}, we give our notation and a few standard lemmas; Section~\ref{sec:prelim:expanders} provides some basic propositions regarding $\lambda$-expanders; and Section~\ref{sec:prelim:entropy} collects basic properties of the binary entropy function of discrete random variables and a formulation of Shearer's Lemma.

\subsection{Notation and terminology}\label{sec:prelim:notation}

We use $\log$ for $\log_2$, $\ln$ for $\log_e$, and $\exp_2(x)$ for $2^x$. For integers $a,b$ with $a \leq b$, we write $[a,b]$ for $\{a,a+1,\dots,b-1,b\}$, and $[a]$ for $\{1,\dots,a\}$ if $a$ is positive.

Throughout $G$ is a connected simple $d$-regular $n$-vertex graph.
For $X \sub V(G)$, we denote the \emph{neighborhood} of $X$ by $N(X) := \{v \in V(G): \exists u \in X \text{ such that } uv \in E(G)\}$; when $X = \{v\}$, we simply write $N(v)$.
For $Y\subseteq V(G)$, we let $d_Y(v):=|Y\cap N(v)|$; when $Y = V(G)$, we simply write $d(v) = |N(v)|$ for the \emph{degree} of $v$.
The \emph{ball of radius $t$ about $X$}, denoted by $B(X,t)$, is $\{v \in V(G) : \exists u \in X \text{ such that } \dist_G(u,v) \leq t\}$; when $X = \{v\}$, we simply write $B(v,t)$.
For convenience, we let $X^+ := B(X,1) = X \cup N(X)$. The \emph{outer boundary} of $X$, denoted by $\outbound(X)$, is $N(X) \setminus X$, while the \emph{inner boundary} of $X$, denoted by $\inbound(X)$, is $\outbound(V(G)\setminus X)$. The \emph{interior} of $X$, denoted by $X^\circ$, is $X \setminus \inbound(X) = V(G) \setminus (V(G)\setminus X)^+$.

For $X \subseteq V(G)$, $G[X]$ denotes the subgraph of $G$ induced on $X$. For $k \in \mathbb Z^+$, we use $G^k$ for the \emph{$k^{\text{th}}$ power of $G$}, which is the graph on $V(G)$ where two distinct vertices are adjacent if and only if their distance in $G$ is at most $k$. We say that $X \subseteq V(G)$ is \emph{$k$-linked in $G$} if $G^k[X]$ is connected. When the underlying graph is clear, we simply say that $X$ is $k$-linked. 
For any $Y\subseteq V(G)$, we call the maximal $k$-linked subsets of $Y$ the \emph{$k$-linked components of $Y$}.

For $X, Y \subseteq V(G)$, we say that $Y$ \emph{covers} $X$ if $X \subseteq Y^+$. If $X$ covers $Y$ and $Y$ covers $X$, we say that $Y$ \emph{mutually covers} $X$. In the following proposition, we observe that linked-ness transfers to mutual covers, omitting the proof.

\begin{prop}\label{prop:cover}
Let $X, Y$ be sets of vertices of a graph $G$ such that $Y$ mutually covers $X$. If $X$ is $k$-linked, then $Y$ is $(k+2)$-linked.
\end{prop}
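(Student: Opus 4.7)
The plan is to show that any two vertices of $Y$ can be joined by a sequence in $Y$ with consecutive distances at most $k+2$, by ``lifting'' a $k$-linked sequence in $X$ to $Y$ using the mutual cover property and the triangle inequality.

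First, I would fix $y, y' \in Y$ and use the hypothesis $Y \subseteq X^+$ to pick $x_0, x_m \in X$ with $\dist_G(y, x_0) \le 1$ and $\dist_G(y', x_m) \le 1$ (such vertices exist since $y, y' \in X \cup N(X)$; if $y \in X$ we may take $x_0 = y$, similarly for $y'$). Since $X$ is $k$-linked, there is a sequence $x_0, x_1, \dots, x_m$ of vertices of $X$ with $\dist_G(x_i, x_{i+1}) \le k$ for $0 \le i \le m-1$.

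Next, I would lift this sequence back to $Y$. Using $X \subseteq Y^+$, for each $1 \le i \le m-1$ choose $y_i \in Y$ with $\dist_G(x_i, y_i) \le 1$; set $y_0 := y$ and $y_m := y'$. By the triangle inequality,
\[ \dist_G(y_i, y_{i+1}) \le \dist_G(y_i, x_i) + \dist_G(x_i, x_{i+1}) + \dist_G(x_{i+1}, y_{i+1}) \le 1 + k + 1 = k+2, \]
for each $0 \le i \le m-1$. Thus $y_0, \dots, y_m$ is a walk in $G^{k+2}[Y]$ from $y$ to $y'$, so $Y$ is $(k+2)$-linked.

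There is essentially no obstacle here --- the only mild subtlety is to ensure the endpoints of the lifted sequence are exactly $y$ and $y'$ rather than arbitrary witnesses near $x_0, x_m$, which is handled by choosing $x_0, x_m$ as witnesses to $y, y' \in X^+$ and then reusing $y, y'$ themselves as the corresponding lifts. The $+2$ in the conclusion is tight for this argument, reflecting the one-step slack on each end when traversing between $X$ and $Y$.
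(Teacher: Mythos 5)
Your proof is correct in substance and is the natural argument, so since the paper explicitly omits its own proof there is nothing to compare against; the lifting-by-triangle-inequality idea is exactly what one would write. One small edge case deserves a patch: you set $y_0 := y$ and $y_m := y'$, but if the witnesses satisfy $x_0 = x_m$ (so the path in $G^k[X]$ is trivial, $m=0$), this forces $y = y'$, which need not hold. That case is handled directly by noting $\dist_G(y,y') \le \dist_G(y,x_0) + \dist_G(x_0,y') \le 2 \le k+2$; equivalently, you could avoid the case split by keeping $y$ and $y'$ as extra terms outside the lifted sequence, i.e.\ use $y, y_0, y_1, \dots, y_m, y'$ with $y_i \in Y$ covering $x_i$ for every $0 \le i \le m$, so that the first and last gaps have length at most $2$ and all middle gaps have length at most $k+2$.
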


We also need a standard lemma bounding the number of (rooted) $k$-linked sets of a graph. The following is a consequence of the fact that the infinite $\Delta$-branching rooted tree contains precisely
\[\frac{\binom{\Delta m}{m}}{(\Delta-1)m+1} \le (e\Delta)^{m-1}\]
rooted subtrees with $m$ vertices (see for example \cite[p.\ 396, Ex.11]{knuth2005art}). 
\begin{lem}\label{lem:numlinkset}
If $G$ is a graph with maximum degree $\Delta$, then the number of $m$-vertex
subsets of $V(G)$ which contain a fixed vertex and induce a connected subgraph is at most $(e\Delta)^{m-1}$.
\end{lem}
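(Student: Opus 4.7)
My plan is to reduce to the counting bound stated in the lemma's preamble by constructing an injection from the family of connected $m$-subsets of $V(G)$ containing the fixed vertex $v_0$ into the set of $m$-vertex rooted subtrees of the infinite $\Delta$-branching rooted tree $\mathcal{T}_\Delta$.

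To set this up, for each $u \in V(G)$ I fix, once and for all, an arbitrary labelling of the (at most $\Delta$) edges incident to $u$ by distinct elements of $[\Delta]$. Given a connected $m$-subset $S \ni v_0$, I run a depth-first search on $G[S]$ starting at $v_0$, at each step advancing along the incident edge with the smallest $u$-label among those leading to a vertex of $S$ not yet visited. The resulting DFS tree is a spanning tree of $G[S]$ with a canonical plane structure: at each visited vertex $u$, its children in the tree are placed in the slots of $\{1,\dots,\Delta\}$ indexed by the labels at $u$ of the edges used to reach them. Sending $v_0$ to the root then realises this as a rooted subtree $\varphi(S) \subseteq \mathcal{T}_\Delta$ with exactly $m$ vertices.

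The crucial observation, and the main point to verify, is that $\varphi$ is injective: from $\varphi(S)$ alone one can replay the DFS directly on $G$, without needing to know $S$ in advance. Indeed, starting at $v_0$, the ordered child-slots occupied in $\varphi(S)$ tell us in which order to follow which edges out of the current vertex, and because the labellings were fixed globally on $V(G)$ rather than on $S$, a label at a vertex $u$ pinpoints a unique neighbour of $u$ in $G$; moving to that neighbour yields the next vertex of the traversal. Iterating recovers $S$ entirely. Combined with the enumeration $\binom{\Delta m}{m}/((\Delta-1)m+1) \le (e\Delta)^{m-1}$ of $m$-vertex rooted subtrees of $\mathcal{T}_\Delta$ stated in the preamble (the upper bound being a routine application of $\binom{a}{b} \le (ea/b)^b$), this gives the claim.
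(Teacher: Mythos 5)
Your proof is correct and fills in exactly the reduction the paper leaves implicit: the paper cites the enumeration of $m$-vertex rooted subtrees of the infinite $\Delta$-branching tree and declares the lemma ``a consequence,'' and you supply the intended injection by fixing a local $[\Delta]$-labelling of the edges at each vertex, embedding a canonical DFS spanning tree of $G[S]$ into $\mathcal{T}_\Delta$ via those labels, and observing that the global labelling lets one reconstruct $S$ from its image, so the map is injective. One small caveat on your parenthetical: applying $\binom{a}{b}\le(ea/b)^b$ to get $\binom{\Delta m}{m}\le(e\Delta)^m$ and then dividing by $(\Delta-1)m+1$ does not by itself give $(e\Delta)^{m-1}$ when $m$ is small (e.g.\ $m=2$, $\Delta=3$); a clean way is to write $\binom{\Delta m}{m}/((\Delta-1)m+1)=\prod_{i=2}^m((\Delta-1)m+i)/m!\le(\Delta m)^{m-1}(e/m)^m\le(e\Delta)^{m-1}$ for $m\ge 3$ and check $m\le 2$ directly --- though the paper simply takes this numerical inequality as given.
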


\subsection{Properties of expanders}\label{sec:prelim:expanders}

Recall from \eqref{def.expander} the definition of a $\gl$-expander. In this section, we collect some basic propositions regarding $\gl$-expanders. We refer the readers to \cite{peled2013lipschitz} for omitted proofs.

\begin{prop}[Connectivity]\label{prop:conn}
Let $G$ be an $n$-vertex $d$-regular $\lambda$-expander. For every $A, B \subseteq V(G)$ satisfying $|A||B| > \left( \frac{\lambda n}{d} \right)^2$, there is an edge joining $A$ and $B$.
\end{prop}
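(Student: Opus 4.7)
The plan is to derive this directly from the defining inequality \eqref{def.expander} of a $\lambda$-expander, applied with $S = A$ and $T = B$. Rearranging the absolute value bound gives the one-sided estimate
\[ e(A,B) \ \geq \ \frac{d}{n}|A||B| \ - \ \lambda \sqrt{|A||B|} \, . \]
So to conclude that $e(A,B) > 0$, it suffices to show that the right-hand side is strictly positive under the hypothesis $|A||B| > (\lambda n / d)^2$.

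To that end, I would observe that $|A||B| > (\lambda n/d)^2$ is equivalent to $\sqrt{|A||B|} > \lambda n / d$, which rearranges to $\lambda \sqrt{|A||B|} < \frac{d}{n}|A||B|$. Substituting this into the previous display gives $e(A,B) > 0$, and hence there is at least one edge with one endpoint in $A$ and one in $B$.

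There is no real obstacle here — the proposition is essentially a one-line unpacking of the $\lambda$-expander definition, recorded for convenience later in the paper. The only minor subtlety to flag is that the definition of $e(S,T)$ in \eqref{def.expander} is phrased to allow $S$ and $T$ to overlap (with edges inside $S \cap T$ counted twice), so the inequality applies verbatim without needing any disjointness assumption on $A$ and $B$; any positive value of $e(A,B)$ still certifies the existence of an edge joining the two sets.
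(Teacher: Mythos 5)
Your proof is correct and takes exactly the same route as the paper: apply the defining inequality \eqref{def.expander} with $S=A$, $T=B$ to get $e(A,B)\geq \frac{d}{n}|A||B|-\lambda\sqrt{|A||B|}$, and observe that the hypothesis makes the right side positive. Your remark about $e(S,T)$ allowing overlap is accurate and harmless but not needed beyond what the paper itself records.
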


\begin{proof} By \eqref{def.expander},
\[e(A,B) \geq \frac{d}{n}|A||B| - \lambda\sqrt{|A||B|}.\]
The right side is positive if $|A||B|>\left( \frac{\lambda n}{d} \right)^2.$
\end{proof}

\begin{prop}[Vertex-expansion]\label{prop.expan.nonbi}
Let $G$ be an $n$-vertex $d$-regular $\gl$-expander. Then for every $A \sub V(G)$,
\[ \frac{|N(A)|}{|A|} \ge \left(\frac{d}{\gl}\left(1-\frac{|N(A)|}{n}\right)\right)^2 .\]
\end{prop}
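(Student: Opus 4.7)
The plan is to apply the defining expander inequality \eqref{def.expander} to the pair $(S,T) = (N(A), A)$ and then rearrange. The crucial input is the identity $e(N(A), A) = d|A|$: every vertex $v \in A$ has all $d$ of its neighbors in $N(A)$ by the definition of $N(A)$, so $\sum_{v \in A} d_{N(A)}(v) = d|A|$, and the convention that edges inside $N(A) \cap A$ are counted twice makes this sum equal to $e(N(A), A)$.

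With this identity in hand, I would substitute into \eqref{def.expander} to obtain $\bigl| d|A| - (d/n)|N(A)||A| \bigr| \le \lambda \sqrt{|N(A)||A|}$. Since $|N(A)| \le n$, the expression inside the absolute value is non-negative and equals $d|A|(1 - |N(A)|/n)$, so I can drop the absolute value. Squaring both sides and dividing by $\lambda^2 |A|^2$ then yields the desired inequality.

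There is no substantive obstacle; after the right pair of sets is recognized, the whole argument is a single application of the $\lambda$-expander condition followed by rearrangement. The one step requiring mild care is the double-counting convention for $e(\cdot,\cdot)$, which is exactly what makes $e(N(A), A) = d|A|$ rather than being off by a factor of $2$ on edges inside $A \cap N(A)$. One could also verify this identity more explicitly by noting $e(N(A),A) = \sum_{v \in A} d_{N(A)}(v)$ and that $d_{N(A)}(v) = d$ for every $v \in A$.
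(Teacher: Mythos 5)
Your proof is correct and follows essentially the same route as the paper's: both apply the $\lambda$-expander inequality to the pair $(A, N(A))$, use the identity $e(A,N(A)) = d|A|$, and rearrange. The only cosmetic difference is that you explicitly note the non-negativity of $d|A|(1-|N(A)|/n)$ in order to drop the absolute value, whereas the paper simply uses the upper-bound direction of \eqref{def.expander} directly; both are fine.
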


\begin{proof}
By \eqref{def.expander},
\[ d|A| = e(A,N(A)) \leq \frac{d}{n} |A| |N(A)| + \lambda\sqrt{|A||N(A)|} ,\]
from which the conclusion follows after dividing both sides by $|A|$ and simplifying.
\end{proof}

\begin{prop}[Volume growth, \protect{\cite[Proposition~2.5]{peled2013lipschitz}}]\label{prop.volume}
Let $G$ be an $n$-vertex $d$-regular $\lambda$-expander. Then for every $v \in V(G)$ and integer $t \geq 0$,
\[ |B(v,t)| \geq \min\left\{ \frac{n}{2} , \left( \frac{d}{2\lambda} \right)^{2t} \right\}. \]
\end{prop}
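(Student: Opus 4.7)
The plan is to prove the statement by induction on $t$, using Proposition~\ref{prop.expan.nonbi} (vertex-expansion) as the engine. The base case $t=0$ is immediate, since $|B(v,0)|=1$ and $(d/(2\lambda))^0 = 1$.

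For the inductive step, assume the bound holds for some $t \geq 0$, and set $A := B(v,t)$, so that $B(v,t+1) = A^+ = A \cup N(A)$. In particular, $|B(v,t+1)| \geq \max\{|A|,|N(A)|\}$. The proof splits into two cases according to whether $N(A)$ is already ``large.'' If $|A| \geq n/2$ or $|N(A)| \geq n/2$, then $|B(v,t+1)| \geq n/2$ and we are done. Otherwise, $|N(A)| < n/2$, so $1 - |N(A)|/n > 1/2$, and Proposition~\ref{prop.expan.nonbi} gives
\[ \frac{|N(A)|}{|A|} \;\geq\; \left( \frac{d}{\lambda}\cdot \frac{1}{2} \right)^{\!2} \;=\; \left( \frac{d}{2\lambda} \right)^{\!2}. \]

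Now apply the inductive hypothesis. If $|A| \geq n/2$, we were already done above, so we may assume $|A| \geq (d/(2\lambda))^{2t}$. In the ``expanding'' case ($d/(2\lambda) \geq 1$), the display yields
\[ |B(v,t+1)| \;\geq\; |N(A)| \;\geq\; |A|\left(\frac{d}{2\lambda}\right)^{\!2} \;\geq\; \left(\frac{d}{2\lambda}\right)^{\!2(t+1)}, \]
finishing the inductive step. In the degenerate case $d/(2\lambda) < 1$, the right-hand side $(d/(2\lambda))^{2(t+1)}$ is smaller than $(d/(2\lambda))^{2t}$, so the trivial bound $|B(v,t+1)| \geq |A| \geq (d/(2\lambda))^{2t} \geq (d/(2\lambda))^{2(t+1)}$ suffices.

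I do not anticipate a genuine obstacle here: the argument is a textbook induction whose only delicate point is the bookkeeping for the two regimes of $|N(A)|$ (namely $|N(A)| \geq n/2$ versus $|N(A)| < n/2$) so that the factor $1 - |N(A)|/n$ in Proposition~\ref{prop.expan.nonbi} can be absorbed into the constant $1/2$. The price we pay for this crude bound is precisely the appearance of $2\lambda$ (rather than $\lambda$) in the denominator of the conclusion.
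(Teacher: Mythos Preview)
Your proof is correct and follows the natural route: induct on $t$, applying the vertex-expansion bound (Proposition~\ref{prop.expan.nonbi}) to $A=B(v,t)$ and splitting according to whether $|N(A)|$ has reached $n/2$. The paper does not supply its own proof of this proposition---it simply cites \cite[Proposition~2.5]{peled2013lipschitz}---and your argument is exactly the standard one underlying that reference.
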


\begin{cor}[Diameter, \protect{\cite[Corollary 2.6]{peled2013lipschitz}}]\label{prop.diam}
Let $G$ be a $n$-vertex $d$-regular $\gl$-expander. If $\gl<d/2$, then the diameter of $G$ is at most $\log(n)/\log(\frac{d}{2\gl}).$    
\end{cor}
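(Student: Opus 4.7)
The plan is to use the volume lower bound from Proposition~\ref{prop.volume} to show that any ball of a suitable radius contains at least half the vertices of $G$, so that two such balls centered at arbitrary vertices must either intersect or be joined by an edge. Fix any $u,v \in V(G)$ and choose $t$ to be the smallest integer satisfying $(d/(2\lambda))^{2t} \geq n/2$; since $\lambda < d/2$, we have $\log(d/(2\lambda))>0$, and this holds once $t \geq \log(n/2)/(2\log(d/(2\lambda)))$. Then Proposition~\ref{prop.volume} yields $|B(u,t)|,|B(v,t)| \geq n/2$.

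Next I would do a simple case split. If $B(u,t) \cap B(v,t) \neq \emptyset$, then any vertex $w$ in the intersection certifies $\dist_G(u,v) \leq \dist_G(u,w)+\dist_G(w,v) \leq 2t$. Otherwise, the two balls are disjoint subsets of $V(G)$ with $|B(u,t)|+|B(v,t)|\geq n$, so they must partition $V(G)$ into two blocks of size exactly $n/2$. Because $G$ is connected, there must be an edge between these two blocks, and following it produces a $(u,v)$-path of length at most $2t+1$. Either way, $\dist_G(u,v) \leq 2t+1$.

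Since $u,v$ were arbitrary, $\mathrm{diam}(G) \leq 2t+1$. Substituting the choice $t = \lceil \log(n/2)/(2\log(d/(2\lambda))) \rceil$ and simplifying gives a bound of the form $\log(n)/\log(d/(2\lambda)) + O(1)$; matching the precise statement $\log(n)/\log(d/(2\lambda))$ is a matter of absorbing the additive constant into the rounding of $t$, which I expect to be the only delicate point (and the only real obstacle): one has to verify that the worst case in which the two balls tile $V(G)$ into exact halves cannot coexist with the tightest choice of $t$, or alternatively one sharpens by one step using Proposition~\ref{prop.expan.nonbi} applied to a ball of size just below $n/2$. The underlying mechanism, however, is simply ``two majority sets must intersect,'' and the volume estimate of Proposition~\ref{prop.volume} is exactly what supplies that majority at radius $\log n/(2\log(d/(2\lambda)))$.
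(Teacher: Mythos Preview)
The paper does not give its own proof of this corollary; it is quoted from \cite{peled2013lipschitz}, and the preamble to Section~\ref{sec:prelim:expanders} explicitly refers readers there for omitted proofs. Your approach---grow balls via Proposition~\ref{prop.volume} until each contains at least $n/2$ vertices, then argue the two balls must meet---is the natural one and correctly yields a bound of the form $\log(n)/\log(d/(2\lambda)) + O(1)$.

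The gap you flag is real, but your proposed fix (``absorbing the additive constant into the rounding of $t$'') does not work in general: with $t=\lceil \log(n/2)/(2\log(d/(2\lambda)))\rceil$, the quantity $2t+1$ can exceed $\log(n)/\log(d/(2\lambda))$ by as much as roughly $3$ when $d/(2\lambda)$ is large, and no integer rounding removes this. The standard sharpening is to replace your appeal to bare connectedness with Proposition~\ref{prop:conn}: an edge between $B(u,a)$ and $B(v,b)$ is guaranteed as soon as $|B(u,a)|\cdot|B(v,b)| > (\lambda n/d)^2$, which is a far weaker requirement than both balls reaching $n/2$. This is also where the hypothesis $\lambda<d/2$ does genuine work (in your argument it is used only to make $\log(d/(2\lambda))$ positive). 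Taking $a+b$ just below the putative diameter and combining Proposition~\ref{prop:conn} with Proposition~\ref{prop.volume} is what buys back the missing additive term.
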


\subsection{Entropy}\label{sec:prelim:entropy}

We breifly recall some basic notions about entropy. For a discrete random variable $X$, the \emph{(Shannon) entropy} of $X$ is
\[ H(X) = \sum_x \pr(X=x) \log\frac{1}{\pr(X=x)} ,\]
where by convention $0\log\frac{1}{0} = 0$. For discrete random variables $X$ and $Y$, interpreting $X$ conditioned on $Y$ taking value $y$ as a random variable, we get
\[ H(X|Y=y) = \sum_x \pr(X=x|Y=y) \log\frac{1}{\pr(X=x|Y=y)} .\]
The \emph{conditional entropy} of $X$ given $Y$, denoted by $H(X|Y)$, is
\beq{conEntro}
H(X|Y) = \sum_y \pr(Y=y) H(X|Y=y) = \sum_y \pr(Y=y) \sum_x \pr(X=x|Y=y) \log\frac{1}{\pr(X=x|Y=y)} .
\enq
We write $H(X_1,\ldots,X_n)=H((X_1,\ldots,X_n))$ for jointly distributed random variables $X_1,\ldots,X_n$.

\begin{lem}\label{entropyprop} The following holds for discrete random variables $X$, $Y$, $Z$, and $X_1, \dots X_n$.
\begin{enumerate}
\item $H(X) \le \log |\mathrm{Image}(X)|$, where $\mathrm{Image}(X)$ is the set of values $X$ takes with positive probability, with equality iff $X$ is uniform from its image.
\item $H(X|Y) \le H(X)$
\item $H(X,Y)=H(X)+H(Y|X)$
\item $H(X_1\dots X_n|Y) \le H(X_1|Y) + \cdots + H(X_n|Y)$
\item if $Z$ is determined by $Y$, then
$H(X|Y) \le H(X|Z)$
\item if $Z$ is determined by $X$, then
$H(X,Z|Y) = H(X|Y)$.
\item $ H(X|Z) \leq H(X|Y) + H(Y|Z)$
\end{enumerate}
\end{lem}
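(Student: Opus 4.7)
The plan is to prove the seven items in an order that lets each later item build on earlier ones. The only genuine analytic input is the concavity of $\log$; everything else is algebra starting from the definition of conditional entropy in~\eqref{conEntro}, plus repeated use of the conditional versions of items proved earlier.

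For (1), I apply Jensen's inequality to $\log$: writing $H(X) = \mathbb{E}[\log(1/p(X))]$ and using concavity gives $H(X) \le \log \mathbb{E}[1/p(X)] = \log|\mathrm{Image}(X)|$, with strict concavity of $\log$ supplying the equality case. For (3), I expand $H(X,Y) = -\sum_{x,y} p(x,y)\log p(x,y)$ via $p(x,y) = p(x)p(y|x)$ and split the logarithm, matching the resulting two sums against $H(X)$ and $H(Y|X)$. Property (6) is equally direct: if $Z=g(X)$, then $\pr(X=x, Z=z \mid Y=y)$ vanishes unless $z=g(x)$, so the double sum defining $H(X,Z|Y=y)$ collapses to the single sum defining $H(X|Y=y)$, and averaging over $y$ yields $H(X,Z|Y) = H(X|Y)$.

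For (2), I invoke non-negativity of Kullback--Leibler divergence: for each fixed $y$, $\sum_x p(x|y)\log(p(x|y)/p(x)) \ge 0$, which rearranges to $H(X|Y=y) \le \sum_x p(x|y)\log(1/p(x))$. Averaging over $y$ with weight $p(y)$ and swapping the order of summation yields $H(X|Y) \le H(X)$. An identical argument applied inside each fiber of an additional variable $W$ proves the \emph{conditional} form $H(X|Y,W) \le H(X|W)$, which I will use repeatedly below.

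For the remaining items, I would apply the chain rule conditional on a variable, obtained by applying (3) fiberwise. Property (5) follows because conditioning on $Y=y$ already determines $Z=g(y)$, so $H(X|Y=y) = H(X|Y=y, Z=g(y))$; averaging gives $H(X|Y) = H(X|Y,Z)$, and the conditional form of (2) then bounds this by $H(X|Z)$. Property (4) follows by iterating the conditional chain rule to write $H(X_1,\ldots,X_n|Y) = \sum_i H(X_i | X_1,\ldots,X_{i-1}, Y)$ and then bounding each summand by $H(X_i|Y)$ via conditional (2). Finally for (7), the chain rule gives $H(X,Y|Z) = H(X|Z) + H(Y|X,Z) \ge H(X|Z)$ (using $H(Y|X,Z)\ge 0$) and also $H(X,Y|Z) = H(Y|Z) + H(X|Y,Z) \le H(Y|Z) + H(X|Y)$ via conditional (2), so chaining these yields $H(X|Z) \le H(Y|Z) + H(X|Y)$. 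No step presents a real obstacle; the work is purely bookkeeping, and the only point needing care is cleanly stating the conditional versions of (2) and (3) before invoking them.
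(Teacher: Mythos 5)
Your proof is correct, and for item (7) — the only one the paper proves explicitly (the rest are dismissed as "fairly standard") — your argument via the conditional chain rule and the fact that conditioning reduces entropy is the same in substance as the paper's one-line chain $H(X|Y)+H(Y|Z) \geq H(X|Y,Z)+H(Y|Z) = H(X,Y|Z) \geq H(X|Z)$, just reorganized. The proofs you supply for items (1)--(6) are the standard ones and fill in what the paper omits.
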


\begin{proof} As the first six statements are fairly standard, we only prove (7):
$H(X|Y)+H(Y|Z) \geq H(X|Y,Z)+H(Y|Z) = H(X,Y,Z) - H(Y,Z) + H(Y,Z) - H(Z) = H(X,Y|Z) \geq H(X|Z)$.
\end{proof}

We also need the following version of \emph{Shearer's Lemma} \cite{chung1986some}.

\begin{lem}\label{lem:Sh}
Let $X=(X_1, \dots, X_N)$ be a random vector, and suppose $\alpha:2^{[N]}\rightarrow \mathbb R^+$
satisfies
\[ \sum_{A\ni i}\alpha_A =1 \quad \forall i\in [N].\]
Then for any partial order $\prec$ on $[N]$, we have
\[ H(X)\leq \sum_{A\subseteq [N]}\alpha_AH\big(X_A|(X_i:i \prec A)\big),\]
where $X_A=(X_i:i\in A)$ and $i\prec A$ means $i\prec a$ for all $a\in A$.
\end{lem}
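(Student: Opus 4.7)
The plan is to derive Shearer's inequality from two tools already collected in \Cref{entropyprop}: the chain rule $H(Y,Z)=H(Y)+H(Z\mid Y)$ (part (3)) and the monotonicity principle $H(U\mid V,W)\le H(U\mid W)$, which follows from part (5) applied to $W$ as a function of $(V,W)$. First I would fix a linear extension $\pi$ of the partial order $\prec$, and for each $i\in[N]$ set $P(i):=\{j\in[N]:j<_\pi i\}$; because $\pi$ extends $\prec$, we have $\{j:j\prec i\}\subseteq P(i)$. Applying the chain rule along $\pi$ yields the standard identity
\[ H(X)=\sum_{i\in[N]} H(X_i\mid X_{P(i)}), \]
which is the quantity I need to match from below by the right-hand side of Shearer.

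Next, for each fixed $A\subseteq[N]$ I would list its elements in $\pi$-order as $a_1<_\pi\cdots<_\pi a_k$, write $X_{\prec A}=(X_j:j\prec A)$, and apply the chain rule within $A$ to get
\[ H(X_A\mid X_{\prec A})=\sum_{j=1}^k H\!\left(X_{a_j}\,\big|\,X_{\prec A},\,X_{a_1},\ldots,X_{a_{j-1}}\right). \]
The crucial inclusion is $\{j:j\prec A\}\cup\{a_1,\ldots,a_{j-1}\}\subseteq P(a_j)$: any $i\prec A$ satisfies $i\prec a_j$ and hence $i\in P(a_j)$, while each $a_\ell$ with $\ell<j$ lies in $P(a_j)$ by our ordering. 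Monotonicity then gives $H(X_{a_j}\mid X_{P(a_j)})\le H(X_{a_j}\mid X_{\prec A},X_{a_1},\ldots,X_{a_{j-1}})$, and summing over $j\in[k]$ produces
\[ H(X_A\mid X_{\prec A})\ge \sum_{i\in A} H(X_i\mid X_{P(i)}). \]

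Finally, I would multiply by $\alpha_A$, sum over $A\subseteq[N]$, swap the order of summation, and use the hypothesis $\sum_{A\ni i}\alpha_A=1$ to conclude
\[ \sum_{A\subseteq[N]}\alpha_A H(X_A\mid X_{\prec A})\ge \sum_{i\in[N]}\left(\sum_{A\ni i}\alpha_A\right) H(X_i\mid X_{P(i)})=H(X). \]
The only real subtlety, and the place I would be most careful, is the set inclusion $\{j:j\prec A\}\cup\{a_1,\ldots,a_{j-1}\}\subseteq P(a_j)$: this is exactly where the assumption that $\prec$ is a partial order enters, through the existence of a linear extension that simultaneously refines $\prec$ and orders the elements of every $A$. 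Everything else is bookkeeping.
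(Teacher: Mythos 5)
Your proof is correct. The paper states this lemma without proof, citing Chung, Graham, Frankl, and Shearer \cite{chung1986some}, so there is no in-paper argument to compare against. Your argument is the standard derivation of Shearer's inequality (with the partial-order conditioning refinement): fix a linear extension of $\prec$, decompose $H(X)$ by the chain rule along that order, decompose each $H(X_A \mid X_{\prec A})$ by the chain rule inside $A$, and invoke the ``conditioning reduces entropy'' monotonicity (your part (5) of \Cref{entropyprop}) via the inclusion $\{j : j \prec A\} \cup \{a_1,\ldots,a_{j-1}\} \subseteq P(a_j)$, then swap the double sum using $\sum_{A \ni i}\alpha_A = 1$. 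Every step is sound, and you correctly flag the set inclusion as the only delicate point — it is exactly where the assumption that $\prec$ is a genuine partial order (hence admits a linear extension) is used.
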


\section{Graph containers for expanders}\label{sec:containers}

Sapozhenko's graph container method \cite{sapozhenko1987number} is a technique for bounding the number of subsets of vertices with a prescribed vertex-expansion. It plays a key role in numerous combinatorial problems (e.g.~\cite{balogh2024intersecting, balogh2022sharp, galvin2003homomorphisms, galvin2004phase, hamm2019erdHos, jenssen2023homomorphisms, jenssen2023approximately, kahn2022number, li2023number, park2022note}). Typically, the method is applied on graphs with weak vertex-expansion, meaning $|N(A)| \geq (1 + \ep) |A|$ for all appropriate $A$ and some small $\ep$. While our $\lambda$-expanders have stronger vertex-expansion (Proposition~\ref{prop.expan.nonbi}), a key ingredient of our proofs is the observation that Sapozhenko's graph container method is significantly more powerful for $\gl$-expanders than what is given by the vertex-expansion guarantees alone.

For a graph $G$, $v \in V(G)$, and $g, k \in \mathbb{N}$, define
\beq{def:container}
\cH_k(v,g):=\{X \sub V(G): X \mbox{ is $k$-linked in $G$},~v\in X,~\mbox{and }|N(X)|=g\} .
\enq
The reader may think of the case $k=4$, since this is what we use in Section~\ref{sec:main}. Sapozhenko's graph container method bounds the size of $\cH_k(v,g)$.

\begin{lem}\label{lem:container:expa}
There exists a universal constant $C$ such that the following holds. Let $G$ be an $n$-vertex $d$-regular $\gl$-expander. Then for any $v\in V(G)$ and $d\le g \leq n/2$,
\[ |\mathcal{H}_k(v,g)| \leq \exp\left( \frac{Cg}{d} \max\left\{ k\log(4\lambda/\sqrt{d})\log d,~ \frac{\lambda^2}{d} \right\} \right) .\]
\end{lem}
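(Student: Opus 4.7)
The plan is to adapt Sapozhenko's graph container method to the expander setting. I want to construct an injective encoding $X \mapsto (F,S)$, where $F$ is a small ``fingerprint'' of $X$ carrying a useful linked structure and $S$ is a small ``correction'' set; the bound on $|\mathcal{H}_k(v,g)|$ will then follow from counting valid pairs $(F,S)$. The expander hypothesis will enter in two places: via small-set vertex expansion (Proposition~\ref{prop.expan.nonbi}) and the expander mixing lemma \eqref{def.expander} to control $|S|$, which is what produces the $\lambda^2/d$ term in the stated max, and via the volume-growth bound (Proposition~\ref{prop.volume}) when counting fingerprints, which is where the $\log(4\lambda/\sqrt d)$ factor should arise.

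Given $X\in\mathcal{H}_k(v,g)$, set $Y := N(X)$ so $|Y|=g$. I would construct $F\subseteq Y$ greedily so that each vertex of $X$ has at least some threshold $\theta$ neighbors in $F$; a standard greedy counting argument using $d$-regularity yields $|F|=O(g/d)$ up to logarithmic factors. Since $Y$ mutually covers $X$, Proposition~\ref{prop:cover} transfers $k$-linkage of $X$ to $(k+2)$-linkage of $Y$, and the greedy construction can be arranged to preserve $(k+2)$-linkage on $F$ (and $v$ will be near, though not necessarily in, $F$, so we record the closest vertex of $F$ to $v$ as part of the encoding). Then set $\tilde X := \{u : d_F(u) \geq \theta\}$, so $X\subseteq \tilde X$, and let $S:=\tilde X\setminus X$ be the correction. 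Because every $u\in S$ has many neighbors in $F\subseteq Y$, there are no edges between $S$ and $V(G)\setminus Y$. Applying the expander mixing lemma \eqref{def.expander} to the pair $(S, V(G)\setminus Y)$ (of sizes $|S|$ and $n-g$) forces $\frac{d}{n}|S|(n-g) \leq \lambda\sqrt{|S|(n-g)}$, from which $|S|=O(g\lambda^2/d^2)$ after using $g\leq n/2$.

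The count splits accordingly. For $F$: the number of $(k+2)$-linked subsets of $V(G)$ of size $|F|$ containing a fixed vertex is at most $(e\Delta_{G^{k+2}})^{|F|-1}$ by Lemma~\ref{lem:numlinkset} applied to $G^{k+2}$, and $\Delta_{G^{k+2}} \leq |B(v,k+2)|$ can be controlled via the volume-growth estimate in Proposition~\ref{prop.volume}. A careful balance between the linkage radius and the growth rate of balls in a $\lambda$-expander should yield a contribution of the form $\exp\!\bigl(C(g/d)\cdot k\log d\cdot \log(4\lambda/\sqrt d)\bigr)$, matching the first term in the max. For $S$: the number of subsets of $V(G)$ of size at most $s = O(g\lambda^2/d^2)$ is at most $\binom{n}{s}\cdot (s+1)$, which is $\exp(O(g\lambda^2/d^2))$ under our hypotheses (in particular using \eqref{lambdalb1} to guarantee $\lambda^2/d$ is bounded below). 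Multiplying the two contributions gives the claimed bound.

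The main obstacle will be arranging the greedy construction so that the size estimate $|F| = O(g/d)$ holds \emph{simultaneously} with the $(k+2)$-linked property and with enough ``coverage'' of $X$ that the resulting correction $S$ is really controlled by \eqref{def.expander}. Choosing the threshold $\theta$ correctly is crucial: too small and $|S|$ blows up; too large and $|F|$ does. A secondary technical point is identifying the right pair of sets to which to apply the expander mixing lemma in order to extract the $\lambda^2/d^2$ saving for $|S|$ cleanly, without losing factors that would disrupt the second term in the max.
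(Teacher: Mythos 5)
Your overall structure --- encode each $X$ by a small linked fingerprint plus a correction, count fingerprints via Lemma~\ref{lem:numlinkset} and corrections via the mixing lemma --- is the right one and is close in spirit to the paper's proof. But the single-shot encoding $X\mapsto(F,S)$ does not work, for an arithmetic rather than merely technical reason. If you insist $d_F(x)\ge\theta$ for every $x\in X$ while $F\subseteq N(X)$, then $\theta|X|\le e(X,F)\le d|F|$ forces $|F|\ge\theta|X|/d$. To keep the recovery set $\tilde X=\{u:d_F(u)\ge\theta\}$ of size $O(g\lambda^2/d^2)$ (so that the correction count is $\exp(O(g\lambda^2/d^2))$), you need $\theta=\Theta(d)$, which forces $|F|\gtrsim|X|=\Theta((\lambda/d)^2 g)$, far larger than $O(g/d)$ whenever $\lambda\gg\sqrt d$; and then the fingerprint count via Lemma~\ref{lem:numlinkset} is too large. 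Conversely, if $\theta$ is small enough that $|F|=O(g\log(\cdot)/d)$, then $|\tilde X|\le d|F|/\theta$ is far too big for the correction count. No single choice of $\theta$ threads this needle. The paper sidesteps the tension with a two-phase construction: Lemma~\ref{lem:contain1:expa} first produces a coarse $(k+2)$-linked cover $V$ of $X$ of size $O(g\log(4\lambda/\sqrt d)/d)$, and this is the \emph{only} object enumerated by Lemma~\ref{lem:numlinkset}; Lemma~\ref{lem:contain2:expa} then builds a $\psi$-approximating pair $(S,F)$ whose $F$ may have size up to $g$, but which is determined by $V$ together with two auxiliary sets $H,U$ of size $O(g/\psi)$, so its information content given $V$ is only $O(g\log d/\psi)$. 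With $\psi=d/6$ this costs $\exp(O(g\log d/d))$. Your proposal omits this intermediate coarse layer, and that omission is precisely what breaks the claimed bound $|F|=O(g/d)$.

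Two smaller errors. The statement ``there are no edges between $S$ and $V(G)\setminus Y$'' is false: a vertex $u\in S$ has $\ge\theta$ neighbors inside $F\subseteq Y$, but its remaining $d-\theta$ neighbors can lie anywhere, including outside $Y$. The correct statement, and the one the paper uses in Proposition~\ref{prop:SFbound}, is $e(S,F)\ge(d-\psi)|S|$; combining this with the mixing bound $e(S,F)\le\frac dn|S||F|+\lambda\sqrt{|S||F|}$ applied to the pair $(S,F)$ (not $(S,V(G)\setminus Y)$) and $|F|\le g$ yields $|S|\le(\lambda/(d(1-|F|/n)-\psi))^2 g$. Finally, counting the correction set by $\binom{n}{s}(s+1)$ is too crude, since $\log\binom{n}{s}\approx s\log(n/s)$ is not $O(s)$ when $s\ll n$; the right count is $2^{|\tilde X|}$, because $X\subseteq\tilde X$ and $X$ is determined by its complement inside $\tilde X$. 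It is the bound on $|\tilde X|$ (the paper's $|S|$), not on the symmetric-difference correction alone, that must be controlled.
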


Compare the bound in Lemma~\ref{lem:container:expa} to the bound given by the combination of Proposition~\ref{prop.expan.nonbi} and Lemma~\ref{lem:numlinkset}: for $X \in \mathcal{H}_k(v,g)$, we have $|X| \leq \left(\frac{2\lambda}{d}\right)^2 g$, and hence
\[ |\mathcal{H}_k(v,g)| \leq (ed^k)^{(2\lambda/d)^2 g} \leq \exp\left( g \frac{4(k+1)\lambda^2}{d^2} \ln d \right) .\]
For $\lambda \gg \sqrt{d}$, Lemma~\ref{lem:container:expa} improves upon this bound.

The contribution of \Cref{lem:container:expa} over the original graph container lemma is two-fold. First, when $\gl \ll d$, it provides a better upper bound on $|\cH_k(v,g)|$ than the proof of the original version of graph container lemma, which yields an upper bound of $\exp_2\left[(1-\tilde{\Omega}(1))g\right]$ (see \cite[Lemma 3.1]{galvin2019independent} for a formal quantification; a slight improvement was later presented in \cite[Lemma 3.1]{park2022note}). Second, there is no requirement for $G$ to be bipartite.

Although our proof of \Cref{MT.nonbip} does not use \Cref{lem:container:expa} --- rather, we use the ingredients of the proof of \Cref{lem:container:expa} that we present in the rest of this section --- we think \Cref{lem:container:expa} may be of independent interest so we include its proof. Our proof of \Cref{lem:container:expa} follows the outline of the original proof by Sapozhenko, while providing better bounds by taking advantage of the nice properties of $\lambda$-expanders. It consists of two phases, with the first phase dividing into two subphases:
\begin{enumerate}[I.]
\item Construct containers/approximations.
\begin{enumerate}[(i)]
    \item Collect mutual covers of the members of $\cH_k(v,g)$ that serve as 'coarse' containers. (Lemma \ref{lem:contain1:expa}).
    \item Algorithmically refine these coarse containers into `$\psi$-approximations' (\Cref{lem:contain2:expa}).
\end{enumerate}
\item Reconstruct the elements of $\cH_k(v,g)$ from their $\psi$-approximations. 
\end{enumerate}

\begin{lem}\label{lem:contain1:expa}
There exists a universal constant $C_1 > 0$ such that the following holds. 
Let $G$ be an $n$-vertex $d$-regular $\lambda$-expander, and let $k \in \mathbb{Z}^+$. Then for any $v \in V(G)$ and for any $d \leq g \leq \frac{n}{2}$, there exists a family $\mathcal{V}_k(v,g) \subseteq 2^{V(G)}$ with 
\[ |\mathcal{V}_k(v,g)| \leq \exp\left( \frac{C_1 k \log(4\lambda/\sqrt{d})\log(d)}{d} g \right) \]
such that for every $X\in\cH_k(v,g)$, there is a mutual cover of $X$ in $\mathcal{V}_k(v,g)$.
\end{lem}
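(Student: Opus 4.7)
The plan is to follow Sapozhenko's graph container scheme: for each $X\in\cH_k(v,g)$, I will greedily construct a small set $Y$ that mutually covers $X$, and then bound the number of possible $Y$'s using the $(k+2)$-linkedness that $Y$ inherits from $X$ via Proposition~\ref{prop:cover}. The point of departure from the classical argument is to calibrate the greedy's stopping rule using the $\lambda$-expander estimates in Propositions~\ref{prop:conn}--\ref{prop.expan.nonbi}, replacing the naive $\log|X|$ factor by the sharper $\log(4\lambda/\sqrt d)$.

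\textbf{Two-phase greedy construction of $Y$.} Initialize $Y\leftarrow\emptyset$ and $U\leftarrow X$. During the greedy phase, while $|U|>g/d$, pick a vertex $y\in X^+$ maximizing $|N(y)\cap U|$, add $y$ to $Y$, and delete $N(y)$ from $U$. During the cleanup phase, once $|U|\le g/d$, set $Y\leftarrow Y\cup U$. By construction, $Y\subseteq X^+$ and $X\subseteq Y^+$, so $Y$ mutually covers $X$. Proposition~\ref{prop.expan.nonbi} (with $|N(X)|=g\le n/2$) gives $|X|\le(2\lambda/d)^2 g\le 4g$, hence $|X^+|\le|X|+g\le 5g$; combined with $\sum_{y\in X^+}|N(y)\cap U|\ge d|U|$, the greedy pick covers at least $d|U|/|X^+|=\Omega(d|U|/g)$ new vertices, so $|U|$ contracts by a factor $1-\Omega(d/g)$ per step. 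The greedy phase therefore terminates in $O\!\big((g/d)\log(|X|d/g)\big)=O\!\big((g/d)\log(4\lambda^2/d)\big)=O\!\big((g/d)\log(4\lambda/\sqrt d)\big)$ steps, and the cleanup adds at most $g/d$ more vertices. Hence $|Y|\le C\,g\log(4\lambda/\sqrt d)/d$ for some universal constant $C$.

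\textbf{Counting the $Y$'s.} Since $X$ is $k$-linked and $Y$ mutually covers $X$, Proposition~\ref{prop:cover} gives that $Y$ is $(k+2)$-linked in $G$. Also $v\in X\subseteq Y^+$ implies $Y\cap(\{v\}\cup N(v))\neq\emptyset$, so $Y$ can be rooted at one of the $d+1$ vertices of $\{v\}\cup N(v)$. Since $G^{k+2}$ has maximum degree at most $d^{k+2}-1$, Lemma~\ref{lem:numlinkset} applied to $G^{k+2}$ bounds the number of $(k+2)$-linked subsets of $V(G)$ of size $m$ containing a fixed vertex by $(ed^{k+2})^{m-1}$. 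Summing over $m\le Cg\log(4\lambda/\sqrt d)/d$ and over the $d+1$ possible roots,
\[ |\cV_k(v,g)| \le (d+1)\sum_{m\le Cg\log(4\lambda/\sqrt d)/d}(ed^{k+2})^{m-1} \le \exp\!\Big(C_1\,k\log(4\lambda/\sqrt d)\log d\cdot g/d\Big), \]
with the universal constant $C_1$ absorbing the geometric-series tail and the factor $d+1$.

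\textbf{Main obstacle.} The key step is producing the $\log(4\lambda/\sqrt d)$ factor rather than the $\log|X|=\Theta(\log n)$ factor that would result from running greedy all the way to $U=\emptyset$. Stopping at the threshold $|U|\approx g/d$ and dumping the residual into $Y$ uses the expansion bound $|X|\le(2\lambda/d)^2 g$ to confine the greedy's logarithmic cost to $\log(4\lambda^2/d)\le 2\log(4\lambda/\sqrt d)$. The remaining checks (mutual-cover property, linkedness transfer, and enumeration) are routine once this threshold is in place.
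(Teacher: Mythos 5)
Your proof is correct, and it takes a genuinely different route from the paper's. The paper constructs the small mutual cover probabilistically: setting $\ell=(4\lambda/\sqrt d)^4$, they first carve out a set $Q_0\subseteq N(X)$ of vertices with at least $\ell$ neighbors in $X$ and a set $X_0\subseteq X$ of vertices with at least $d/2$ neighbors in $Q_0$, show via the expander condition that $|Q_0|$ and $|X_0|$ are small, and then sample a random $Y\subseteq N(X)\setminus Q_0$ with inclusion probability $\ln\ell/d$; a Markov-inequality argument gives a choice of $Y$ of size $O\bigl(g\log(4\lambda/\sqrt d)/d\bigr)$ covering all but $O(g/d)$ of $X$, and the residual is covered trivially. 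You instead run a deterministic greedy set-cover with an early stopping rule at threshold $g/d$, dumping the residual into $Y$. Both proofs extract the crucial $\log(4\lambda/\sqrt d)$ (rather than $\log n$) from the same expansion fact $|X|\le(2\lambda/d)^2 g$ — in your case by bounding the number of greedy rounds needed to shrink $|U|$ from $|X|$ to $g/d$ by $O\bigl((g/d)\log(|X|d/g)\bigr)$, and in the paper by bounding the Markov residual $|X|/\sqrt\ell$. Your greedy version is arguably more elementary and avoids the bookkeeping around $Q_0$ and $X_0$, while the paper's version is the standard Sapozhenko template. Two small cosmetic points: the degree of $G^{k+2}$ is at most $\sum_{i=1}^{k+2}d^i\le 2d^{k+2}$, not $d^{k+2}-1$, but this constant factor is absorbed into $C_1$ (the paper makes the same harmless simplification); and the equality $\log(|X|d/g)=\log(4\lambda^2/d)$ should be an inequality $\le$, which is what you actually need.
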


Note that Lemma~\ref{lem:contain1:expa} does not impose an upper bound on $\lambda$, and hence works for all graphs. For $\lambda$ on the order of $\sqrt{d}$, Lemma~\ref{lem:contain1:expa} is an improvement (saving a $\log(d)$ factor) over the usual ``first approximation'' used for Sapozhenko's graph container method. The sets of $\mathcal{V}_k(v,g)$ are `containers' in the sense that every $X \in \mathcal{H}_k(v,g)$ is contained in some $N(V)$ with $V \in \mathcal{V}_k(v,g)$.

\begin{proof}
Let $\ell = \left( \frac{4\lambda}{\sqrt{d}} \right)^4$. Given $X \in \mathcal{H}_k(v,g)$, let
\[ Q_0 = \{ u \in N(X) : d_X(u) \geq \ell \} ,\]
\[ X_0 = \{ u \in X : d_{Q_0}(u) \geq d/2 \} .\]
By Proposition~\ref{prop.expan.nonbi} and that $g \leq \frac{n}{2}$, we have that
\beq{eq.A1}
|X| \leq \left( \frac{2\lambda}{d} \right)^2 g .
\enq
This implies that
\beq{g0} |Q_0| \leq \frac{e(X,Q_0)}{\ell} \leq |X| \frac{d}{\ell} \leq \left( \frac{2\lambda}{d} \right)^2 \frac{d}{\ell} g = \frac{d}{64\lambda^2} g ,\enq
which is at most $n/32$ since $g \leq n/2$ and $\lambda \stackrel{\eqref{lambdalb}}{\geq} \sqrt{d}/2$. Applying~\eqref{def.expander}, we have that
\[ \frac{d}{2} |X_0| \leq e(X_0,Q_0) \leq \frac{d}{n} |X_0||Q_0| + \lambda\sqrt{|X_0||Q_0|} \stackrel{\eqref{g0}}{\leq} \frac{1}{32} d |X_0| + \lambda\sqrt{|X_0||Q_0|} ,\]
which after rearranging gives
\beq{eq.A0}
|X_0| \leq \left( \frac{4\lambda}{d}\right)^2 |Q_0| \stackrel{\eqref{g0}}{\leq} \frac{g}{4d} .
\enq

We produce a cover of $X$ of size at most $O\left( \frac{\log\ell}{d} g \right)$ as follows: choose a random subset $Y$ of $N(X) \setminus Q_0$, including each vertex independently with probability $\frac{\ln\ell}{d}$, and add to it a minimum size cover of $X \setminus N(Y)$. For $v \in X \setminus X_0$, we have that
\[ \pr\left( v \not\in N(Y) \right) \leq \left( 1 - \frac{\ln\ell}{d} \right)^{d/2} \leq \exp\left( - \frac{\ln\ell}{2} \right) = \frac{1}{\sqrt\ell}.\]
Thus the expected size of $|(X \setminus X_0) \setminus N(Y)|$ is at most $|X|/\sqrt\ell$, so by Markov's inequality, there exists a set $Y \sub N(X) \setminus Q_0$ of size at most $2g\ln\ell/d$ that covers all but at most $2|X|/\sqrt \ell$ vertices in $X \setminus X_0$. Observe that any $Z \sub X$ trivially has a cover in $N(X)$ of size at most $|Z|$. Therefore, there exists a cover of $X$ in $N(X)$ (that is, a mutual cover of $X$) of size at most
\[ \frac{2\ln\ell}{d} g + \frac{2}{\sqrt\ell} |X| + |X_0| \stackrel{\eqref{eq.A1},\eqref{eq.A0}}{\leq} \left( \frac{8\ln(4\lambda/\sqrt{d})}{d} + \frac{1}{2d} + \frac{1}{4d} \right) g \stackrel{\eqref{lambdalb1}}{\leq} \frac{7 \log(4\lambda/\sqrt{d})}{d} g .\]

By Proposition~\ref{prop:cover}, since $X$ is $k$-linked, any mutual cover of $X$ is $(k+2)$-linked. Let $\mathcal{V}_k(v,g)$ be the collection of all $(k+2)$-linked subsets of $V(G)$ of size at most $\frac{7 \log(4\lambda/\sqrt{d})}{d} g$ which contain at least one vertex from $B(v,1)$. By~\Cref{lem:numlinkset},
\[ |\mathcal{V}_k(v,g)|\leq (d+1) \cdot \sum_{m=1}^{\frac{7\log(4\lambda/\sqrt{d})}{d} g}(ed^{k+2})^{m-1} \leq \exp\left( \frac{7 (k+3) \log(4\lambda/\sqrt{d}) \ln(d)}{d} g \right) ,\]
and $\mathcal{V}_k(v,g)$ has a mutual cover of every element of $\mathcal{H}_k(v,g)$.
\end{proof}

Now we refine the coarse containers. Following \cite{galvin2019independent, jenssen2020independent}, for $1 \leq \psi \le d-1$, define a \emph{$\psi$-approximating pair} for $X \sub V(G)$ to be a pair $(S,F) \in 2^{V(G)} \times 2^{V(G)}$ satisfying
\beq{approx1} F \sub N(X),~S \supseteq X;\enq
\beq{approx2} d_{V(G) \setminus F} (u) \le \psi \quad \forall u \in S; \text{ and}\enq
\[ d_S(v) \le \psi \quad \forall v \in V \setminus F.\]

\begin{lem}\label{lem:contain2:expa}
There exists a universal constant $C_2$ such that the following holds. Let $G$ be a $d$-regular graph, and let $1\le \psi\leq d/2$. For each $V\subseteq V(G)$ there exists a family $\mathcal{W}(V, g)\subseteq 2^{V(G)}\times 2^{V(G)}$ with 
\[
|\mathcal{W}(V, g)| \leq \exp\left( \frac{C_2 \log(d)}{\psi} g \right)
\]
such that any $X$ mutually covered by $V$ with $|N(X)|=g$ has a $\psi$-approximating pair in $\mathcal{W}(V, g)$.
\end{lem}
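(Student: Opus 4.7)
The plan is to construct $\mathcal{W}(V,g)$ as the set of outputs of a greedy refinement algorithm in the spirit of Sapozhenko's approximation lemma. Given $X$ with mutual cover $V$ and $|N(X)|=g$, the algorithm uses $X$ as an oracle and transforms $V$ into a $\psi$-approximating pair $(S,F)$ via a sequence of local moves. Each move will be specified by $O(\log d)$ bits, and the total number of moves will be $O(g/\psi)$; defining $\mathcal{W}(V,g)$ to be the image of this procedure as $X$ ranges over all admissible inputs then yields $|\mathcal{W}(V,g)|\le\exp\!\big(O((g/\psi)\log d)\big)$, which is the desired bound.

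Concretely, I would initialize $F_0$ to be a suitable subset of $V\cap N(X)$ and $S_0$ a carefully chosen superset of $X$ of size $O(g)$. Declare $u\in S$ to be \emph{$S$-bad} if $d_{V(G)\setminus F}(u)>\psi$ and $v\notin F$ to be \emph{$F$-bad} if $d_S(v)>\psi$. Iteratively, while a bad vertex exists, pick one in a canonical (say lexicographic) order: if $v\in N(X)\setminus F$ is $F$-bad, add $v$ to $F$; if $u\in S\setminus X$ is $S$-bad, remove $u$ from $S$. The invariants $F\subseteq N(X)$ and $S\supseteq X$ are preserved by construction, so when the procedure halts with no bad vertices, $(S,F)$ is a valid $\psi$-approximating pair for $X$.

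The number of iterations is bounded via the potential $\Phi:=e(V(G)\setminus F,\,S)$. Adding an $F$-bad vertex $v$ to $F$ removes the $d_S(v)>\psi$ edges from $v$ to $S$ from $\Phi$, and removing an $S$-bad vertex $u$ from $S$ removes the $d_{V(G)\setminus F}(u)>\psi$ edges from $u$ to $V(G)\setminus F$ from $\Phi$. Hence the number of iterations is at most $\Phi_0/\psi$, where $\Phi_0$ is the initial potential, and the plan is to ensure $\Phi_0=O(g)$ by an appropriate choice of $(S_0,F_0)$ that exploits $|X|\le (2\lambda/d)^2 g$ from \Cref{prop.expan.nonbi} together with the mutual cover property of $V$. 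Each iteration is encoded by one bit for the action plus $\log d+O(1)$ bits for the vertex: since the processed vertex contributes more than $\psi$ edges across the current $(V(G)\setminus F,\,S)$ cut, it is adjacent to at least one vertex already tracked by the algorithm's current state, so it can be specified by the address of such a neighbor (in a set whose size is controlled by the algorithm so far) together with its position in that neighbor's size-$d$ neighborhood.

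The main obstacle is the design of the initial pair $(S_0,F_0)$, which must simultaneously satisfy the invariants $F_0\subseteq N(X)$ and $S_0\supseteq X$, be reconstructible from $V$ modulo a short preamble in the trace, and yield $\Phi_0=O(g)$. The last point is the genuinely delicate one in the weakly-expanding regime $\lambda=\Theta(d)$: naive choices such as $F_0=V$ and $S_0=X$ give $\Phi_0$ as large as $\Theta(\lambda^2 g/d)=\Theta(dg)$, which is too weak by a factor of $d$. Overcoming this likely requires a finer initialization that exploits both the bound on $|V|$ from \Cref{lem:contain1:expa} and the structure of $V$ as a mutual cover. Once this is handled, the move-counting and per-move encoding steps follow the standard Sapozhenko template, and verifying that $V$ together with the trace uniquely determines the final $(S,F)$ is routine bookkeeping.
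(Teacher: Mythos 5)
Your high-level plan (greedy refinement plus a short trace/hint encoding) is the standard Sapozhenko template and is compatible in spirit with the paper's proof, but the specific potential argument you set up has a genuine gap, and the ``finer initialization'' you propose to fix it cannot work.

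The obstruction is not specific to the weakly-expanding regime. The lemma is stated for an arbitrary $d$-regular $G$, so you cannot invoke Proposition~\ref{prop.expan.nonbi} to control $|X|$; all you know is that $X$ is mutually covered by $V$, which gives $|X| \le (d+1)|V|$ and hence $|X|$ may be as large as $\Theta(dg)$. Consequently there need not exist any $S_0 \supseteq X$ with $|S_0| = O(g)$, and more to the point the initial potential $\Phi_0 = e(V(G)\setminus F_0, S_0)$ is $\Theta(d|X|)$ for any admissible $(S_0,F_0)$ with $F_0\subseteq N(X)$, which is $\Omega(dg)$ and can be much worse. Since each of your moves only decreases $\Phi$ by $\psi$, the move count is $\Omega(dg/\psi)$ and the trace costs $\Omega((dg/\psi)\log d)$ bits, an extra factor of $d$ over what you need. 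Equivalently: initially the set of $S$-bad vertices can already have size $\Omega(g)$ (rather than $O(g/\psi)$), so your one-vertex-per-move refinement has no hope of terminating in $O(g/\psi)$ steps.

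The paper avoids this with two structural tricks that your one-phase, single-potential scheme does not replicate. First, the $F$-side moves are batched: instead of adding one bad vertex to $F$, one records a vertex $v\in X$ with $d_{N(X)\setminus F'}(v) > \psi$ in a hint set $H$ and sets $F' = V\cup N(H)$, so each recorded vertex grows $F'$ by more than $\psi$; since $F'\subseteq N(X)$, the budget is $g$, giving $|H|\le g/\psi$. Second, and crucially, one does not try to maintain a small superset of $X$ from the outset. After the first phase, $S'$ is \emph{defined} as $\{v : d_{F'}(v)\ge d-\psi\}$. This automatically contains $X$ (because the first phase guarantees $d_{N(X)\setminus F'}(v)\le\psi$ for $v\in X$) and is automatically small: $|S'|(d-\psi)\le e(S',F')\le d|F'|\le dg$ gives $|S'|\le 2g$. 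Only then does the shrinking phase (hint set $U$, budget $|S'|\le 2g$) run, with $|U|\le 2g/\psi$. The final pair $(S,F)$ is determined by $V$, $H$, and $U$; counting $H\subseteq N(V)$ and $U\subseteq N(S')$ gives the stated bound. Without the batched first phase and the posterior definition of $S'$, your potential argument cannot reach $\Phi_0=O(g)$, so the proposal as written does not close the lemma.
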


The proof of \Cref{lem:contain2:expa} is very similar to the typical proof (see e.g. \cite[Lemma 5.5]{galvin2019independent}), except that we are not assuming $G$ is bipartite. This difference does not affect the proof in any way, so we will include the proof for completeness, but we will be brief.

\begin{proof}
Let $V \subseteq V(G)$ be given, let $X$ be mutually covered by $V$ with $|N(X)|=g$, and let $Q = N(X)$. Let $H \subseteq X$ be a minimum size set of vertices such that $F' := V \cup N(H) \sub N(X)$ satisfies for every $v \in X$, $d_{Q\setminus F'}(v) \leq \psi$. Let $S' = \{v \in V(G) : d_{F'}(v) \geq d-\psi\}$, and let $U \subseteq N(S') \setminus Q$ be a minimum size set of vertices such that $S = S' \setminus N(U)$ satisfies for every $v \not\in Q$, $d_S(v) \leq \psi$. Finally, let $F = F' \cup \{v \in V(G) : d_S(v) > \psi\}$.

We claim that $(S,F)$ is a $\psi$-approximating pair for $X$. Indeed, $F \subseteq N(X)$ since $F' \subseteq N(X)$ and if $d_S(v) > \psi$, then $v \in Q = N(X)$. Since $d_{F'}(v) = d - d_{Q\setminus F'}(v) \geq d-\psi$ for every $v \in X$, we have $X \subseteq S'$; since $U \cap Q = \emptyset$, we have $X \cap N(U) = \emptyset$, and hence $X \subseteq S' \setminus N(U) = S$. Note by definition that every $v \not\in F$ satisfies $d_S(v) \leq \psi$, and every $u \in S \subseteq S'$ satisfies $d_{V(G)\setminus F}(u) = d - d_F(u) \leq d - d_{F'}(u) \leq \psi$.

Now we must show that the number of choices for $H$ and $U$, and thus the number of choices for $(S,F)$, over all $X$ mutually covered by $V$ with $|N(X)|=g$ is at most $\exp\left( \frac{C_2 \log(d)}{\psi} g \right)$. We may select such an $H$ (not necessarily of minimum size) greedily, sequentially adding those $v \in X$ with $d_{Q \setminus F'}(v) > \psi$ to $H$, increasing the size of $F'$ by more than $\psi$ each time; hence $|H| \leq g/\psi$. Similarly, $|U| \leq |S'|/\psi$. Since $|S'|(d-\psi) \leq e(S',F') \leq |F'|d \leq gd$ and $\psi \leq d/2$, we have $|U| \leq \frac{d}{(d-\psi)\psi} g \leq \frac{2}{\psi} g$. Since $H \subseteq N(V)$ and $|N(V)| \leq |V|d \leq gd$, the number of choices for $H$ is at most $\binom{gd}{g/\psi} \leq \exp\left( \frac{3\ln(d)}{\psi} g \right)$; similarly, since $U \subseteq N(S')$ and $|N(S')| \leq |N(N(F'))| \leq gd^2$, the number of choices for $U$ is at most $\binom{gd^2}{2g/\psi} \leq \exp\left( \frac{8\ln(d)}{\psi} g \right)$, which finishes the proof.
\end{proof}

Combining Lemmas~\ref{lem:contain1:expa} and~\ref{lem:contain2:expa} immediately results in \Cref{lem:container:dre}.

\begin{lem}\label{lem:container:dre}
There exists a universal constant $C_3$ such that the following holds.
Let $G$ be an $n$-vertex $d$-regular $\lambda$-expander, and let $k \in \mathbb{Z}^+$. Then for any $v \in V(G)$, any $d \leq g \leq \frac{n}{2}$, and $1\le \psi \le d/2$, there is a family $\cU_k(v,g) \sub 2^{V(G)} \times 2^{V(G)} $ with
\[ |\cU_k(v,g)| \le \exp\left( C_3 \left( \frac{k \log(4\lambda/\sqrt{d})\log(d)}{d} + \frac{\log(d)}{\psi} \right) g \right)  \]
such that every $X \in \cH_k(v,g)$ has a $\psi$-approximating pair in $\cU_k(v,g)$.
\end{lem}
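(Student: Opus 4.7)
The plan is essentially to stitch Lemma~\ref{lem:contain1:expa} and Lemma~\ref{lem:contain2:expa} together in the obvious way. First I would invoke Lemma~\ref{lem:contain1:expa} with the given $v$, $g$, $k$ to produce the family $\mathcal{V}_k(v,g)$ of coarse containers, with
\[ |\mathcal{V}_k(v,g)| \leq \exp\!\left( \frac{C_1 k \log(4\lambda/\sqrt{d})\log(d)}{d} g \right), \]
such that every $X \in \mathcal{H}_k(v,g)$ admits a mutual cover $V_X \in \mathcal{V}_k(v,g)$. Note the hypotheses $d \leq g \leq n/2$ carry over directly.

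Next, for each individual $V \in \mathcal{V}_k(v,g)$ I would apply Lemma~\ref{lem:contain2:expa} (with the given $\psi$, which satisfies $1 \leq \psi \leq d/2$) to obtain a family $\mathcal{W}(V,g)$ of $\psi$-approximating pairs with $|\mathcal{W}(V,g)| \leq \exp\!\left(\frac{C_2 \log d}{\psi} g\right)$, such that every $X$ mutually covered by $V$ with $|N(X)|=g$ has a $\psi$-approximating pair in $\mathcal{W}(V,g)$. I would then simply define
\[ \mathcal{U}_k(v,g) := \bigcup_{V \in \mathcal{V}_k(v,g)} \mathcal{W}(V,g) \subseteq 2^{V(G)} \times 2^{V(G)}. \]

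To verify the conclusion: given any $X \in \mathcal{H}_k(v,g)$, Lemma~\ref{lem:contain1:expa} supplies a mutual cover $V_X \in \mathcal{V}_k(v,g)$, and then Lemma~\ref{lem:contain2:expa} applied to $V_X$ supplies a $\psi$-approximating pair for $X$ inside $\mathcal{W}(V_X,g) \subseteq \mathcal{U}_k(v,g)$. Finally, the size bound follows from the union bound
\[ |\mathcal{U}_k(v,g)| \leq |\mathcal{V}_k(v,g)| \cdot \max_{V} |\mathcal{W}(V,g)| \leq \exp\!\left( \left(\frac{C_1 k \log(4\lambda/\sqrt d)\log d}{d} + \frac{C_2 \log d}{\psi}\right) g \right), \]
and setting $C_3 := \max\{C_1,C_2\}$ yields the stated form. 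There is no real obstacle here: the two lemmas were set up precisely so that the first produces the mutual covers needed as input for the second, and the proof is a one-line synthesis. The only thing to watch is that the hypotheses on $v$, $g$, $\psi$ in the two lemmas are compatible with those assumed in Lemma~\ref{lem:container:dre}, which is immediate.
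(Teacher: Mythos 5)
Your proof is correct and is exactly the synthesis the paper intends: the authors state only that combining Lemmas~\ref{lem:contain1:expa} and~\ref{lem:contain2:expa} immediately gives Lemma~\ref{lem:container:dre}, and your construction $\mathcal{U}_k(v,g) = \bigcup_{V \in \mathcal{V}_k(v,g)} \mathcal{W}(V,g)$ with the multiplicative size bound is precisely that combination spelled out.
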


To prove Lemma~\ref{lem:container:expa}, we first choose a $\psi$-approximating pair $(S,F)$, and then bound the number of choices for $X$ for which $(S,F)$ is a $\psi$-approximating pair by simply using that $X \subseteq S$. To do this, an upper bound on $|S|$ is needed, which is nicely provided for expander graphs.

\begin{prop}\label{prop:SFbound}
Let $G$ be an $n$-vertex $d$-regular $\gl$-expander. 
Suppose that $(S,F)$ is a $\psi$-approximating pair for some $X\subseteq V(G)$. Then
\[ |S| \leq \left( \frac{\lambda}{d(1-|F|/n) - \psi} \right)^2 |F| .\]
\end{prop}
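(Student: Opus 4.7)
The plan is to bound $e(S,F)$ from both sides and combine the bounds. The lower bound comes from the defining property of the $\psi$-approximating pair, and the upper bound comes from the expander property \eqref{def.expander}.

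\textbf{Step 1: Lower bound on $e(S,F)$.} By condition \eqref{approx2}, every $u \in S$ satisfies $d_{V(G)\setminus F}(u) \leq \psi$, so $d_F(u) = d - d_{V(G)\setminus F}(u) \geq d-\psi$. Summing over $u \in S$ gives
\[ e(S,F) \geq (d-\psi)|S|. \]

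\textbf{Step 2: Upper bound on $e(S,F)$.} Applying \eqref{def.expander} with the sets $S$ and $F$ yields
\[ e(S,F) \leq \frac{d}{n}|S||F| + \lambda\sqrt{|S||F|}. \]

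\textbf{Step 3: Combine and rearrange.} Chaining the two bounds and grouping the $|S|$ terms gives
\[ \Bigl(d(1-|F|/n) - \psi\Bigr)|S| \leq \lambda\sqrt{|S||F|}. \]
Dividing through by $\sqrt{|S|}$ (assuming $|S|>0$; otherwise the conclusion is trivial) and squaring produces the advertised inequality
\[ |S| \leq \left( \frac{\lambda}{d(1-|F|/n)-\psi}\right)^{2}|F|. \]

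There is no real obstacle here: the statement is essentially the expander mixing lemma combined with the ``thick neighborhood'' condition built into the definition of a $\psi$-approximating pair. The only mild point worth noting is that the bound is only informative when $d(1-|F|/n)>\psi$; in the regime where this fails, the inequality either becomes vacuous (negative denominator, in which case one interprets the bound as $+\infty$) or follows from the trivial bound $|S| \leq n$, so the statement as written can be presented without a case split.
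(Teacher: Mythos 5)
Your proof is correct and follows exactly the same approach as the paper: lower bound $e(S,F)$ via the $\psi$-approximation condition \eqref{approx2}, upper bound it via the expander property \eqref{def.expander}, then rearrange. Your additional remarks about the degenerate cases (where the denominator is nonpositive) are a fine clarification but not a departure from the paper's argument.
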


\begin{proof}
Observe that
\[(d-\psi)|S|\stackrel{\eqref{approx2}}{\le} e(S,F) \stackrel{\eqref{def.expander}}{\le} \frac{d}{n}|S||F|+\gl\sqrt{|S||F|} .\]
Subtracting $\frac{d}{n}|S||F|$ from both sides, dividing by $\sqrt{|S|}$, and simplifying finishes the proof.
\end{proof}

\begin{proof}[Proof of \Cref{lem:container:expa}]
Let $\psi=d/6$ and suppose that $(S, F)$ is a $\psi$-approximating pair of some $X\in \cH_k(v,g)$.
Note by~\eqref{approx1} that $|F|\le |N(X)|=g \leq n/2$, and therefore \Cref{prop:SFbound} gives
\[
|S| \leq \left( \frac{\lambda}{d(1-|F|/n) - \psi} \right)^2|F| \le \left( \frac{\lambda}{d/2 - \psi} \right)^2g \le \left( \frac{3\lambda}{d} \right)^2g.
\]
By applying \Cref{lem:container:dre} (with $\psi=d/6$) and~\eqref{approx1}, we obtain that
\[
|\mathcal{H}_k(v,g)| \le \sum_{(S, F)\in \cU_k(v,g)}2^{|S|} \le \exp\left( C\left( \frac{k\log(4\lambda/\sqrt{d})\log(d)}{d} + \frac{\lambda^2}{d^2} \right) g \right)
\]
for some absolute constant $C$.
\end{proof}

\section{Proof of Theorem~\ref{MT.nonbip}}\label{sec:main}

We begin this section by fixing our basic setup. After that, in Section~\ref{pf.overview}, we give a proof overview, highlighting the novelties compared to~\cite{peled2013lipschitz}; additionally, we fix a more advanced setup, allowing us to state the main technical lemma, Lemma~\ref{lem.nonbip}. In Section~\ref{sec:lem.nonbip}, we prove Lemma~\ref{lem.nonbip}, and hence Theorem~\ref{MT.nonbip}.

Recall that $G$ is a connected $n$-vertex $d$-regular $\gl$-expander, and $M\in \mathbb{Z}^+$ where $\lambda \leq d/5 \leq cn$ and~\eqref{eq.Mld} hold. Throughout this section, we fix an arbitrary vertex $w_0 \in V(G)$, and an integer $t\geq 2$.
By the translational symmetry of $\Z$, we may assume $k=0$: our goal is to give an upper bound on the probability that $f(w_0)\ge tM+2$, where $f\in_R \Lip_{0}^*(G;M)$.

For brevity, we write \[\cL := \Lip_0^*(G;M).\] 
For every $f \in \cL$, by the definition of $\cL$, we have $f(v) \in [0,M]$ for all but at most $(2\lambda/d) n$ vertices $v \in V(G)$.
We define the following:
\[\text{$A(f)$ is the 2-linked component of $\{v : f(v) \ge M+1\}$ containing $w_0$; and}\]
\[\text{$B(f)$ is the 4-linked component of $\{v : f(v) \ge 2M+2\}$ containing $w_0$.}\] 
Note that for $t\geq 2$, both $A(f)$ and $B(f)$ are well-defined and non-empty. We quickly observe that $B(f)$ is contained in the interior of $A(f)$.

\begin{prop}\label{obs.BinA}
For every $f \in \cL$, $B(f) \sub A(f)^\circ$, or equivalently, $B(f)^+ \sub A(f)$.
\end{prop}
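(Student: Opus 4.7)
The plan is to unfold the definitions and use the $M$-Lipschitz property of $f$ to ``upgrade'' a $4$-linked chain in the high level set $H := \{w \in V(G) : f(w) \geq 2M+2\}$ to a $2$-linked chain in the low level set $L := \{w \in V(G) : f(w) \geq M+1\}$. The equivalent formulation $B(f)^+ \subseteq A(f)$ asks that for every $u \in B(f)$ and every $v \in u^+ = \{u\} \cup N(u)$, $v \in A(f)$. Since $f(u) \geq 2M+2$ and $\dist_G(u,v) \leq 1$, the $M$-Lipschitz condition gives $f(v) \geq f(u) - M \geq M+2$, so $v \in L$; it then remains to show that $v$ is $2$-linked to $w_0$ inside $L$.

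The key mini-claim I would establish is: \emph{if $x, y \in H$ with $\dist_G(x,y) \leq 4$, then there is a chain $x = z_0, z_1, \dots, z_r = y$ with each $z_i \in L$ and $\dist_G(z_i, z_{i+1}) \leq 2$.} For the proof, take a shortest $G$-path $x = p_0, p_1, \dots, p_s = y$ with $s \leq 4$. If $s \leq 2$, the chain $(x,y)$ itself works. If $s \in \{3,4\}$, the vertices $p_1$ and $p_{s-1}$ are adjacent to $x$ and $y$ respectively, so the $M$-Lipschitz condition gives $f(p_1), f(p_{s-1}) \geq 2M+2 - M = M+2$, placing them in $L$, while $\dist_G(p_1, p_{s-1}) = s-2 \leq 2$; hence $x, p_1, p_{s-1}, y$ is a valid $2$-chain in $L$.

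To conclude, because $u \in B(f)$, there is a $4$-linked chain $u = u_0, u_1, \dots, u_k = w_0$ inside $H$. Applying the mini-claim to each consecutive pair $(u_i, u_{i+1})$ and concatenating the resulting short $2$-chains yields a single $2$-linked chain from $u$ to $w_0$ inside $L$ (all $u_i$ lie in $H \subseteq L$, and the inserted intermediate vertices lie in $L$ by construction), so $u \in A(f)$. If $v \neq u$, prepending $v$ (which lies in $L$ and satisfies $\dist_G(v,u) = 1 \leq 2$) extends this to a $2$-linked chain from $v$ to $w_0$ in $L$, so $v \in A(f)$ in every case. The argument is essentially a definition chase; the only nontrivial point is the upgrading step, and the asymmetric choice of linked-ness parameters ($k=2$ for $A(f)$ versus $k=4$ for $B(f)$) is precisely what provides the slack needed to absorb the $M$-Lipschitz losses along the two end-edges of the shortest path.
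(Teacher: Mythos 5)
Your proof is correct and takes essentially the same approach as the paper's: both exploit that the $M$-Lipschitz condition forces every neighbor of a vertex in $\{f \geq 2M+2\}$ to lie in $\{f \geq M+2\}$, and use this to convert a $4$-linked chain in the high level set into a $2$-linked chain in the low level set containing $w_0$. The paper phrases the argument via $2$-linked components rather than explicitly concatenating chains, but the underlying idea and the mini-claim you prove are the same.
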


\begin{proof}
Since $f$ is $M$-Lipschitz, for every $v$ with $f(v) \geq 2M+2$, if $u \in N(v)$, then $f(u) \geq 2M+2-M = M+2$. This implies that if $v$ and $v'$ are adjacent in $G^4$ and $f(v), f(v') \geq 2M+2$, then $v$ and $v'$ are in the same $2$-linked component of $\{v : f(v) \geq M+2\}$. Thus $B(f)$ is a subset of the $2$-linked component of $\{v : f(v) \geq M+2\}$ containing $w_0$, which is a subset of $A(f)$. Furthermore, for every $u \in B(f)^+$, we have $f(u) \geq M+1$, so $u \in A(f)$, that is, $B(f)^+ \subseteq A(f)$.
\end{proof}

In the proof of Proposition~\ref{obs.BinA}, it is required that the linked-ness of $B(f)$ is at most two more than the linked-ness of $A(f)$. We need the linked-ness of $B(f)$ to be at least $4$ for Proposition~\ref{prop:benchmark} to hold. While the choice of $M+1$ in the definition of $A(f)$ may be clear, since this is the condition for being outside of the `ground state' $[0,M]$, the choice of $2M+2$ in the definition of $B(f)$ is a bit more opaque. Informally, we a priori expect each vertex $v$ to have $M+1$ possibilities for $f(v)$, as is the case for $v$ which are in the ground state (in which case $f(v) \in [0,M]$). With these definitions, for $v \in A(f) \setminus B(f)$, we have $M+1$ choices for $f(v)$. 

\subsection{Proof overview}\label{pf.overview}

It is instructive to begin with the proof of Peled, Samotij, and Yehudayoff~\cite{peled2013lipschitz} (PSY) for their version of Theorem~\ref{MT.nonbip}. The \emph{flaws} of a Lipschitz function (and more generally, a graph homomorphism) are the vertices which are not in the `ground state,' as defined by Lemma~\ref{lem.Lips.GS}, that is, those vertices which do not take typical values. To bound the number of Lipschitz functions with a large fluctuation, we may employ the following general strategy: we first bound the number of such functions with a given set of flaws, and then we sum over all possible choices for these flaws, using say Lemma~\ref{lem:numlinkset} or Lemma~\ref{lem:container:expa}.

Via elementary combinatorial arguments, PSY compared the number of $f \in \Lip_{v_0}(G;M)$ with $A(f)$ equal to a given set $A$ to $|\Lip_{v_0}(G;M)|$, without computing either. In particular, they found that the probability that $f \in_R \Lip_{v_0}(G;M)$ has a flaw $A(f) = A$ is at most
\beq{eq.PSY} M(2|A|+1) (2M+1)^{|A|} \left( 1 - \frac{1}{M+1} \right)^{|\outbound(A)|} .\enq
In a vague sense, the `penalty' for having the flaw $A$ comes from the boundary of $A$, since there are fewer choices for $f(v)$ for $v \in \outbound(A)$ when $A$ is a flaw of $f$ than when it is not. While not exactly correct, here is an intuitive explanation of the $(1-\frac{1}{M+1})^{|\outbound(A)|}$ term in~\eqref{eq.PSY}: for an $f \in \Lip(G;M)$ with ground state $[0,M]$ (meaning most $f(v)$ are in $[0,M]$) there are typically $M+1$ choices for $f(v)$, while if $v$ is in the outer boundary of a flaw, then $f(v) \in [1,M]$ and there are only $M$ choices for $f(v)$, since $v$ is not in the flaw but is adjacent to a vertex $w$ with $f(w) \geq M+1$. The other terms in~\eqref{eq.PSY} come from the freedom the flaw allows $f$ by permitting \emph{any} value above $M$: if we have already determined $f(w)$ for some neighbor $w$ of $v$, then there are at most $2M+1$ choices for $f(v)$, namely $[f(w)-M,f(w)+M]$. (The $M(2|A|+1)$ term comes from that PSY used $\Lip_{v_0}(G;M)$ instead of $\Lip_0^*(G;M)$.) As long as $|\outbound(A)|$ is large enough compared to $|A|$ (which is true for good expanders by Proposition~\ref{prop.expan.nonbi}), these other terms are negligible when compared to the $(1-\frac{1}{M+1})^{|\outbound(A)|}$ penalty.

With~\eqref{eq.PSY} in hand, PSY summed over all possible choices of flaws using Lemma~\ref{lem:numlinkset}, which gives the number of $A$ of a particular size $a$ to be at most $(ed)^a$. As long as $a\log(d) \ll |\outbound(A)|/M$, the number of $A$ is negligible compared to~\eqref{eq.PSY} and the proof goes through. While Proposition~\ref{prop.expan.nonbi} suggests that $|A| \lesssim (\lambda/d)^2 |\outbound(A)|$, this is only true if $|A|$ is not too large. Since the flaws we must consider have size up to $\frac{2\lambda}{d} n$ by Lemma~\ref{lem.Lips.GS}, the expansion could be as poor as $|A| \lesssim (\lambda/d) |\outbound(A)|$, yielding the sufficient condition $(\lambda/d) \log(d) \ll 1/M$, or more precisely~\eqref{PSYcond}.

We improve the proof sketched above in three ways:

\begin{enumerate}[(a)]
\item \label{ei} Instead of considering flaws $A$ of size up to $\frac{2\lambda}{d} n$, we take advantage of the ordered structure of $\mathbb{Z}$ and consider \emph{2-fold flaws}, that is, pairs of sets $\{A(f),B(f)\}$, and apply our arguments to $B(f)$. Since $B(f)^+ \subseteq A(f)$, we have that $B(f)$ is much smaller than $A(f)$ and satisfies $|B(f)| \lesssim (\lambda/d)^2 |\outbound(B(f))|$ according to Proposition~\ref{prop.expan.nonbi}.

\item We compare the number of $f \in \Lip_0^*(G;M)$ with a given flaw using entropy techniques, in particular, Shearer's Lemma (\Cref{lem:Sh}). This essentially replaces the $(2M+1)^{|A|}$ factor in~\eqref{eq.PSY}, which accounted for the possible values of $f(v)$ with $v \in A$. Recall that the $(2M+1)^{|A|}$ factor comes from the $2M+1$ choices for $f(v)$ given $f(w)$ for some $w \in N(v)$. Instead of counting the number of possible $f_A$ vertex-by-vertex, where $f_A$ is $f$ restricted to $A$, Shearer's Lemma allows us to offset more possibilities for $f_{N(v)}$ with fewer possibilities for $f_v$. For example, if we know $a = \min f(N(v))$ and $b = \max f(N(v))$, then $b-M \leq f(v) \leq a+M$; thus the total number of possibilities for $f(w)$ and $f(v)$, where $w \in N(v)$, is at most $(b-a+1)((a+M)-(b-M)+1) \leq (M+1)^2$, which is better than the $(2M+1)^2$ bound given by the vertex-by-vertex method. The actual application of Shearer's Lemma and subsequent entropy calculations are more involved, but this example highlights the basic idea.

\item We replace the counting of the flaws using Lemma~\ref{lem:numlinkset} with a container approach combined with entropy. In fact, while using Lemma~\ref{lem:container:expa} (container method) would have already slightly improved the result of PSY, we make a crucial improvement by a delicate integration of the containers (via \Cref{lem:container:dre}) with our entropy approach. That is, instead of bounding the number of Lipschitz functions given a particular set of flaws, we bound the entropy of a random Lipschitz function given the containers for its flaws. Integrating entropy and graph containers has proven fruitful in some earlier graph homomorphism counting problems, for example,~\cite{jenssen2023homomorphisms, kahn2020number, li2023number}. Our approach seems closest to \cite{li2023number}, but we still require rather significant new ideas because of the different nature of the current problem.
\end{enumerate}

Another technical obstacle is that when counting Lipschitz functions under certain restrictions, one may need to consider not only $A(f)$ (and similarly for $B(f)$), the component of flaws containing $w_0$, but also other components of flaws that do not contain $w_0$. To get around this, several times throughout the proof we employ conditional probability, constraining the probability space of all Lipschitz functions on $V(G)$ to a smaller probability space of Lipschtiz functions on some $X \subseteq V(G)$, with boundary conditions outside of $X$ fixed. We do this because the event under study --- whether $f(w_0)>tM+2$ --- primarily depends on what occurs within $X$, rather than outside of $X$. 

Formally, we first partition the probability space $\cL ~(:=\Lip_0^*(G;M))$ according to $A(f)$ as follows. 
For any 2-linked set $A\subseteq V(G)$ with $w_0\in A$, and any function $p:  V\setminus A \rightarrow \mathbb{Z}$,
define the event
\[
\cL_p:=\{f \in \cL: A(f)=A, ~\text{and $f$ agrees with $p$ on $V\setminus A$}\} ,
\]
where by ``$f$ agrees with $p$ on $V \setminus A$'' we mean for all $v \in V \setminus A$, $f(v) = p(v)$. We suppress the dependence of $\cL_p$ on $A$ since, technically, the domain of $p$ tells us $A$; in what follows, 
\[\text{whenever we work with $p$, we will implicitly assume that $A$ is understood.}\]

Note that the events $\cL_p$ above are mutually exclusive, collectively exhaustive, and therefore only finitely many are non-empty. We can thus apply the law of total probability as follows: for $f\in_R \cL$,
\[
\pr(f(w_0)\ge tM+2) = \sum_{A, p} \pr(f(w_0)\ge tM+2 \mid f\in \cL_p)\cdot \pr(f\in \cL_p).
\]
\Cref{MT.nonbip} then follows immediately from the lemma below.

\begin{lem}\label{lem.nonbip} Suppose $G$ and $M$ satisfy the assumptions of \Cref{MT.nonbip}. Then for any 2-linked set $A\subseteq V(G)$ with $w_0\in A$, and any function $p: V(G) \setminus A \rightarrow \mathbb Z$, the following holds. If $f$ is chosen uniformly at random from $\mathcal L_p$, then
\beq{ML.ineq}\pr(f(w_0)\ge tM+2) \le \exp_2\left(-\frac{|B(w_0,t-1)|}{5M}\right). \enq    
\end{lem}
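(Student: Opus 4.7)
The plan is to partition the event $\{f(w_0) \ge tM+2\}$ according to $B(f)$, bound each piece via an entropy estimate, and sum using the graph container framework from Section~\ref{sec:containers}. Set $X := B(w_0, t-1)$ and $Y := B(w_0, t-2)$. If $f \in \cL_p$ satisfies $f(w_0) \ge tM+2$, then the $M$-Lipschitz property forces $f(v) \ge (t - \dist(v, w_0))M + 2$, so $f \ge M+2$ on $X$ and $f \ge 2M+2$ on $Y$. Since $X$ and $Y$ are connected and contain $w_0$, this gives $X \sub A(f) = A$ (else the probability is $0$ and we are done) and $Y \sub B(f)$. Every $f$ contributing to the event thus has $B(f) = B$ for some $4$-linked $B \sub A^\circ$ with $Y \sub B$ (using Proposition~\ref{obs.BinA}), so
\[
\pr(f(w_0) \ge tM+2) \le \sum_{B} \frac{|\{f \in \cL_p : B(f) = B\}|}{|\cL_p|}.
\]

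For each fixed $B$ I would bound the ratio by an entropy argument. The constraint $B(f) = B$ forces $f \ge 2M+2$ on $B$ and $f \in [M+1, 2M+1]$ on $\outbound(B) \sub A$ (lower since $\outbound(B) \sub A$, upper since those vertices lie outside $B$). Relative to the naive Lipschitz freedom of ``$2M+1$ values per vertex,'' this restriction on $\outbound(B)$ should yield savings of order $|\outbound(B)|/M$ in the exponent, in analogy with the $(1 - 1/(M+1))^{|\outbound(A)|}$ factor of \cite{peled2013lipschitz}. To obtain this quantitatively while avoiding the $(2M+1)^{|B|}$ loss on $B$, I would apply Shearer's Lemma (Lemma~\ref{lem:Sh}) to a uniform $f \in_R \cL_p$ with a cover by closed neighborhoods $\{v\} \cup N(v)$, and then condition on $f$ near $\outbound(B)$, so that the Lipschitz constraint makes the conditional entropy of each $f(v)$ for $v \in B$ much smaller than $\log(2M+1)$; this is improvement~(b) of the proof overview.

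To sum over $B$ without paying a prohibitive $\exp_2(|B|\log d)$ enumeration cost, I would not enumerate $B$ directly but pass to a $\psi$-approximating pair $(S, F)$ from Lemma~\ref{lem:container:dre}, following the strategy of \cite{li2023number}. Choosing $\psi \asymp M$ (permissible since \eqref{eq.Mld} forces $M\log d \ll d$) makes the container cost $\exp_2(O(|N(B)|\log(d)/\psi))$ smaller than the entropy saving of order $|\outbound(B)|/M$; the first term in the container bound is controlled by the $d^{3/2}/(\lambda \log d)$ bound on $M$ in~\eqref{eq.Mld}. Crucially, the container approximation has to be \emph{integrated} into the Shearer computation: the reconstruction of $f$ from $(S, F)$ and the Shearer conditioning happen inside a single coupled entropy estimate, not as two independent union bounds.

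To finish, $Y^+ = X$ and $B \supseteq Y$ give $\outbound(B) \supseteq X \sm B$, and the vertex-expansion bound $|\outbound(B)| \ge 5|B|$ from Proposition~\ref{prop.expan.nonbi} (valid because $B \sub A^\circ$ and $|A| \le (2\lambda/d)n \le n/2$) then shows $|\outbound(B)| \ge |X|/5$ in all cases, so the net entropy savings exceed $|X|/(5M)$ after summing over $B$, yielding \eqref{ML.ineq}. The main obstacle is the third step: the Shearer partition must be chosen compatibly with the approximator $(S, F)$, so that conditioning on $(S, F)$ inside Shearer's Lemma simultaneously (i) preserves the per-vertex savings on $\outbound(B)$ well enough to beat the container enumeration cost and (ii) tames the a priori unbounded range of $f$ on $B$ (where $f \ge 2M+2$ has no fixed upper bound) via Lipschitz continuity from the determined values on $\outbound(B)$. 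This delicate balance, which adapts and substantially extends the framework of~\cite{li2023number} to the multi-level flaw structure present here, is the heart of the technical argument.
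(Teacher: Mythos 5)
Your high-level strategy matches the paper closely: cover the event by $\psi$-approximating pairs from Lemma~\ref{lem:container:dre}, bound the probability of each piece by a Shearer-based entropy estimate integrated with the approximating pair, and sum the pieces using the container enumeration bound. You also correctly identify why passing to the $4$-linked set $B(f)$ (rather than $A(f)$) is advantageous, and your derivation of $|\outbound(B)| \gtrsim |B(w_0,t-1)|$ is sound.

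However, there is a concrete error in the quantitative balance at the heart of the argument: the choice $\psi \asymp M$ does not make the container cost beat the entropy savings. With $\psi \asymp M$, the second term in the container bound from Lemma~\ref{lem:container:dre} is $\frac{\log d}{\psi} \asymp \frac{\log d}{M}$ per unit of $g=|N(B)|$, while the entropy savings from the boundary constraint is only of order $\frac{1}{M}$ per unit of $g$ (one factor $\log\bigl(1-\frac{1}{M+1}\bigr) \approx -\frac{1}{M}$ per edge from $N(B)\cap\outbound(S)$ into $\overline{S}$, scaled by $\frac{1}{d}$ from Shearer). Since $\log d \gg 1$, the container cost strictly exceeds the savings, and the sum over $(S,F)$ blows up. The paper resolves this by taking $\psi = \frac{d^{3/2}}{20\lambda}$, which is of order at least $M\log d$ under \eqref{eq.Mld}; then $\frac{\log d}{\psi} = \frac{20\lambda\log d}{d^{3/2}} \lesssim \frac{1}{M}$ (with a constant made small by choosing $c$ in \eqref{eq.Mld}), so the container cost is dominated by the savings. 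The condition you cite ($M\log d \ll d$, ensuring $\psi \le d/2$) is necessary but does not address this balance.

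Two further points are underspecified in a way that matters. First, the paper introduces a second layer of conditioning, $\cL_{p,p'}$, fixing boundary values $p'$ outside $S^+$; this is what makes the benchmark lower bound $|\cL_{p,p'}| \geq \prod_{v\in S^+}(u_v - M)$ (Proposition~\ref{prop:benchmark}) available and lets the Shearer computation be carried out relative to a fixed boundary. You never state how you lower-bound the denominator $|\cL_p|$, which is essential for the ratio estimate. Second, the Shearer cover you propose (closed neighborhoods $\{v\}\cup N(v)$) is not the one the paper uses: the paper's cover distinguishes $N^-_v\cap S$ from $N^-_v\cap\outbound(S)$ and uses multiplicity weights $d^-_v + d_{\overline{S^+}}(v)$ on singletons, with the partial order of Proposition~\ref{prop.ordering}; this structure is what makes the conditioning on $\ext\bff(N^-_v)$ and on $B(\bff)$ yield the needed per-edge savings on the boundary while keeping the $a$ priori unbounded values on $B$ under control. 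Your sketch gestures at these issues but does not supply the mechanism.
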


The rest of this section is devoted to the proof of Lemma~\ref{lem.nonbip}.

\subsection{Proof of Lemma~\ref{lem.nonbip}}\label{sec:lem.nonbip}
From now on, we work with $\cL_p$ for a fixed $A \sub V(G)$ and $p:V(G) \setminus A \ra \mathbb Z$. We further partition the probability space $\mathcal{L}_p$ using $B(f)$: for $g\in\mathbb{Z}^+$, let
\[\mathcal{B}(g) := \{ B(f) : f \in \cL_p, |N(B(f))| = g \},\]
and note that $\mathcal{B}(g)\subseteq \cH_4(w_0, g)$ (see the definition in~\eqref{def:container}).
By the law of total probability, we could write the left side of \eqref{ML.ineq} as
\[
\pr(f(w_0)\ge tM+2) = \sum_{g\in\mathbb{Z}^+}\sum_{B\in\mathcal{B}(g)} \pr(f(w_0)\ge tM+2,~B(f)=B),
\]
and then we could bound the probabilities on the right side in terms of $g$. However, as is typical in these kinds of problems, it is more effective to cover the events on right side by a much smaller number of events that we can still effectively bound. This is the purpose of the container method, \Cref{lem:container:dre}.

To this end, set
\beq{eq.psi}
\psi = \frac{d^{3/2}}{20\lambda} ,
\enq
and note that $1 \leq \psi \stackrel{\eqref{lambdalb1}}{\le} d/2$. Apply \Cref{lem:container:dre} with $k=4$ and $w_0$ playing the role of $v$ to obtain a family $\mathcal{U}(g) := \mathcal{U}_4(w_0, g) \subseteq 2^{V(G)} \times 2^{V(G)}$ satisfying
\beq{eq.containercost}
|\mathcal{U}(g)| \leq \exp\left(O\left(\frac{\log(4\lambda/\sqrt{d})\log d}{d} + \frac{\log d}{\psi}\right)g\right) \stackrel{\eqref{lambdalb1}, \eqref{eq.psi}}{=} \exp_2\left(O\left( \frac{\lambda\log d}{d^{3/2}} \right)g\right)
\enq
such that every $B \in \mathcal{B}(g)$ has a $\psi$-approximating pair $(S,F)$ in $\mathcal{U}(g)$.

Note that, by Proposition~\ref{obs.BinA}, we may assume that for every $(S,F) \in \mathcal{U}(g)$,
\beq{S+A}S \sub A^\circ, \text{ or equivalently, } S^+ \sub A ,\enq
since otherwise, we may simply replace each $S$ with $S \cap A^\circ$, because this replacement still satisfies the definition of $\psi$-approximating pair. We make a simple observation for future reference:
\beq{Obs.S+linked} \text{$S$ and $S^+$ are 4-linked.}
\enq
To see this for $S^+$, note that by the definition of $(S, F)$ (see~\eqref{approx1} and~\eqref{approx2}), for every $v \in S^+$,
\[\dist_G(v,B(f)) \leq \dist_G(v,F)+1 \leq \dist_G(v,S)+2 \leq 3.\]
Given that $B(f) \sub S^+$ and $B(f)$ is $4$-linked, we conclude that $S^+$ is also $4$-linked. 
Similarly, $S$ is $4$-linked.

Instead of partitioning the probability space $\mathcal{L}_p$ directly using $B(f)$, we cover $\cL_p$ using $\psi$-approximating pairs as follows:
\[ \pr(f(w_0)\ge tM+2) \le \sum_{g\in\mathbb{Z}^+}\sum_{(S, F)\in\mathcal{U}(g)} \pr(f(w_0)\ge tM+2,~\text{$(S, F)$ is a $\psi$-approx. pair of $B(f)$}).\]
Fix $g \in \mathbb Z^+$ and $(S,F) \in \mathcal{U}(g)$. 
We once again use conditional probability to establish another boundary condition and eliminate extraneous information. Fix $p':V(G) \setminus S^+ \to \mathbb{Z}$ that agrees with $p$ on $V(G) \setminus A$ (this is well-defined by \eqref{S+A}). Given $p$ and $p'$, we define the following two key events:
\[\cL_{p,p'}:=\{f \in \cL_p: \text{$f$ agrees with $p'$ on $V(G) \setminus S^+$}\};\]
\[\cL_{p, p'}(g,S,F):=\{f \in \cL_{p,p'}: \text{$B(f)\in \mathcal{B}(g)$, and $(S, F)$ is a $\psi$-approx. pair of $B(f)$}\}.\]
We aim to establish the following uniform upper bound: for any $p$, $g$, $S$, $F$, and $p'$ such that $\cL_{p,p'}$ and $\cL_{p, p'}(g,S,F)$ are nonempty, we have
\beq{pp'.bd0} \frac{|\cL_{p, p'}(g,S,F)|}{|\cL_{p,p'}|} \le \exp_2\left(-\frac{g}{3M}\right).\enq

\begin{proof}[Derivation of \Cref{lem.nonbip} from \eqref{pp'.bd0}]
Observe that for every $M$-Lipschitz $f\in \cL$ with $f(w_0) \ge tM+2$, it follows that $B(f)$ contains a ball of radius $t-2$ about $w_0$, that is, $B(w_0,t-2) \subseteq B(f)$,
so $|B(f)^+| \ge |B(w_0,t-1)|$. Note that, by applying \Cref{prop.expan.nonbi} to $N(B(f))$ (with the assumption that $\gl\le d/5$ and the fact that $|N(B(f))|\le |A(f)|\le \frac{2\gl}{d}n$), we have $|N(B(f))| \ge 3|B(f)|,$ so
\[|N(B(f))| \ge |B(f)^+|-|B(f)|\ge |B(f)^+|-|N(B(f))|/3,\]
and thus $|N(B(f))| \ge \frac{3}{4}|B(f)^+|  \ge \frac{3}{4}|B(w_0,t-1)|$.
Therefore, for $f \in_R \cL_p$, we have
\[
\begin{split}
\pr(f(w_0)&\ge tM+2) \le \sum_{g \ge \frac{3}{4} |B(w_0, t-1)|}\sum_{(S, F)\in\mathcal{U}(g)} \pr(\text{$(S, F)$ is a $\psi$-approx. pair of $B(f) \in \mathcal{B}(g)$})
\\
&= \sum_{g \ge \frac{3}{4} |B(w_0, t-1)|}\sum_{(S, F)\in\mathcal{U}(g)} \sum_{p'}\pr(\text{$(S, F)$ is a $\psi$-approx. pair of $B(f) \in \mathcal{B}(g)$} \mid f\in \cL_{p, p'})\cdot \pr(f\in \cL_{p, p'})
\\
&
=\sum_{g \ge \frac{3}{4} |B(w_0, t-1)|}\sum_{(S,F)\in \cU(g)}\sum_{p'}\frac{|\mathcal L_{p,p'}(g,S,F)|}{|\cL_{p,p'}|}\frac{|\cL_{p,p'}|}{|\cL_{p}|} \\
&\stackrel{\eqref{pp'.bd0}}{\le} 
\sum_{g \ge \frac{3}{4}|B(w_0, t-1)|}\sum_{(S,F)\in \cU(g)} \exp_2\left(-\frac{g}{3M}\right)\\
& \stackrel{\eqref{eq.containercost}}{\le} \sum_{g\geq \frac{3}{4}|B(w_0,t-1)|} \exp_2\left( O\left( \frac{\lambda\log d}{d^{3/2}} \right)g - \frac{g}{3M} \right)\\
&\stackrel{(\dagger)}{\le} \sum_{g \geq \frac{3}{4}|B(w_0,t-1)|} \exp_2\left( -\frac{g}{3.1M} \right) \leq \exp_2\left( - \frac{|B(w_0,t-1)|}{5M} \right),\end{split}\]
where $(\dagger)$ uses the assumption~\eqref{eq.Mld} that $M \le c \frac{d^{3/2}}{\lambda\log d}$, and we can choose $c$ so that $1/c$ dominates the universal constant in \eqref{eq.containercost}.
\end{proof}

\begin{proof}[Proof of \eqref{pp'.bd0}] 
We first establish a lower bound on $|\cL_{p,p'}|$, which will later serve as a `benchmark' for comparing our more involved upper bound on $|\cL_{p,p'}(g,S,F)|$. 
We will need the following simple observation for both bounds: for every $f \in \cL_{p,p'}$ and $v \in \inbound(S^+)$, 
\beq{fv.triv.obs} f(v) \leq M + \min_{w \in N(v) \setminus S^+} p'(w) .\enq

\begin{prop}\label{prop:benchmark} For $v \in S^+,$ set
\beq{def.uv} u_v:=\begin{cases}\min\left\{ 2M+1, M + \min_{w \in N(v) \setminus S^+} p'(w)\right\} & \text{ if } v \in \overline \partial(S^+)\\
2M+1 & \text{ if } v \in (S^+)^\circ.\end{cases}\enq
If $h: V(G) \to \Z$ agrees with $p'$ on $V(G) \setminus S^+$ and satisfies $h(v) \in [M+1,u_v]$ for $v \in S^+$, then $h \in \cL_{p,p'}$.
\end{prop}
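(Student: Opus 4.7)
The plan is to verify each defining property of $\cL_{p,p'}$ for the candidate $h$: (i) $h$ agrees with $p'$ on $V(G)\setminus S^+$ (immediate from the hypothesis), (ii) $h$ agrees with $p$ on $V(G)\setminus A$ (which follows from (i), since $V(G)\setminus A\subseteq V(G)\setminus S^+$ by \eqref{S+A} and $p'$ agrees with $p$ on $V(G)\setminus A$), (iii) $h$ is $M$-Lipschitz on $G$, (iv) $A(h)=A$, and (v) $h\in\Lip_0^*(G;M)$. I assume $\cL_{p,p'}$ is non-empty (else the claim is vacuous) and fix $f_0\in\cL_{p,p'}$ as a reference; in the context where the proposition is applied, $(S,F)$ is a $\psi$-approximating pair of $B(f_0)$, which I will exploit below.

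For (iii), I split the edges of $G$ by endpoint location into three cases. Edges with both endpoints in $S^+$ are handled immediately since $h\in[M+1,2M+1]$ there. Edges with both endpoints in $V(G)\setminus S^+$ inherit the Lipschitz property from $f_0$, as $h=p'=f_0$ on that set. For a crossing edge $\{v,w\}$ with $v\in\inbound(S^+)$ and $w\in N(v)\setminus S^+$, the upper direction $h(v)-p'(w)\leq M$ is immediate from $h(v)\leq u_v\leq M+p'(w)$, where the second inequality comes from taking $w$ in the minimum defining $u_v$. The lower direction $p'(w)-h(v)\leq M$ reduces, via $h(v)\geq M+1$, to proving the uniform bound $p'(w)\leq 2M+1$, which I argue in two subcases. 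If $w\notin A$, then having $f_0(w)\geq M+1$ would $2$-link $w$ (via its neighbor in $S^+\subseteq A$) into $A(f_0)=A$, a contradiction; hence $p'(w)=f_0(w)\leq M$. If $w\in A\setminus S^+$, I exploit the $\psi$-approximating pair structure: any $s\in N(w)\cap S$ has at least $d-\psi$ neighbors in $F\subseteq N(B(f_0))$, which places $s$ within $G$-distance $2$ of $B(f_0)$ and thus $w$ within $G$-distance $4$ of $B(f_0)$. Then if $f_0(w)\geq 2M+2$, both $w$ and some vertex of $B(f_0)$ lie in $\{u:f_0(u)\geq 2M+2\}$ at $G^4$-distance $1$, so they belong to the same $4$-linked component, forcing $w\in B(f_0)\subseteq S$ --- a contradiction --- and hence $p'(w)\leq 2M+1$.

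For (iv), I note $A\subseteq\{v:h(v)\geq M+1\}$ since $h\geq M+1$ on $S^+$ by construction and $h=p'=f_0\geq M+1$ on $A\setminus S^+$; conversely, every $y\in V(G)\setminus A$ adjacent (in $G^2$) to $A$ lies in $V(G)\setminus S^+$ and satisfies $h(y)=p'(y)=f_0(y)\leq M$, else $y$ would $2$-link into $A(f_0)=A$, so $A$ is precisely the $2$-linked component of $\{v:h(v)\geq M+1\}$ containing $w_0$. For (v), since $h=f_0$ on $V(G)\setminus S^+$ and both take values in $[M+1,2M+1]$ on $S^+\subseteq A$, the flaw sets $\{v:h(v)\notin[0,M]\}$ and $\{v:f_0(v)\notin[0,M]\}$ coincide, so the bound $\leq(2\lambda/d)n$ transfers from $f_0\in\Lip_0^*(G;M)$. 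The hardest step is the bound $p'(w)\leq 2M+1$ for crossing-edge endpoints in the case $w\in A\setminus S^+$, which is where the container machinery (the $\psi$-approximating pair conditions) is genuinely exploited in proving this proposition.
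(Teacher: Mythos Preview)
Your proof is correct and follows essentially the same route as the paper's, with one useful addition: you explicitly verify $A(h)=A$ (your step (iv)), which the paper's own proof silently skips even though it is formally required for membership in $\cL_{p,p'}$.

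Two minor slips are worth flagging. In the case $w\in A\setminus S^+$ of step (iii), you take ``any $s\in N(w)\cap S$'', but this set may be empty: the crossing vertex $v$ lies in $\inbound(S^+)\subseteq \partial(S)$, not in $S$, so $w$ need not be adjacent to $S$. What you actually have is $\dist_G(w,S)\le 2$ via $w\to v\to s$ for some $s\in N(v)\cap S$; then your chain $s\to F\to B(f_0)$ gives $\dist_G(w,B(f_0))\le 4$ exactly as you claim (and this matches the paper's distance computation). In step (v), the assertion that $f_0$ takes values in $[M+1,2M+1]$ on $S^+$ is unjustified --- nothing stops $f_0$ from exceeding $2M+1$ there --- but only $f_0\ge M+1$ on $S^+\subseteq A=A(f_0)$ is needed, and that already yields your conclusion that the flaw sets $\{v:h(v)\notin[0,M]\}$ and $\{v:f_0(v)\notin[0,M]\}$ coincide.
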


\begin{proof}
Observe that $h$ satisfies  \eqref{def.phase} with $k=0$, because for every such function $h$, we have
\[
|v\in V(G)\setminus A: h(v)\in [0, M]| 
=|v\in V(G)\setminus A: p(v)\in [0, M]| \geq \left(1 - \frac{2\lambda}{d}\right)n\]
where the last inequality holds for any valid $p$ (that is, such that $\cL_{p, p'}\neq \emptyset$). 
Thus it suffices to verify that $h$ is an $M$-Lipschitz function, that is, $h$ satisfies $|h(v)-h(w)| \leq M$ for all $vw \in E(G)$. 

The cases where $ v, w \in S^+$ and $v, w \in V(G) \setminus S^+$ are immediate. For $v \in S^+$, $w \in V(G) \setminus S^+$, since $vw \in E(G)$, we must have $v \in \inbound(S^+)$ and $w \in \outbound(S^+)$. By the definition of $u_v$, we have that $h(v) \leq M+p'(w) = M+h(w)$. 
It remains to show that 
\beq{h.ub}p'(w) \leq 2M+1, ~\text{and thus }h(w)=p'(w)\leq M+h(v).\enq
This is indeed a property of (valid) $p'$, independent of the choice of $h$.
To prove it, take an arbitrary $f\in \cL_{p,p'}(g, S, F)$ (which we assume in non-empty), and note that $f(w)=p'(w)$.
Recall from~\eqref{approx1} that $F \subseteq N(B(f)) \subseteq N(S)$, and therefore $\dist_G(w,B(f)) \leq \dist_G(w,F)+1 \leq \dist_G(w,S)+2 \leq 4$. 
Then we must have $f(w)\leq 2M+1$; otherwise, $w$ would have been contained in $B(f)$, the 4-linked component of $\{v : f(v) \ge 2M+2\}$, which contradicts the fact that $w\notin S\supseteq B(f)$.
\end{proof}

Proposition~\ref{prop:benchmark} immediately yields the lower bound
\beq{eq.benchmark}
|\cL_{p,p'}| \geq \prod_{v \in S^+} (u_v-M) .
\enq

The most difficult and technical part of the proof is establishing an upper bound on $|\cL_{p, p'}(g, S, F)|$ using entropy.
Roughly speaking, we will apply Shearer's Lemma (\Cref{lem:Sh}) to a uniformly chosen $f \in \cL_{p,p'}(g,S,F)$ under a particular ordering of $S^+$ and then show that $|\cL_{p,p'}(g,S,F)|$ is significantly smaller than the `benchmark' in \eqref{eq.benchmark}.

\begin{prop}\label{prop.ordering}
There exists an ordering $<$ on $S^+$ that satisfies the following properties:
\begin{enumerate}[(i)]
\item the first vertex (that is, the smallest under $<$) is within distance $2$ from $V(G) \setminus S^+$;
\item every vertex of $S$ precedes every vertex of $\outbound(S)$ in $<$;
\item for every $v \in S^+$ besides the first under $<$, there exists a vertex preceding $v$ under $<$ at distance at most $4$ in $G$ from $v$.
\end{enumerate}
\end{prop}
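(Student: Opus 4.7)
The plan is to construct the ordering in two phases: first order $S$ via a breadth-first traversal of $G^4[S]$ rooted at a carefully chosen starting vertex, then append the vertices of $\outbound(S)$ in any order. The three conditions then essentially fall out for free, provided the starting vertex is chosen correctly.

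First, I need to find a vertex $v^\star \in S$ with $\dist_G(v^\star, V(G)\setminus S^+) \leq 2$. Since $S$ is non-empty and $S^+\subseteq A$ with $|A|\leq \frac{2\lambda}{d}n<n$, we have $\emptyset \neq S^+ \subsetneq V(G)$, and since $G$ is connected there must exist a vertex $u\in \inbound(S^+)$, i.e., some $u\in S^+$ adjacent to a vertex outside $S^+$. If $u\in S$, set $v^\star:=u$; then $\dist_G(v^\star, V(G)\setminus S^+)=1$. Otherwise $u\in \outbound(S)$, and by the definition of $\outbound(S)$ the vertex $u$ also has a neighbor $v^\star\in S$. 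Since $u$ is adjacent to both $v^\star$ and some vertex in $V(G)\setminus S^+$, we obtain $\dist_G(v^\star, V(G)\setminus S^+)\leq 2$. Either way, condition (i) will be satisfied as long as $v^\star$ is placed first.

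Next, using that $S$ is 4-linked (by observation \eqref{Obs.S+linked}), the graph $G^4[S]$ is connected, so a standard BFS from $v^\star$ inside $G^4[S]$ produces an ordering of $S$ in which every vertex other than $v^\star$ has some predecessor at $G^4$-distance $1$, i.e., at $G$-distance at most $4$. I then append the vertices of $\outbound(S)$ to the end in arbitrary order; by definition every vertex of $\outbound(S)$ has a neighbor in $S$, and every vertex of $S$ precedes every vertex of $\outbound(S)$ in this combined ordering, so that neighbor is a predecessor at $G$-distance $1\leq 4$.

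Finally I verify the three conditions: (i) holds because the first vertex is $v^\star$, chosen above; (ii) holds by construction, since $S$ was ordered before $\outbound(S)$; (iii) holds by the BFS property on $S$ and the remark about $\outbound(S)$ in the previous paragraph. There is no real obstacle here — the argument is essentially bookkeeping — and the only mildly non-obvious step is the case analysis used to locate the starting vertex $v^\star \in S$ close to $V(G)\setminus S^+$, which exploits that every vertex of $\outbound(S)$ is at graph-distance $1$ from $S$.
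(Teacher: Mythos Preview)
Your proposal is correct and follows essentially the same approach as the paper: choose a starting vertex $v^\star\in S$ within distance $2$ of $V(G)\setminus S^+$, order $S$ by BFS in $G^4[S]$ (using that $S$ is $4$-linked), then append $\outbound(S)$ arbitrarily. Your case analysis locating $v^\star$ is exactly the justification the paper leaves implicit when it says ``which must exist because $V(G)\setminus S^+\neq\emptyset$.''
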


\begin{proof}
    Pick a vertex $v_0$ in $S$ that is of distance 2 from $V(G) \setminus S^+$, which must exist because $V(G) \setminus S^+ \neq \emptyset$. Set $v_0$ to be the first vertex under $<$. Note that by \eqref{Obs.S+linked}, $G^4[S]$ is connected. We order vertices in $S$ in increasing order according to their distance from $u_0$ in $G^4[S]$, and then put all of $\outbound(S)$ after them, breaking ties arbitrarily, to create $<$.
    \end{proof}

Let $N^-_v := \{w \in N(v) : w < v\}$ and $d^-_v := |N^-_v|$.
For brevity, we write $d_{\overline{S^+}}(v):=d_{V(G)\setminus S^+}(v)$, that is, the number of edges from $v$ to $V(G) \setminus {S^+}$. 
For any $W \subseteq S^+$ and $v \in S^+$,
\[
\text{$v<W$ means for all $w \in W$, we have $v<w$}.
\]
For a function $f\in\cL_{p,p'}$, and a set $X\subseteq V(G)$,
\[\text{$f_X$ denotes the restriction of $f$ to $X$, and $f(X):=\{f(v): v \in X\}$.}\]
For a set $Y$ of integers, we use $\ext Y$ to denote $\{\min Y, \max Y\}$, that is, the minimum and maximum elements of $Y$. 

Observe that one copy of $N^-_v \cap S$, one copy of $N_v^-\cap \partial(S)$ and $d^-_v+d_{\overline{S^+}}(v)$ copies of $\{v\}$ for all $v \in S^+$ form a $d$-fold cover of $S^+$, in the sense that every vertex of $S^+$ is in precisely $d$ of these sets. 
Let $\bff$ be uniformly chosen from $\cL_{p, p'}(g,S,F)$. (We use the bold $\bff$ now to remind the reader that $\bff$ is random.)
By applying Shearer's Lemma (\Cref{lem:Sh}) with the ordering $<$ from \Cref{prop.ordering} and the above $d$-fold cover, we obtain that
\[\begin{split}
\log|&\cL_{p, p'}(g,S,F)| = H(\bff) = H(\bff_{S^+}) \\
\leq~& \frac{1}{d} \sum_{v \in S^+} \left[\left(d^-_v+d_{\overline{S^+}}(v)\right) H\left(\bff_v\big|\bff_{\{u: ~u<v\}}\right) + H\left(\bff_{N^-_v\cap S}\big|\bff_{\{u:~u<N^-_v\cap S\}}\right) + H\left(\bff_{N^-_v\cap \partial S}\big|\bff_{\{u:~u<N^-_v \cap \partial S\}}\right) \right]; \nonumber
\end{split}\]
we note that in the above expression, we interpret $\bff_\emptyset$ as the empty function, and hence $H(\bff_\emptyset) = 0$ and $H(X | \bff_\emptyset) = H(X)$; furthermore, $\{u:u<\emptyset\}=V(G)$, so $H(X | \bff_{\{u : u<\emptyset\}}) = H(X | \bff) = 0$ for any random variable $X = X(\bff)$.

The above expression is equal to 
\[\begin{split}
& \frac{1}{d} \sum_{v \in S^+} \left[ d^-_v H\left(\bff_v\big|\bff_{\{u:~u<v\}}\right) + H\left(\bff_{N^-_v \cap S}\big|\bff_{\{u:~u<N^-_v \cap S\}}\right) + H\left(\bff_{N^-_v \cap \partial S}\big|\bff_{\{u:~u<N^-_v \cap \partial S\}}\right) \right]\\
& \hspace{10cm}+ \sum_{v\in \outbound(S)} \frac{d_{\overline{S^+}}(v)}{d} H\left(\bff_v\big|\bff_{\{u:~u<v\}}\right)
\end{split}\]
\begin{align}
=&\frac{1}{d}\sum_{v \in S} \left[ d^-_v H\left(\bff_v\big|\bff_{\{u:~u<v\}}\right) + H\left(\bff_{N^-_v}\big|\bff_{\{u:~u<N^-_v\}}\right) \right] \label{entro:term1}\\
&\hspace{2cm}+ \frac{1}{d}\sum_{v \in \partial(S)} \left[ d^-_v H\left(\bff_v\big|\bff_{\{u:~u<v\}}\right) + H\left(\bff_{N^-_v \cap S}\big|\bff_{\{u:~u<N^-_v \cap S\}}\right) + H\left(\bff_{N^-_v \cap \partial S}\big|\bff_{\{u:~u<N^-_v \cap \partial S\}}\right)\right]\label{entro:term2}\\ 
&\hspace{10.5cm}+\frac{1}{d}\sum_{v\in \outbound(S)} {d_{\overline{S^+}}(v)} H\left(\bff_v\big|\bff_{\{u:~u<v\}}\right)\label{entro:term3} ,
\end{align}
where we use the facts that for every $v \in S$, $d_{\overline{S^+}}(v)=0$ and $N_v^- \subseteq S$, since every vertex of $S$ comes before every vertex of $\outbound(S)$ under $<$. 

First, applying Property (7) in \Cref{entropyprop} with $Y=\ext \bff(N^-_v)$, we obtain that~\eqref{entro:term1} is at most $1/d$ times
\[
\begin{split}
&~\sum_{v \in S} \left[ d^-_v H\left(\bff_v\big|\bff_{\{u:~u<v\}}\right) + H\left(\bff_{N^-_v}\big|\ext \bff(N^-_v)\right) \right] + \sum_{v \in S} H\left(\ext \bff(N^-_v) \big| \bff_{\{u:u<N^-_v\}}\right)\\
\le~&\sum_{v \in S}\sum_{w \in N^-_v}\left[H(\bff_v \big| \ext \bff(N_v^-)) + H(\bff_w \big| \ext \bff(N_v^-)) \right]+ \sum_{v \in S} H\left(\ext \bff(N^-_v) \big| \bff_{\{u:u<N^-_v\}}\right),
    \end{split}
    \]
where the inequality above follows from Properties (4) and (5) in \Cref{entropyprop}.

To derive an upper bound on~\eqref{entro:term2} and~\eqref{entro:term3}, we use the following key observation:
\[\text{ $B(f)$ is fully determined by $f_S$} .\]
The second line~\eqref{entro:term2}, again using Property (7) in \Cref{entropyprop} with $Y=\ext\bff(N^-_v)$ or $Y=(B(\bff), \ext \bff(N^-_v))$, is at most $1/d$ times
\[\begin{split} 
& \sum_{v \in \partial(S)} \left[ d^-_v H\left(\bff_v\big|\bff_{\{u:~u<v\}}\right) + H\left(\bff_{N^-_v \cap S}\big|\ext\bff(N^-_v)\right) + H\left(\bff_{N^-_v \cap \partial S}\big|B(\bff), \ext \bff(N^-_v)\right)\right]\\
&\hspace{2cm} + \sum_{v \in \partial(S)} H\left(\ext\bff(N^-_v) \big| \bff_{\{u:~u<N^-_v \cap S\}}\right) + \sum_{v \in \partial(S)} H\left(B(\bff), \ext \bff(N^-_v)\big|\bff_{\{u:~u<N^-_v \cap \partial S\}}\right)\\
\stackrel{(\dagger)}{\le}~& \sum_{v \in \outbound(S)} \sum_{w \in N^-_v \cap S} \left[ H\left(\bff_v\big|\ext \bff(N^-_v)\right) + H\left(\bff_w\big|\ext \bff(N^-_v)\right) \right]\\
& \hspace{2cm} 
+ \sum_{v \in \outbound(S)} \sum_{w \in N^-_v \cap \partial(S)} \left[H\left(\bff_v\big|B(\bff),\ext \bff(N^-_v)\right)+ H\left(\bff_w\big|B(\bff),\ext \bff(N^-_v)\right)\right]\\
& \hspace{4cm}  + \sum_{v \in \partial(S)} H\left(\ext \bff(N^-_v) \big| \bff_{\{u:u<N^-_v\cap S\}}\right) + \sum_{v \in \partial(S)} H\left(\ext \bff(N^-_v) \big| \bff_{\{u:u<N^-_v\cap \partial S\}}\right)\\
\le~& \sum_{v \in \outbound(S)} \sum_{w \in N^-_v \cap S} \left[ H\left(\bff_v\big|\ext \bff(N^-_v)\right) + H\left(\bff_w\big|\ext \bff(N^-_v)\right) \right]\\
& \hspace{2cm} 
+ \sum_{v \in \outbound(S)} \sum_{w \in N^-_v \cap \partial(S)} \left[H\left(\bff_v\big|B(\bff),\ext \bff(N^-_v)\right)+ H\left(\bff_w\big|B(\bff),\ext \bff(N^-_v)\right)\right]\\
& \hspace{10cm}  + 2 \sum_{v \in \partial(S)} H\left(\ext \bff(N^-_v) \big| \bff_{\{u:u<N^-_v\}}\right),\end{split}\]
where ($\dagger$) relies on the crucial fact that $B(\bff)$ is determined by $\bff_{\{u: u<\outbound(S)\}} = \bff_S$. This fact also tells us that the last term,~\eqref{entro:term3}, is at most $1/d$ times
\[\sum_{v\in \outbound(S)} {d_{\overline{S^+}}(v)} H\left(\bff_v\big|B(\bff)\right).\]
Combining the above inequalities, $\log|\cL_{p, p'}(g,S,F)|$ is at most $1/d$ times
\begin{multline}\label{eq.mainentropy}
\sum_{v \in S} \sum_{w \in N^-_v}  \left[ H\left(\bff_v \big|\ext \bff(N^-_v)\right) + H\left( \bff_w\big|\ext \bff(N^-_v)\right) \right]
+ \sum_{v \in \outbound(S)} \sum_{w \in N^-_v \cap S} \left[ H\left(\bff_v\big|\ext \bff(N^-_v)\right) + H\left(\bff_w\big|\ext \bff(N^-_v)\right) \right]\\
+ \sum_{v \in \outbound(S)} \sum_{w \in N^-_v \cap \partial(S)} \left[ H\left(\bff_v \big|B(\bff),\ext \bff(N^-_v)\right) + H\left(\bff_w \big|B(\bff),\ext \bff(N^-_v)\right)\right]
+ 2 \sum_{v \in S^+} H\left(\ext \bff(N^-_v) \big|\bff_{\{u:~u<N^-_v\}}\right) \\
+ \sum_{v\in \partial(S)} {d_{\overline{S^+}}(v)} H\left(\bff_v\big|B(\bff)\right).
\end{multline}
We bound each of the five sums in~\eqref{eq.mainentropy} individually. To give the reader some foresight, our aim is to bound~\eqref{eq.mainentropy} relative to~\eqref{eq.benchmark}: each of the $H(f_v | \cdot)$ and $H(f_w | \cdot)$ terms we aim to bound by $\log(u_v - M)$ and $\log(u_w - M)$, where $u_v$ is defined by~\eqref{def.uv}. Of course, if we only proved this bound, then we would prove exactly the benchmark~\eqref{eq.benchmark} (plus a bit from the fourth sum). The point is that we may obtain stronger bounds when conditioning on $B(f)$, as in the third and fifth sum. We tackle these five sums in order, as the first sum provides a good warm-up for the second sum, which provides a good warm-up for the third sum.

\medskip

\nin (i) \underline{First sum of~\eqref{eq.mainentropy}.} For an $M$-Lipschitz function $f$, we have that for every vertex $v$, $\max f(N^-_v)-M \leq f(v) \leq \min f(N^-_v)+M$. 
Hence, for any $v \in S$, $w \in N_v^-$, and $a \leq b$ such that $\ext\bff(N_v^-) = \{a,b\}$ with positive probability, we have
\[ H\left(\bff_v\big|\ext \bff(N^-_v)=\{a, b\}\right) \leq \log\left[ (a+M) - (b-M) + 1 \right]\]
and 
\[ H\left(\bff_w\big|\ext \bff(N^-_v)=\{a, b\}\right) \leq \log\left(b - a + 1 \right) .\]
Observe that for any integers $a \leq b$, we have that
\[ [(a+M)-(b-M)+1](b-a+1) = [2(M+1)-(b-a+1)](b-a+1) \leq (M+1)^2 .\]
Thus, 
\[H\left(\bff_v\big|\ext \bff(N^-_v)=\{a, b\}\right)+H\left(\bff_w\big|\ext \bff(N^-_v)=\{a, b\}\right) \le \log (M+1)^2.\]

Therefore, using the definition of conditional entropy (see~\eqref{conEntro}), the first sum of \eqref{eq.mainentropy} is at most
\beq{eq.sum1}\begin{split} &\sum_{v \in S} \sum_{w \in N^-_v} \sum_{a, b} \pr(\ext \bff(N_v^-)=\{a, b\})\left[ H\left(\bff_v\big|\ext \bff(N^-_v)=\{a, b\}\right)+H\left(\bff_w\big|\ext \bff(N^-_v)=\{a, b\}\right) \right]\\ &\leq~  \sum_{v \in S} \sum_{w \in N^-_v} \log\left(M+1\right)^2 \stackrel{(\star)}{=}  \sum_{v \in S} \sum_{w \in N^-_v} \log\left[(u_v-M)(u_w-M) \right] ,
\end{split}\enq
where in $(\star)$ we recall that for $v \in S \subseteq (S^+)^{\circ}$, $u_v = 2M+1$ (see \eqref{def.uv}), and any $w\in N^-_v$ remains in $S$ by our choice of the ordering $<$ (see Proposition~\ref{prop.ordering}).

\medskip

\nin (ii) \underline{Second sum of~\eqref{eq.mainentropy}.} For ease of notation, for $v \in \partial(S), w \in N^-_v\cap S$, and $a \le b$, let
\[ T(v,w;a,b) := H\left(\bff_v\big|\ext \bff(N^-_v)=\{a,b\}\right) + H\left(\bff_w\big|\ext \bff(N^-_v)=\{a,b\}\right). \]

\begin{prop}\label{obs.f.ub.uv}
    Let $f \in \cL_{p,p'}(g,S,F)$. For every $v \in \partial (S)$, 
    \beq{obs.fv.uv}f(v) \le u_v.\enq
\end{prop}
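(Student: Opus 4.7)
The plan is to verify separately the two upper bounds whose minimum defines $u_v$ in~\eqref{def.uv}: the universal bound $f(v)\le 2M+1$, and (when $v\in\overline\partial(S^+)$) the sharper bound $f(v)\le M+\min_{w\in N(v)\setminus S^+}p'(w)$. The second is immediate: for any $w\in N(v)\setminus S^+$, the condition $f\in\cL_{p,p'}$ gives $f(w)=p'(w)$, and the $M$-Lipschitz property yields $f(v)\le f(w)+M=p'(w)+M$; minimizing over such $w$ gives what we need.

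The substantive step is the first bound, which I would prove by contradiction. Suppose $f(v)\ge 2M+2$. Since $(S,F)$ is a $\psi$-approximating pair of $B(f)$, condition~\eqref{approx1} gives $B(f)\subseteq S$, and since $v\in\partial(S)=N(S)\setminus S$, we have $v\notin B(f)$. Because $B(f)$ is the $4$-linked component of $\{u:f(u)\ge 2M+2\}$ containing $w_0$, it suffices to show $\dist_G(v,B(f))\le 4$ in order to force $v\in B(f)$ and obtain a contradiction. To bound the distance: pick any neighbor $u\in S$ of $v$; by~\eqref{approx2}, $d_{V(G)\setminus F}(u)\le \psi\le d/2<d$, so $u$ has a neighbor $u'\in F$; and by~\eqref{approx1}, $F\subseteq N(B(f))$, so $u'$ in turn has a neighbor in $B(f)$. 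Chaining $v\to u\to u'\to B(f)$ yields $\dist_G(v,B(f))\le 3\le 4$, as needed.

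Putting the two inequalities together and invoking~\eqref{def.uv} gives $f(v)\le u_v$ in all cases. The only mildly subtle ingredient is the distance-$3$ bound, but this is a routine unwinding of the $\psi$-approximating pair conditions together with the assumption $\psi\le d/2$, so I do not anticipate any real obstacle. The proposition's role in what follows is presumably to let one replace the naive ``conditional'' entropy bound $\log(2M+1)$ at vertices $v\in\overline\partial(S^+)$ with the strictly smaller $\log(u_v-M)$, matching the benchmark~\eqref{eq.benchmark}.
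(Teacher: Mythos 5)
Your proof is correct and matches the paper's: the paper also derives $f(v)\le 2M+1$ by showing $\dist_G(v,B(f))\le 4$ via the $\psi$-approximating-pair conditions (citing its argument for~\eqref{h.ub}), and then applies~\eqref{fv.triv.obs} to handle the extra term in $u_v$ when $v\in\inbound(S^+)$. If anything, your explicit chain $v\to u\in S\to u'\in F\to B(f)$ spells out the distance bound a bit more transparently than the paper's compressed inequality.
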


\begin{proof}
Recall the definition of $u_v$ in \eqref{def.uv}.
For all $v \in \partial(S)$, it follows identically from the proof of \eqref{h.ub} that $f(v) \le 2M+1$.
Then for $v \in \partial(S) \cap (S^+)^\circ$, \eqref{obs.fv.uv} immedialty follows from the defintion of $u_v$. For $v \in \partial(S) \cap \overline\partial(S^+),$ \eqref{obs.fv.uv}  follows from \eqref{fv.triv.obs}.
\end{proof}

\begin{lem}\label{prop.T.bd0} For any $v \in \partial(S)$, $w \in N^-_v \cap S$, and valid $a\le b$ (meaning $\ext\bff(N_v^-) = \{a,b\}$ with positive probability), we have
\[ T(v,w;a,b) \le \log \left[(u_v-M)(u_w-M)\right].\]
\end{lem}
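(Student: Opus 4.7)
My plan is to bound $T(v,w;a,b)$ using the standard inequality $H(X \mid E) \leq \log |\{\text{values of }X\text{ given }E\}|$ (part (1) of Lemma~\ref{entropyprop}) applied separately to $\bff_v$ and $\bff_w$, and then show that the product of the possible-value counts is at most $(u_v-M)(u_w-M)$. Since $w \in S \subseteq (S^+)^\circ$, we have $u_w = 2M+1$ by definition~\eqref{def.uv}, so the target is $n_v \cdot n_w \le (u_v-M)(M+1)$, where $n_v$, $n_w$ are the image sizes of $\bff_v$, $\bff_w$ under the conditioning.

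First, I assemble the constraints. Since $v, w \in S^+ \subseteq A$ (by~\eqref{S+A}) and $A = \{u : f(u) \ge M+1\}$ (on its relevant component), both $\bff(v)$ and $\bff(w)$ are $\ge M+1$; in particular $a \ge M+1$ because every vertex in $N_v^-$ lies in $S^+ \subseteq A$. From Proposition~\ref{obs.f.ub.uv}, $\bff(v) \le u_v$. The Lipschitz condition with the vertices of $N_v^-$ gives $\bff(v) \in [b-M, a+M]$, and $\bff(w) \in [a,b]$ trivially. Combining, $n_v \leq \min(a+M, u_v) - \max(b-M, M+1) + 1$ and $n_w = b - a + 1$.

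The proof proceeds by case analysis on whether $b \le 2M+1$ and whether $a+M \le u_v$. When $b \le 2M+1$, we have $n_w = b-a+1 \le M+1$ (since $a \ge M+1$) and trivially $n_v \le u_v - M$, so $n_v n_w \le (u_v-M)(M+1)$ directly. When $b > 2M+1$ and $a+M \le u_v$, the Lipschitz bound gives $n_v \le a + 2M + 1 - b$, so $n_v + n_w = 2M+2$; AM–GM yields $n_v n_w \le (M+1)^2$, and the sub-case hypothesis together with $a \ge M+1$ forces $u_v \ge 2M+1$, so $(M+1)^2 \le (u_v-M)(M+1)$.

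The remaining case, $b > 2M+1$ and $a+M > u_v$, is the technical heart and splits further. If $u_v \ge 2M$, the hypothesis gives $a \ge u_v - M + 1$, so $n_w \le b - u_v + M$. Setting $t = b - u_v \ge 1$ (using $b > 2M+1 \ge u_v$), we compute $n_v n_w \le (M+1-t)(t+M) = M(M+1) - t(t-1) \le M(M+1) \le (u_v - M)(M+1)$. If $M+2 \le u_v < 2M$, use instead $n_w \le b-M$ (from $a \ge M+1$); writing $t' = b - M \ge M+2$, we bound $n_v n_w \le (u_v+1-t')t'$. Since $u_v < 2M$, this parabola peaks below $M+1$, so it is decreasing on $[M+2, u_v]$, giving maximum value $(u_v-M-1)(M+2)$. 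The desired inequality $(u_v-M-1)(M+2) \le (u_v-M)(M+1)$ is equivalent to $u_v - M \le M+2$, which holds since $u_v \le 2M+1$. The case $u_v = M+1$ makes the range of $b$ empty. The main obstacle is precisely this last sub-case: the \emph{continuous} maximum of $(u_v+M+1-b)(b-a+1)$ over real $a,b$ can exceed $(u_v-M)(M+1)$, so one must exploit that the integrality constraints $a \ge M+1$ and $b \ge 2M+2$ push the maximizer to a boundary where the bound holds.
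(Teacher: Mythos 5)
Your proposal is correct and uses the same fundamental idea as the paper: bound each conditional entropy by the logarithm of the size of the support (Lemma~\ref{entropyprop}(1)), then show the product of the two support sizes is at most $(u_v-M)(u_w-M)=(u_v-M)(M+1)$. The divergence is in the bound chosen for $|\mathrm{supp}(\bff_w)|$. You use the tighter $n_w\le b-a+1$ (from $f(w)\in[a,b]$), which keeps the variable $a$ in play and forces the multi-level case analysis you describe, including a sub-case ($a+M\le u_v$) that is in fact degenerate, since $a\ge M+1$ and $u_v\le 2M+1$ force $a+M\ge u_v$ always, so it only occurs when $a=M+1$ and $u_v=2M+1$. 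The paper instead discards the dependence on $a$ and uses only $M+1\le f(w)\le b$, i.e.\ $n_w\le b-M$. This looser bound decouples $a$ entirely, so that after the single split $b\lessgtr 2M+1$, the case $b\ge 2M+1$ reduces to the one-line algebraic identity
\[
(u_v-b+M+1)(b-M)=(u_v-M)(M+1)+(u_v-b)(b-2M-1),
\]
where the last term is nonpositive because $u_v\le 2M+1\le b$. Your route is valid and reaches the same conclusion, but the extra tightness in $n_w$ buys nothing for this lemma and costs several sub-cases; it is worth noticing that dropping the $a$-dependence is what makes the paper's argument short.
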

\begin{proof}
For $f \in \cL_{p,p'}(g,S,F)$ with $\ext f(N_v^- \cap S) = \{a,b\}$, we have $M+1 \leq f(w) \leq b$ and $\max\{b-M,M+1\} \leq f(v) \leq u_v$, where the $M+1$ lower bounds follow from $S^+ \subseteq A$ (see~\eqref{S+A}). If $b \leq 2M+1$, this tells us
\[ T(v,w;a,b) \le \log(u_v-M)+\log(b-M) \stackrel{(\star)}{\le} \log\left[(u_v-M)(u_w-M)\right] ,\]
where in $(\star)$ we use that $u_w=2M+1$ since $w \in S \sub (S^+)^\circ$ (see definition~\eqref{def.uv}). If $b \geq 2M+1$, we have
\[\begin{split}
T(v,w;a,b)& \le \log(u_v-(b-M)+1)+\log(b-M)\\ 
&= \log\left[ (u_v-M)(M+1) + (u_v-b)(b-2M-1) \right] \\ 
&\stackrel{(\star)}{\le} \log\left[(u_v-M)(u_w-M)\right],\end{split}\]
where in $(\star)$ we use that $u_w=2M+1$ and $(u_v-b)(b-2M-1) \le 0$ since $u_v \leq 2M+1$ (see~\eqref{def.uv}) and $b \geq 2M+1$.
\end{proof}

By \Cref{prop.T.bd0}, the second sum of \eqref{eq.mainentropy} is 
\beq{eq.sum2.0} 
\sum_{v \in \outbound(S)} \sum_{w \in N^-_v \cap S} \sum_{a\le b} T(v,w;a,b)\cdot \pr(\ext \bff(N_v^-)=\{a,b\}) \le  \sum_{v \in \partial(S)} \sum_{w \in N_v^- \cap S}\log[(u_v-M)(u_w-M)].
\enq

\medskip

\nin (iii) \underline{Third sum of~\eqref{eq.mainentropy}.} For $v \in \partial(S)$, $w \in N_v^-\cap \partial(S)$, $a \le b$ and a set $B\in \mathcal{B}(g)$, we define
\[ T(v,w;a,b, B) := H\left(\bff_v\big|B(\bff)=B,\ext \bff(N^-_v)=\{a,b\}\right)+ H\left(\bff_w\big| B(\bff)=B, \ext \bff(N^-_v)=\{a,b\}\right).\]

Note that given $B(f)=B$, we can determine whether $v \in N(B(f)).$
Let $\mathbf{1}_{v \in N(B(f))}$ be the indicator function for whether $v \in N(B(f))$. 
\begin{prop}\label{prop.fv.case2.bd}  Let $f \in \cL_{p,p'}(g,S,F)$. For any $v \in \partial(S)$, given $\ext f(N^-_v)=\{a,b\}$ with $a \leq b$, we have
\beq{fv.case2.bd} \max\{b-M, M+1+\mathbf{1}_{v \in N(B(f))}\} \le f(v) \le \min\{a+M,u_v\}.\enq
\end{prop}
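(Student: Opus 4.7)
The proposition decomposes into four elementary bounds on $f(v)$, and I would handle each one separately. The plan is to observe that the two inequalities involving $a$ and $b$ are pure consequences of the $M$-Lipschitz property, while the remaining two inequalities are consequences of the ``structural'' facts already established about $A(f)$, $B(f)$, $S$, and $u_v$.

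For the upper bound, I would first note that $f(v) \le a+M$: since $\ext f(N^-_v) = \{a,b\}$, there exists some $w \in N^-_v$ with $f(w)=a$, and the $M$-Lipschitz property yields $f(v) \le f(w)+M = a+M$. The other half, $f(v) \le u_v$, is immediate from Proposition~\ref{obs.f.ub.uv}, since $v \in \partial(S) \subseteq S^+$.

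For the lower bound, the inequality $f(v) \ge b-M$ is symmetric: pick $w \in N^-_v$ with $f(w)=b$ and apply the $M$-Lipschitz property. To see $f(v) \ge M+1$, recall from~\eqref{S+A} that $S^+ \subseteq A$, so $v \in \partial(S) \subseteq S^+ \subseteq A = A(f)$; since $A(f)$ is a subset of $\{u : f(u) \ge M+1\}$ (being one of its $2$-linked components), we conclude $f(v) \ge M+1$. Finally, for the indicator-strengthening, I would argue that if $v \in N(B(f))$, then $v$ has a neighbor $u \in B(f)$ satisfying $f(u) \ge 2M+2$ by the definition of $B(f)$, and therefore the $M$-Lipschitz property forces $f(v) \ge f(u) - M \ge M+2$. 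Combining the three lower-bound ingredients gives $f(v) \ge \max\{b-M, M+1+\mathbf{1}_{v \in N(B(f))}\}$.

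I don't expect any genuine obstacle here: every ingredient is either the Lipschitz condition applied to an appropriate edge, or a direct invocation of the definitions of $A(f)$, $B(f)$, and $u_v$, together with the containment $S^+ \subseteq A$ from~\eqref{S+A} and the upper bound from Proposition~\ref{obs.f.ub.uv}. The only mildly delicate point is checking that the ``$+1$'' in the $M+1+\mathbf{1}_{v \in N(B(f))}$ lower bound really comes from the gap between the thresholds $M+1$ (defining $A(f)$) and $2M+2$ (defining $B(f)$): a neighbor with value $\ge 2M+2$ forces $v$ to have value $\ge M+2$, which is exactly one more than the generic lower bound $M+1$ guaranteed by membership in $A(f)$.
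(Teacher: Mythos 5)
Your proof is correct and follows essentially the same approach as the paper's; the only difference is that you spell out the elementary Lipschitz bounds $b-M \le f(v) \le a+M$ (obtained from neighbors in $N^-_v$ attaining the extremes $a$ and $b$), which the paper treats as immediate and does not mention explicitly, focusing instead only on the $u_v$ upper bound via Proposition~\ref{obs.f.ub.uv} and the $M+1+\mathbf{1}_{v\in N(B(f))}$ lower bound via $\partial(S) \subseteq A$ and the definition of $B(f)$.
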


\begin{proof}
The upper bound is immediate from \Cref{obs.f.ub.uv}.
    For the lower bound, $f(v) \ge M+1$ because $\partial (S) \sub A$ (see \eqref{S+A}). Furthermore, if $v \in N(B(f))$, then $f(v) \ge (2M+2)-M=M+2.$ 
\end{proof}    

\begin{lem}\label{prop.T.bd} For any $v \in \partial(S)$, $w \in N^-_v \cap \partial(S)$, $a \le b$ and $B\in \mathcal{B}(g)$ such that $\ext\bff(N_v^-) = \{a,b\}$ and $B(\bff) = B$ with positive probability, we have
\[ T(v,w;a,b,B) \le \log \left[(u_v-M)(u_w-M)\right]-\frac{\mathbf{1}_{v \in N(B)}+\mathbf{1}_{w \in N(B)}}{M}.\]
\end{lem}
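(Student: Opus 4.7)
I would start by applying the trivial bound $H(X\mid Y)\le\log|\mathrm{Image}(X\mid Y)|$ (Lemma~\ref{entropyprop}(1)) to both entropy terms in $T(v,w;a,b,B)$, reducing the task to bounding the product of the support sizes $r_v:=|\mathrm{Image}(\bff_v\mid B(\bff)=B,\ \ext\bff(N_v^-)=\{a,b\})|$ and $r_w$, defined analogously. Writing $\mathbf{1}_v$ for $\mathbf{1}_{v\in N(B)}$ (and similarly for $w$), it suffices to prove
\[
r_v\cdot r_w\ \le\ (u_v-M)(u_w-M)\cdot 2^{-(\mathbf{1}_v+\mathbf{1}_w)/M}.
\]

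The new ingredient beyond Lemma~\ref{prop.T.bd0} comes from conditioning on $B(\bff)=B$: any $z\in N(B)$ has a neighbor in $B$ of value at least $2M+2$, so by Lipschitzness $f(z)\ge M+2$. Combining this with the Lipschitz constraints imposed by $\ext\bff(N_v^-)=\{a,b\}$, the ground-state bound $f(z)\ge M+1$ on vertices of $\partial(S)\subseteq A$, and the upper bound $f(z)\le u_z$ from Proposition~\ref{obs.f.ub.uv}, I would derive
\[
r_v\le u_v-\max\{b-M,\ M+1+\mathbf{1}_v\}+1,\qquad r_w\le\min\{b,u_w\}-M-\mathbf{1}_w,
\]
where the lower bound for $r_w$ uses $a\ge M+1$ (since $N_v^-\subseteq A$).

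The numerical workhorse is the inequality $M/(M+1)\le 2^{-1/M}$ for integers $M\ge 1$ (equivalent to $\log_2(1+1/M)\ge 1/M$), which, together with $u_v,u_w\in[M+1,2M+1]$, yields $(u-M-1)/(u-M)\le 2^{-1/M}$ for every $u\ge M+2$. I would then split on $b$. In the case $b\le 2M+1$, the Lipschitz lower bound $b-M\le M+1$ is dominated by $M+1+\mathbf{1}_v$, so $r_v\le u_v-M-\mathbf{1}_v$ and $r_w\le u_w-M-\mathbf{1}_w$; the desired product bound is immediate by applying the numerical inequality once to each factor. In the case $b\ge 2M+2$, the Lipschitz bound $b-M\ge M+2$ dominates and forces $r_v\le u_v-(b-M)+1\le u_v-M-1$ \emph{automatically}, independent of $\mathbf{1}_v$; combined with $r_w\le u_w-M-\mathbf{1}_w$, this gives $r_vr_w\le (u_v-M-1)(u_w-M-\mathbf{1}_w)\le (u_v-M)(u_w-M)\cdot 2^{-(1+\mathbf{1}_w)/M}$, which dominates the target bound since $\mathbf{1}_v\le 1$.

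The main obstacle is the case $b\ge 2M+2$, where conditioning on $B(\bff)=B$ gives no direct reduction on $r_v$ (the Lipschitz constraint from $b$ is already stronger than the indicator-strengthened ground-state bound), so the $2^{-\mathbf{1}_v/M}$ savings have to be extracted implicitly from the observation that $b-M\ge M+2$ already forces $r_v\le u_v-M-1$. Once this observation is in place, the rest of the proof is routine arithmetic using the identity $M/(M+1)\le 2^{-1/M}$.
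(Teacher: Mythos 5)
Your proposal is correct and uses the same core ingredients as the paper: bound each conditional entropy by the logarithm of the number of attainable values (via the ground-state floor $f\ge M+1$, the $B$-conditioning floor $f\ge M+2$ when in $N(B)$, and the ceiling $f\le u_\cdot$), then convert the unit saving into a $2^{-1/M}$ factor via $\log_2\bigl(1-\tfrac{1}{M+1}\bigr)\le -1/M$. Where you diverge is in keeping the tight lower bound $f(v)\ge\max\{b-M,\,M+1+\mathbf{1}_v\}$ and then splitting on $b$. The paper avoids the case split entirely: it simply drops the $b-M$ term from the max and uses $f(v)\ge M+1+\mathbf{1}_v$ (valid for all $b$), yielding $r_v\le u_v-M-\mathbf{1}_v$ uniformly, and likewise $r_w\le u_w-M-\mathbf{1}_w$, after which the arithmetic identical to your $b\le 2M+1$ case finishes in one stroke. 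So the ``obstacle'' you identify in the $b\ge 2M+2$ case is self-inflicted by your choice of the tighter floor; the $\mathbf{1}_v$-dependent lower bound on $f(v)$ remains available regardless of $b$, and using the weaker one is exactly what you want here since only the final product bound matters. Your argument does repair this and is correct, just at the cost of an extra case analysis that the paper's route makes unnecessary.
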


\begin{proof} 
As both $v,w$ are in $\partial(S)$, we apply the bound from \eqref{fv.case2.bd} to obtain \[H\left(\bff_v \big|B(\bff)=B, \ext \bff(N^-_v)=\{a,b\}\right)\le \log(u_v-M-\mathbf{1}_{v \in N(B)}),\]
and
\[H\left(\bff_w\big|B(\bff)=B, \ext \bff(N^-_v)=\{a,b\}\right)\le \log(u_w-M-\mathbf{1}_{w \in N(B)}).\]
Similar to \Cref{prop.T.bd0}, we have
 \[\begin{split} T(v,w;a,b, B) 
&
\le \left[\log (u_v-M-\mathbf{1}_{v\in N(B)}) +\log(u_w-M-\mathbf{1}_{w\in N(B)}) \right]\\
&= \log\left[ (u_v-M)(u_w-M) \right] + \left[\log\left(1-\frac{\mathbf{1}_{v\in N(B)}}{u_v-M}\right)\left(1-\frac{\mathbf{1}_{w\in N(B)}}{u_w-M}\right)\right] \\
&\stackrel{(\star)}{\leq} \log\left[ (u_v-M)(u_w-M) \right] + \left[\mathbf{1}_{v\in N(B)} \log\left(1-\frac{1}{M+1}\right)+\mathbf{1}_{w\in N(B)} \log\left(1-\frac{1}{M+1}\right)\right]\\
& \leq \log\left[ (u_v-M)(u_w-M) \right] - \frac{\mathbf{1}_{v\in N(B)}+\mathbf{1}_{w\in N(B)}}{M}, \end{split}\]
where in $(\star)$ we use that $u_v \leq 2M+1$ for all vertices in $S^+$, and in the final inequality we used the elementary inequality $\log_2(1-\frac{1}{M+1}) \leq -1/M$ for $M \geq 1$ (we do not optimize the constant factors).\end{proof}

By \Cref{prop.T.bd} and again the definition of conditional entropy (see~\eqref{conEntro}), the third sum of \eqref{eq.mainentropy} is 
\beq{eq.sum2} 
\begin{split} 
&\sum_{v \in \outbound(S)} \sum_{w \in N^-_v \cap \partial S} \sum_{a \leq b} \sum_B T(v,w;a,b, B)\cdot \pr(B(\bff)=B, \ext \bff(N_v^-)=\{a,b\})\\
\le & \sum_{v \in \partial(S)} \sum_{w \in N_v^- \cap \partial S}\log[(u_v-M)(u_w-M)]\\
&\hspace{2cm}-\sum_{v \in \partial(S)}\sum_{w \in N_v^-\cap \partial(S)} \sum_B \frac{\mathbf{1}_{v \in N(B)}+\mathbf{1}_{w \in N(B)}}{M} \sum_{a \leq b} \pr(B(\bff)=B, \ext \bff(N_v^-)=\{a,b\})\\
= & \sum_{v \in \partial(S)} \sum_{w \in N_v^-\cap \partial S}\log[(u_v-M)(u_w-M)]-\sum_{B} \pr(B(\bff)=B)\frac{e(N(B) \cap \partial(S), \partial(S))}{M},\\
\end{split}
\enq
where the last equality holds because the edges $vw$ with both endpoints in $N(B) \cap \outbound(S)$ are counted twice by both $\mathbf{1}_{v \in N(B)}+\mathbf{1}_{w \in N(B)}$ and $e(N(B) \cap \partial(S), \partial(S))$, while the edges $vw$ with only one endpoint in $N(B) \cap \outbound(S)$ are counted just once by both $\mathbf{1}_{v \in N(B)}+\mathbf{1}_{w \in N(B)}$ and $e(N(B) \cap \partial(S), \partial(S))$.

\medskip

\nin (iv) \underline{Fourth sum of~\eqref{eq.mainentropy}.} Let $v \in S^+$. By the definition of $<$ (see \Cref{prop.ordering}), if $N^-_v$ does not contain the first vertex of $<$,  there exists $w<N^-_v$ which is at most distance $4$ from some $u \in N^-_v$, and therefore $w$ is at most distance $6$ from every vertex of $N^-_v$. Thus for any $f\in \cL_{p, p'}(g,S,F)$,
\[ f(w)-6M \leq \min f(N^-_v) \leq \max f(N^-_v) \leq f(w)+6M ,\]
and hence
\[ H\left(\ext \bff(N^-_v) \big|\bff_{\{u:~u<N^-_v\}}\right) \leq 2\log\left( 12M+1 \right) .\]
If $N^-_v$ contains the first vertex of $<$, which by \Cref{prop.ordering} is at most distance 2 from some $w\in V(G) \setminus S^+$, then this $w$ is at most distance $4$ from every vertex of $N^-_v$. 
Note that $\bff(w) = p'(w)$ is fixed. We may apply the same argument as the previous case and conclude that
\[ H\left(\ext \bff(N^-_v) \big|\bff_{\{u:~u<N^-_v\}}\right) \leq 2\log\left( 8M+1 \right) .\]
In conclusion, the third sum of \eqref{eq.mainentropy} is
\beq{eq.sum3}
 2 \sum_{v \in S^+} H\left(\ext \bff(N^-_v) \big|\bff_{\{u:~u<N^-_v\}}\right) \leq {|S^+|} \cdot 4\log(12M+1).
\enq

\medskip

\nin (v) \underline{Fifth sum of~\eqref{eq.mainentropy}.} Recall from \Cref{obs.f.ub.uv}, \Cref{prop.fv.case2.bd}, and $S^+\subseteq A$ that for $v \in \outbound(S)$, we have $M+1+\mathbf{1}_{v \in N(B(f))} \leq f(v) \leq u_v$ for every $f \in \cL_{p,p'}(g,S,F)$. Thus the fourth sum of \eqref{eq.mainentropy} is
\beq{eq.sum4} 
\begin{split}
\sum_{v \in \outbound(S)} {d_{\overline{S^+}}(v)} H\left(\bff_v\big|B(\bff)\right)&=\sum_{v \in \outbound(S)} {d_{\overline{S^+}}(v)} \sum_{B} \mathbb{P}(B(\bff)=B) H\left(\bff_v\big|B(\bff)=B\right)
\\
&\leq \sum_{v \in \outbound(S)} {d_{\overline{S^+}}(v)} \log(u_v-M) + \sum_{B} \mathbb{P}(B(\bff)=B) \sum_{v\in\outbound(S)\cap N(B)}{d_{\overline{S^+}}(v)}\log\left(1 - \frac{1}{u_v-M}\right)\\
&\leq \sum_{v \in \outbound(S)} {d_{\overline{S^+}}(v)} \log(u_v-M) - \sum_{B} \mathbb{P}(B(\bff)=B) \frac{e(N(B)\cap\partial(S),\overline{S^+})}{M},
\end{split}
\enq
where in the last inequality we use that $u_v \leq 2M+1$ for all $v$ and $\log_2(1-\frac{1}{M+1}) \leq - 1/M$.

\medskip

Finally, putting \eqref{eq.sum1}, \eqref{eq.sum2.0} \eqref{eq.sum2}, \eqref{eq.sum3}, and \eqref{eq.sum4} into \eqref{eq.mainentropy}, we have that
\[\begin{split}
\log |\cL_{p,p'}(g,S,F)|& \leq \frac{1}{d}\left[\sum_{v\in S^+} \sum_{w \in N^-_v} \log\left[(u_v-M)(u_w-M) \right]
- \sum_B \mathbb{P}(B(\bff)=B) \frac{e(N(B)\cap\outbound(S),\outbound(S))}{M}\right.\\
&\quad\left.+ {|S^+|} \cdot 4\log(12M+1)+ \sum_{v \in \outbound(S)} {d_{\overline{S^+}}(v)} \log(u_v-M) - \sum_{B} \mathbb{P}(B(\bff)=B) \frac{e(N(B)\cap\outbound(S),\overline{S^+})}{M}\right] \\
& \stackrel{(\star)}{\le} \sum_{v \in S^+} \log(u_v-M) + \frac{1}{d} {|S^+|} \cdot 4\log(12M+1) - \frac{1}{d} \sum_B \mathbb{P}(B(\bff)=B) \frac{e(N(B)\cap\outbound(S),\overline{S})}{M},\end{split}\]
where $(\star)$ uses
\[\begin{split} &\sum_{v\in S^+} \sum_{w \in N^-_v} \log\left[(u_v-M)(u_w-M) \right]+\sum_{v \in \outbound(S)} {d_{\overline{S^+}}(v)} \log(u_v-M)\\
&=\sum_{v \in S^+}d_v^-\log(u_v-M)+\sum_{v \in S^+}\sum_{w \in N^-_v}\log(u_w-M)+ \sum_{v \in S^+} {d_{\overline{S^+}}(v)} \log(u_v-M)=d\sum_{v \in S^+} \log (u_v-M).\end{split}\]
Recall that $g = |N(B)| \leq \frac{2\lambda}{d}n$ and $\psi \leq d/10$ (see~\eqref{lambdalb1} and \eqref{eq.psi}), so by Proposition~\ref{prop:SFbound}, for $\lambda \leq d/5$, we have that
\beq{s.ub1} |S| \leq \left( \frac{\lambda}{d(1-g/n)-\psi} \right)^2 g \leq \left( \frac{\lambda}{d(1-2\lambda/d)-\psi} \right)^2 g \leq \left( \frac{2\lambda}{d} \right)^2 g \leq g/4 .\enq
Thus
\[ e(N(B)\cap\partial(S),\overline{S}) = d|N(B)\cap\partial(S)| - e(N(B)\cap\partial(S),S) \geq d|N(B)\cap\partial(S)| - d|S| \geq d(g-2|S|) \geq dg/2.\]
Also, we have
\[ |S^+|\le |S| + |F| + |N(S)\setminus F| \stackrel{\eqref{approx1},\eqref{approx2}}{\leq} g+(1+\psi)|S| \stackrel{\eqref{s.ub1}}{\leq} (1+(2\lambda/d)^2(1+\psi)) g \stackrel{\eqref{eq.psi}}{\le} (2+\lambda/(5d^{1/2})) g \stackrel{\eqref{lambdalb1}}{=} O(\lambda g/d^{1/2}) .\]
Summarizing the above, we obtain that
\[
\begin{split}
|\cL_{p,p'}(g,S,F)|
&\leq \prod_{v \in S^+} (u_v-M)\cdot\exp_2\left( O\left(\frac{\lambda g}{d^{3/2}}\right)\cdot4\log(12M+1) - \frac{g}{2M} \right) \\
&\leq \prod_{v \in S^+} (u_v-M)\cdot\exp_2\left( O\left(\frac{\lambda \log(d)}{d^{3/2}} g \right) - \frac{g}{2M} \right) \\
& \leq \prod_{v \in S^+} (u_v-M)\cdot \exp_2\left( -\frac{g}{3M} \right),
\end{split}\]
where the last inequality uses the assumption~\eqref{eq.Mld} that $M \le c \frac{d^{3/2}}{\lambda\log d}$, and we can choose $c$ so that $1/c$ dominates the implicit (universal) constant in $O\left(\frac{\gl \log (d)}{d^{3/2}}g\right)$.
This, together with the `benchmark'~\eqref{eq.benchmark}, leads to 
\[ \frac{|\cL_{p,p'}(g,S,F)|}{|\cL_{p,p'}|} \leq \exp_2\left( -\frac{g}{3M}\right),\]
which completes the proof of~\eqref{pp'.bd0}.
\end{proof}

\section{Proof of Theorem~\ref{thm.flat}}\label{sec.GS}
In this section, we derive \Cref{thm.flat} from \Cref{MT.nonbip}.
The core idea is to show that $\Lip^*_k(G;M)$ can be covered by at most $M+1$ translations of $\Lip_{v_0}(G;M)$.
This crucial proposition, combined with Bayes's rule, allows us to transfer our results from the fixed ground state model $\Lip^*_k(G;M)$ to the one-point boundary condition model $\Lip_{v_0}(G;M)$.

\begin{prop}\label{prop:gstobc}
For any $M \in \mathbb{Z}^+$, any $n$-vertex $d$-regular $\lambda$-expander $G$ with $\lambda \leq d/5$, and any $v_0 \in V(G)$, we have that for every $k \in \mathbb{Z}$,
\beq{Bayes} |\Lip^*_k(G;M)| \leq (M+1) |\Lip_{v_0}(G;M)| .\enq
\end{prop}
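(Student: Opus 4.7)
The plan is to construct a shift map $\Phi: \Lip^*_k(G;M) \to \Lip_{v_0}(G;M)$ and show each fiber has size at most $M+1$. Define $\Phi(f)(v) = f(v) - f(v_0)$ for $f \in \Lip^*_k(G;M)$ and $v \in V(G)$. Since $f$ is integer-valued and $M$-Lipschitz, so is $\Phi(f)$; and evidently $\Phi(f)(v_0) = 0$, so $\Phi(f) \in \Lip_{v_0}(G;M)$, i.e.\ $\Phi$ is well-defined.

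Next, I would bound the fiber $\Phi^{-1}(g)$ for a fixed $g \in \Lip_{v_0}(G;M)$. If $\Phi(f) = g$, then $f = g + c$ where $c = f(v_0) \in \mathbb{Z}$, so the fiber is indexed by
\[ C := \{ c \in \mathbb{Z} : g + c \in \Lip^*_k(G;M) \}. \]
By definition of $\Lip^*_k(G;M)$, for each $c \in C$ the interval $I_c := [k-c,\, k+M-c]$ of length $M+1$ satisfies
\[ \left|\{ v \in V(G) : g(v) \notin I_c \}\right| \leq \frac{2\lambda}{d}\, n. \]

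The key step is the following overlap argument. For any two elements $c_1, c_2 \in C$, a union bound on the complementary sets gives
\[ \left|\{ v \in V(G) : g(v) \in I_{c_1} \cap I_{c_2} \}\right| \geq n - 2 \cdot \frac{2\lambda}{d}\, n = \left(1 - \frac{4\lambda}{d}\right) n, \]
which is strictly positive since $\lambda \leq d/5$. In particular the integer intervals $I_{c_1}$ and $I_{c_2}$ have nonempty intersection, and because each has $M+1$ consecutive integers, their starting points differ by at most $M$, i.e.\ $|c_1 - c_2| \leq M$. Hence $C$ is contained in an interval of $M+1$ integers, so $|\Phi^{-1}(g)| = |C| \leq M+1$.

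I expect this overlap argument to be the crux; everything else is bookkeeping. Summing the fiber bound over $g \in \Lip_{v_0}(G;M)$ yields
\[ |\Lip^*_k(G;M)| = \sum_{g \in \Lip_{v_0}(G;M)} |\Phi^{-1}(g)| \leq (M+1)\, |\Lip_{v_0}(G;M)|, \]
which is exactly \eqref{Bayes}. Note the hypothesis $\lambda \leq d/5$ is used only to ensure $1 - 4\lambda/d > 0$ so that the two ground-state intervals must meet; any $\lambda < d/4$ would suffice, so there is a bit of slack here.
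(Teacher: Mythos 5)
Your proof is correct, and it takes a genuinely different and notably more direct route than the paper's. The paper proves this by an averaging argument: it writes $|\Lip^*_0(G;M)| = \frac{1}{L}\sum_{k=1}^L |\Lip^*_k(G;M)|$ using translational symmetry, introduces the ``minimum ground state index'' $\kappa(f)$, rearranges the double sum so that $\kappa(f)$ becomes the outer index, slices again according to the value $f(v_0)$, and finally sends $L \to \infty$ to kill the boundary-effect factor $\frac{L+2M+2nM}{L}$. You instead define the normalization map $\Phi(f) = f - f(v_0)$ from $\Lip^*_k(G;M)$ into $\Lip_{v_0}(G;M)$ and bound the size of each fiber directly, which avoids the asymptotic bookkeeping entirely. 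Both arguments hinge on the same combinatorial core: two ground-state intervals of a single Lipschitz function must overlap because $\lambda < d/4$ forces the ``in-ground-state'' vertex sets to have more than $n/2$ vertices each; the paper records this as the observation that $\Lip^*_k$ and $\Lip^*_{k'}$ are disjoint when $|k-k'|>M$, while you use it to show $|c_1-c_2|\le M$ for any two shifts $c_1,c_2$ in a given fiber. Your version is shorter and, I think, cleaner; the only minor thing worth adding is a one-line remark that the fiber parametrization $c \mapsto g+c$ is injective (so $|\Phi^{-1}(g)|=|C|$), which follows since $G$ has at least one vertex.
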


\begin{proof}
By symmetry, it suffices to prove it for $\Lip^*_0(G;M)$.
Recall that we assume $G$ is always connected; therefore, very loosely,
\beq{f.number.n} \text{if $f \in \Lip^*_k(G;M)$, then $k-nM \leq f(v_0) \leq k+M+nM$.}\enq
By Lemma~\ref{lem.Lips.GS}, $\Lip(G;M) = \bigcup_{k \in \mathbb{Z}} \Lip^*_k(G;M)$. For $f \in \Lip(G;M)$, let $\kappa(f)$ be the minimum $k$ such that $f \in \Lip^*_k(G;M)$. Observe that $\Lip^*_k(G;M)$ and $\Lip^*_{k'}(G;M)$ are disjoint if $|k-k'| > M$, since $\lambda < d/4$. Therefore given $f \in \Lip(G;M)$ and $\kappa:=\kappa(f)$,
\beq{obs.kappa}\{k:f \in \Lip^*_k(G;M)\} \sub [\kappa,\kappa+M]. \enq
Let $L \in \mathbb{Z}^+$. 
Using these observations and the translational symmetry of $\Lip(G;M)$ (i.e., for every $k$, there is a bijection between $\Lip^*_0(G;M)$ and $\Lip^*_k(G;M)$ defined by $f \mapsto f + k$), we have that
\begin{align*}
|\Lip^*_0(G;M)|
&= \frac{1}{L} \sum_{k=1}^L |\Lip^*_k(G;M)|
\stackrel{\eqref{obs.kappa}}{=} \frac{1}{L} \sum_{k=1}^L \sum_{k' = k-M}^k \left|\left\{ f \in \Lip^*_k(G;M) : \kappa(f) = k' \right\}\right| \\
&\leq \frac{1}{L} \sum_{k' = 1-M}^L \sum_{k=k'}^{k'+M} \left|\left\{ f \in \Lip^*_k(G;M) : \kappa(f) = k' \right\}\right| \\
&\leq \frac{1}{L} \sum_{k'=1-M}^L (M+1) \left|\left\{ f \in \Lip(G;M) : \kappa(f) = k' \right\}\right| \\
&\stackrel{\eqref{f.number.n}}{\leq} \frac{M+1}{L} \sum_{k'=1-M}^L \sum_{r=k'-nM}^{k'+M+nM} \left|\left\{ f \in \Lip(G;M) : \kappa(f) = k', f(v_0) = r \right\}\right| \\
&\leq \frac{M+1}{L} \sum_{r=1-M-nM}^{L+M+nM} \sum_{k'=r-M-nM}^{r+nM} \left|\left\{ f \in \Lip(G;M) : \kappa(f) = k', f(v_0) = r \right\}\right| \\
&\le \frac{M+1}{L} \sum_{r=1-M-nM}^{L+M+nM} \left|\left\{ f \in \Lip(G;M) : f(v_0) = r \right\}\right| \\
&= (M+1) \frac{L+2M+2nM}{L} |\Lip_{v_0}(G;M)| .
\end{align*}
Taking $L \to \infty$ finishes the proof.
\end{proof}

\begin{proof}[Proof of \Cref{thm.flat}]
Let $\bff$ be chosen from $\Lip_{v_0}(G;M)$ uniformly at random, and let $2M+2 \le L \le nM$ be an integer. 
Note that, since $G$ is connected,
\[\text{if $f \in \Lip_{v_0}(G;M)$, then $f \in \Lip^*_k(G;M)$ for some $-nM\leq  k \leq nM$.}\]
Then for any $v \in V(G)$,
\[\begin{split} \pr\left( \bff(v) \geq 2L \right) &=\frac{|\{f \in \Lip_{v_0}(G;M): f(v) \ge 2L\}|}{|\Lip_{v_0}(G;M)|}\leq \sum_{k=-nM}^{nM} \frac{\left|\left\{ f \in \Lip_{v_0}(G;M) : f \in \Lip^*_k(G;M), f(v) \geq 2L \right\}\right|}{|\Lip_{v_0}(G;M)|} \\
&\stackrel{\eqref{Bayes}}{\leq} (M+1) \sum_{k=-nM}^{nM} \frac{\left|\left\{ f \in \Lip^*_k(G;M) : f(v_0) = 0, f(v) \geq 2L \right\}\right|}{|\Lip^*_k(G;M)|}\\
&\leq (M+1) \left[\sum_{k=-nM}^L \frac{\left|\left\{ f \in \Lip^*_k(G;M) : f(v) \geq 2L \right\}\right|}{|\Lip^*_k(G;M)|} + \sum_{k=L}^{nM} \frac{\left|\left\{ f \in \Lip^*_k(G;M) : f(v_0) = 0 \right\}\right|}{|\Lip^*_k(G;M)|}\right] . \end{split}\]
Note that, by \Cref{MT.nonbip} and the translational symmetry of $\Lip(G;M)$,
\[\begin{split} 
\sum_{k=-nM}^L &\frac{\left|\left\{ f \in \Lip^*_k(G;M) : f(v) \geq 2L \right\}\right|}{|\Lip^*_k(G;M)|}
= \sum_{k=-nM}^L\frac{\left|\left\{ f \in \Lip^*_0(G;M) : f(v) \geq 2L-k \right\}\right|}{|\Lip^*_0(G;M)|} \\
&= \sum_{\ell = L}^{2L+nM} \frac{\left|\left\{ f \in \Lip^*_0(G;M) : f(v) \geq \ell \right\}\right|}{|\Lip^*_0(G;M)|} \le \sum_{\ell = L}^{2L+nM} \exp_2\left(-\frac{|B(v,\lfloor (\ell-2)/M\rfloor-1)|}{5M}\right).
\end{split}\]
Similarly, using the symmetry of $\Lip(G;M)$ that for every $k$, there is a bijection between $\Lip^*_k(G;M)$ and $\Lip^*_0(G;M)$ defined by $f \mapsto -f + k+M$,
\[\begin{split}\sum_{k=L}^{nM} &\frac{\left|\left\{ f \in \Lip^*_k(G;M) : f(v_0) = 0 \right\}\right|}{|\Lip^*_k(G;M)|} = \sum_{k=L}^{nM}\frac{\left|\left\{ f \in \Lip^*_0(G;M) : f(v_0) =k+M \right\}\right|}{|\Lip^*_0(G;M)|} \\
&\le \frac{\left|\left\{ f \in \Lip^*_0(G;M) : f(v_0) \geq L \right\}\right|}{|\Lip^*_0(G;M)|} \le \exp_2\left(-\frac{|B(v,\lfloor (L-2)/M\rfloor-1)|}{5M}\right).
\end{split}\]
To prove the bound on $R(\bff)$, set $L = M\ceil{ C'\left(\log\log n/\log(d/2\lambda)\right)+1} + 2$. By combining the above bounds and applying \Cref{prop.volume}, we obtain
\[\pr(\bff(v) \ge 2L)\le 2(M+1)\sum_{\ell = L}^{2L+nM} \exp_2\left(-\frac{|B(v,\lfloor (\ell-2)/M\rfloor-1)|}{5M}\right) \leq 4(M+1)M \exp_2\left( - \frac{(\log n)^{2C'}}{5M} \right) = n^{-\go(1)}\]
given $M \leq (\log n)^{C'}$. By symmetry, we also have $\pr(f(v) \le -2L)=n^{-\go(1)}$, and thus, by the union bound, we obtain that
\[ \pr\left( R(\bff) \geq M \left( C' \frac{\log\log n}{\log(d/2\lambda)} + 2 \right) + 2 \right) \leq n^{-\go(1)}. \]
To prove the bound on $\mathrm{Var}(\bff(v))$, note that $\mathbb{E} \bff(v) = 0$ by symmetry, so with $C=50\ceil{\log(M)/\log(d/2\lambda)}$, we have
\begin{align*}
&\mathrm{Var}(\bff(v)) = \mathbb{E} \bff(v)^2 = \sum_{L = 1}^{nM} L^2 \mathbb{P}(\bff(v) = L) \leq 2 \sum_{L=1}^{nM} L \mathbb{P}(\bff(v) \geq L) \leq 4 \sum_{L=0}^{\floor{nM/2}} 2L \mathbb{P}(\bff(v) \geq 2L) \\
&\leq O\left( C^2 M^2 \right) + \sum_{L=(C+1)M+2}^{nM} 2L \cdot 2(M+1) \sum_{\ell=L}^{2L+nM} \exp_2\left(-\frac{|B(v,\lfloor (\ell-2)/M\rfloor-1)|}{5M}\right) \\
&\leq O\left( C^2 M^2 \right) + 4M \left( (C+1)M+2 \right) \cdot 4(M+1)M \exp_2\left(-\frac{\min\{n/2,(d/2\lambda)^{2C}\}}{5M}\right) \\
&\leq O\left( C^2 M^2 \right)
\end{align*}
where we used \Cref{prop.volume} and $n$ is sufficiently large.
\end{proof}

\section{Final remarks}\label{sec:open}

In this paper, we show that random $M$-Lipschitz functions exhibit small fluctuations on even very weak regular expander graphs, as long as $M$ is not too large as a function of the degree of the graph. Unfortunately, this restriction on $M$ seems difficult to remove using our technique. As mentioned in the introduction, the study of $M$-Lipschitz functions forms a natural bridge between $\Z$-homomorphisms and $\mathbb{R}$-valued Lipschitz functions, and a natural question is whether $\mathbb{R}$-valued Lipschitz functions exhibit small fluctuations on expander graphs (and in what quantitative sense). More precisely, Peled, Samotij, and Yehudayoff~\cite{peled2013lipschitz} asked: do the extensions of Theorem~\ref{MT.nonbip} and Corollary~\ref{thm.flat} hold for $M$ going to infinity independently of $d$ and $n$? One may view the set of $\mathbb{R}$-valued Lipschitz functions (with a one-point boundary condition at $v_0$, say) as a polytope. A possibly related problem (if only in spirit) is the question of estimating/describing the ``metric polytope,'' the set of points $x \in [0,2]^n$ which satisfy the triangle inequality: for all distinct $i,j,k$, $x_i + x_j \geq x_k$. Hypergraph container methods and/or entropy methods were used on this problem; see \cite{Balogh2016, kozma2024does}.

As asked by Peled, Samotij, and Yehudayoff, it is also interesting how weak the condition for $\lambda$ in Theorem~\ref{thm.flat} may be taken --- that is, does \Cref{thm.flat} hold for even $\lambda = (1-\ep)d$? We get closer to answering this question with our condition of $\lambda \le d/5$. We did not try to optimize the upper bound on $\lambda$, but we believe it will be more fruitful to change the definition of expander from~\eqref{def.expander} to spectral-expanders, where all but the largest eigenvalue of the adjacency matrix of the graph are at most $\lambda$ in absolute value.

Bounds on the probability of large fluctuations of Lipschitz functions should be useful in their enumeration. Korsky, Saffat, and Aiylam~\cite{korsky2024lipschitz} discuss enumerating $M$-Lipschitz functions with $M \to \infty$ on two dimensional grids and random graphs. As random graphs are expanders, it is hopeful that more precise enumeration of $M$-Lipschitz functions of random graphs is possible.

Finally, as mentioned in the introduction, our proof techniques extend to $\Z$-homomorphisms with little modification. With suitable generalization, our techniques also extend to general graph homomorphisms to some extent. This will be discussed in \cite{KLP2}. 

\section*{Acknowledgement}

RAK is supported by NSF Graduate Research Fellowship Program Grant No.\ DGE 21-4675. JP is supported by NSF grant DMS-2324978 and a Sloan Fellowship. We thank Louigi Addario-Berry and Wojciech Samotij for a correction to the statement of Theorem~\ref{thm.flat}, and we thank Shayan Gharan for suggesting considering spectral expanders for $\lambda$ very close to $d$.

\end{document}